\documentclass{amsart}    
\usepackage{graphics}
\usepackage{amssymb}

\newcommand{\Q}{\mathbb{Q}}
\newcommand{\C}{\mathbb{C}}
\newcommand{\D}{\mathbb{D}}

\newcommand{\R}{\mathbb{R}}
\newcommand{\N}{\mathbb{N}}
\newcommand{\XTC}{\hat{\mathbb{C}}}

\newcommand{\dom}{\mbox{dom}}
\newcommand{\ran}{\mbox{ran}}

\newcommand{\normal}{n}

\newcommand{\Int}{\mbox{Int}}
\newcommand{\Ext}{\mbox{Ext}}

\newtheorem{theorem}{Theorem}[section]
\newtheorem{lemma}[theorem]{Lemma}

\theoremstyle{definition}
\newtheorem{definition}[theorem]{Definition}

\theoremstyle{theorem}
\newtheorem{corollary}[theorem]{Corollary}

\theoremstyle{theorem}

\theoremstyle{theorem}

\theoremstyle{theorem}

\theoremstyle{definition}

\theoremstyle{theorem}

\numberwithin{equation}{section}

\begin{document}

\title{Computing conformal maps onto circular domains}

\author{Valentin V. Andreev}
\address{Department of Mathematics\\
              Lamar University\\
              Beaumont, Texas 77710 USA}
 \email{valentin.andreev@lamar.edu}                          

\author{Dale Daniel}
\address{Department of Mathematics\\
              Lamar University\\
              Beaumont, Texas 77710 USA}
\email{dale.daniel@lamar.edu}

\author{Timothy H. McNicholl}
\address{Department of Mathematics\\
              Lamar University\\
              Beaumont, Texas 77710 USA}
\email{timothy.h.mcnicholl@gmail.com}

\begin{abstract}
We show that, given a non-degenerate, finitely connected domain $D$, its boundary, and the number of its boundary components, it is possible to compute a conformal mapping of $D$ onto a circular domain \emph{without} prior knowledge of the circular
domain.  We do so by computing a suitable bound on the error in the Koebe construction
(but, again, without knowing the circular domain in advance).  As a scientifically sound model of computation with continuous data, we use
Type-Two Effectivity\cite{Weihrauch.2000}.
\end{abstract}
\keywords{Computable analysis, constructive analysis, complex analysis, conformal mapping}
\subjclass[2010]{03F60, 30C20, 30C30, 30C85}

\maketitle

\section{Introduction}\label{sec:INTRO}

Let $\C$ denote the complex plane, and let $\XTC = \XTC \cup \{\infty\}$ denote the extended complex plane.  A \emph{domain} is an open and connected subset of $\XTC$.
A domain is \emph{$n$-connected} if its complement has $n$ connected components.  The boundaries of these components are referred to as the \emph{boundary components} of the domain.  
A domain is \emph{finitely connected} if it is $n$-connected for some $n$.  
A domain is \emph{circular} if its boundary components are circles.
Finally, a domain is \emph{non-degenerate} if each connected component of its complement   
contains at least two points.

The Riemann Mapping Theorem states
that every non-degenerate $1$-connected domain is conformally equivalent to 
the unit disk, $\D$.  Hence, there is a single \emph{canonical domain} to which all
non-degenerate $1$-connected domains are conformally equivalent.  With regards to the situation for $2$-connected domains and 
higher, it is not the case that all $n$-connected domains
are conformally equivalent when $n \geq 2$.  In fact, two annuli are conformally
equivalent only when the ratio of their inner to outer radii are the same.  Hence, much research has focused on proving 
existence of conformal mappings onto various kinds of canonical domains.  
See \cite{Nehari.1952} and \cite{Golusin.1969}.  Some existence proofs rely on extremal-value arguments and hence are not constructive.  At the same time, there are explicit formulae for the conformal
mappings from a multiply connected domain to a circularly slit disk
and a circularly slit annulus. Unlike the simply connected case, every
multiply connected domain can be identified by $3n-6$ parameters,
where $n$ is the connectivity of the domain. But even if we know one
type canonical domain to which a given domain is conformally
equivalent, \it we are still not wiser about the configuration of other
canonical domains to which the given domain is conformally
equivalent.\rm

Among the canonical domains are the \emph{circular domains} which are obtained by deleting one or more disjoint closed disks from the plane.  
These domains are the canonical domains in a number of
recent studies of the Schwarz-Christoffel formula for multiply
connected domains, nonlinear problems in mechanics, and in aircraft
engineering.  See, for example, \cite{Crowdy.2007} and \cite{DeLillo.Elcrat.Pfaltzgraff.2004}.
Jeong and Mityushev have recently obtained explicit formulae for the Green's function of a circular domain \cite{Jeong.Mityushev.2007}.

One key motivation for this paper is the following theorem was proven by P. Koebe in 1910 \cite{Koebe.1910}.

\begin{theorem}\label{thm:KOEBE}
Suppose $D$ is a non-degenerate and finitely connected domain that contains $\infty$ but not $0$.  Then, there is a unique circular domain $C_D$ for which there exists a unique conformal map $f_D$ of $D$ onto $C_D$ such that 
$f_D(z) = z + O(z^{-1})$.  
\end{theorem}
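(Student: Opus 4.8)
The plan is to establish existence by the classical Koebe iteration and uniqueness by a reflection argument. Throughout, since $\infty \in D$ while $0 \notin D$, the complement $\XTC \setminus D$ is a compact subset of $\C$ with exactly $n$ connected components $K_1, \dots, K_n$, each having more than one point by non-degeneracy; their boundaries are the boundary components $\Gamma_1, \dots, \Gamma_n$ of $D$.

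For existence I would iterate a single-component circularization. Given a compact connected set $K \subseteq \C$ with more than one point, the domain $\XTC \setminus K$ is simply connected and contains $\infty$, so the Riemann Mapping Theorem supplies a unique conformal map of $\XTC \setminus K$ onto the complement of a closed round disk normalized as $z + O(z^{-1})$ near $\infty$; call it the \emph{exterior map} of $K$. Enumerating the components cyclically, at stage $m$ I let $\phi_m$ be the exterior map of the $j$-th complementary component of the current domain, where $j \equiv m \pmod n$, and set $D_m = \phi_m(D_{m-1})$ with $D_0 = D$. Each $\phi_m$ replaces one boundary component by an exact circle while perturbing the others, and each is the identity plus $O(z^{-1})$ at $\infty$; hence so is every partial composition $f_m = \phi_m \circ \cdots \circ \phi_1$.

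Two facts then drive the argument. First, the normalization $z + O(z^{-1})$ makes $\{f_m\}$ a normal family, so a subsequence converges locally uniformly; a Hurwitz-type argument shows the limit $f_D$ is constant or univalent, and the normalization rules out the constant case. Second, and this is where the real work lies, one must show that the successive perturbations of the already-circularized components decay geometrically, so that in the limit every boundary component is a genuine circle and the full sequence converges. This geometric decay is the main obstacle: it is obtained from a quantitative Schwarz-lemma estimate bounding how far the exterior map of a nearly circular curve displaces the remaining components, with a contraction factor governed by the moduli of the annuli separating the $K_j$. Granting this, $f_D$ maps $D$ conformally onto a circular domain $C_D$ with $f_D(z) = z + O(z^{-1})$.

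For uniqueness, suppose $f$ and $g$ both map $D$ onto circular domains $C$ and $C'$ with the stated normalization. Then $h = g \circ f^{-1}$ is a conformal map of $C$ onto $C'$ with $h(w) = w + O(w^{-1})$ at $\infty$, and it suffices to prove $h$ is the identity. Since $h$ carries the boundary circles of $C$ to those of $C'$, the Schwarz reflection principle extends $h$ across each boundary circle, and iterating reflections in the circles of $C$ and $C'$ extends $h$ to the complement of the measure-zero limit set of the associated reflection group. The normalization forces $h(w) - w$ to be bounded near $\infty$; controlling its growth through the reflections shows the singularities along the limit set are removable, so $h(w) - w$ extends to a bounded entire function vanishing at $\infty$. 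By Liouville's theorem $h(w) = w$, whence $C = C'$ and $f = g$, giving uniqueness of both $C_D$ and $f_D$. See \cite{Golusin.1969} for the analytic details.
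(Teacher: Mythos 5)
The paper never proves this statement: it is Koebe's 1910 theorem, quoted as motivation, and the paper's own contribution is the \emph{computable} version (Theorems \ref{thm:KOEBE.COMP}, \ref{thm:n.CONNECTED}), with convergence of the iteration cited to Koebe and quantitative bounds to Gaier, Henrici, and Andreev--McNicholl. Your existence argument follows exactly the Koebe construction the paper recounts in Section \ref{sec:KOEBE}, so the route is the classical one; the problem is that it has a genuine gap at precisely the point you flag as ``where the real work lies.'' You \emph{grant} the geometric decay of the perturbations of the already-circularized components, but that decay estimate \emph{is} the theorem: it is not a routine Schwarz-lemma exercise, because the contraction factor depends on quantities (separation and moduli of the complementary components) that change at every step of the iteration, so one must show these are uniformly controlled along the whole infinite process. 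This is exactly what Koebe's 1912 convergence proof and the later error analyses of Gaier \cite{Gaier.1959}, Henrici \cite{Henrici.1986}, and Andreev--McNicholl \cite{Andreev.McNicholl.2011} supply, and what the present paper spends Sections \ref{sec:BOUND}--\ref{sec:HARMONIC} making effective. Without it, your normal-families argument yields only a subsequential limit that is univalent and normalized; you cannot conclude that the limit domain is circular, nor that the full sequence converges. So existence is not established by the proposal.

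The uniqueness half is the correct classical strategy (extend $h = g \circ f^{-1}$ by repeated Schwarz reflection in the boundary circles and apply a Liouville-type argument), but one step is wrong as written: having the limit set of the reflection group be of \emph{measure zero} does not make it removable for bounded analytic functions. A line segment has zero area, yet a branch of $\sqrt{(w-a)/(w-b)}$ is bounded and analytic off the segment $[a,b]$ and does not extend across it. The classical proof instead shows that the disks produced at the $m$-th generation of reflections have radii shrinking so rapidly that the limit set has zero length (so Painlev\'e's theorem applies), or argues directly with Cauchy integrals over the shrinking generation-$m$ circles to kill the extra terms in the Laurent expansion of $h(w)-w$. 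With that repair the argument does give $h = \mathrm{id}$, hence uniqueness of both $C_D$ and $f_D$; as it stands, though, both halves of your proof lean on an unproved (existence) or incorrectly justified (uniqueness) key estimate.
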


In the same paper, Paul Koebe put forth and investigated an iterative technique for approximating the unique conformal map $f_D$ of a non-degenerate $n$-connected domain $D$ with $\infty \in D$ and $0 \not \in D$ onto a circular domain $C_D$ of the form $z + O(z^{-1})$ \cite{Koebe.1910}.   This method
is now known as the \emph{Koebe Construction} and will be described in Section \ref{sec:KOEBE}.  
In 1959, Dieter Gaier calculated upper bounds on the error in this construction
which tend to zero as the iterations progress \cite{Gaier.1959}.  However, these
bounds use certain numbers associated with $C_D$.   Hence, to apply Gaier's bounds
for the sake of approximating $f_D$, one must first know a fair amount about the circular domain $C_D$.  In \cite{Henrici.1986}, Henrici presents a modification of Gaier's construction.  But, again, Henrici's bounds use certain numbers associated with the circular domain $C_D$ which usually is not known in advance.  

The purpose of this paper is to show that recent results in complex analysis by the first and third author lead naturally to a proof of the effective version of Theorem \ref{thm:KOEBE} within the framework of Type-2 Effectivity (TTE) \cite{McNicholl.2011.b}, \cite{Andreev.McNicholl.2011}, \cite{Weihrauch.2000}.   
That is, informally speaking, we show that arbitrarily good approximations to $f_D(z)$ and $C_D$ can be computed from sufficiently good approximations to $z$, $D$, and the boundary of $D$.  Furthermore, we show that this result is optimal in that arbitrarily good approximations to $D$ and its boundary can be computed from sufficiently good approximations to $f_D$ and $C_D$.  

\section{Preliminaries from computable analysis}

We first define a basis for the standard topology on $\XTC$ as follows.  Call an open rectangle $R$ \emph{rational} if the coordinates of its vertices are all rational numbers.
We first declare all rational rectangles to be basic.  Let $D_r(z)$ denote the open disk with radius $r$ and center $z$.  We then declare to be basic all sets of the form $\XTC - \overline{D_r(0)}$ where $r$ is a rational number.  
Let $\mathcal{O}(\XTC)$ be the set of all open subsets of $\XTC$.  Let $\mathcal{C}(\XTC)$ be the 
set of all closed subsets of $\XTC$.

We use the following naming systems.
\begin{enumerate}
	\item A name of a point $z \in \XTC$ is a list of all the basic neighborhoods that contain $z$.  
	
	\item A name of an open $U \subseteq \XTC$ is a list of all basic 
open sets whose closures are contained in $U$.   

	\item  A name of a closed $C \subseteq \XTC$ is a list of all basic open sets that intersect $C$.  
\end{enumerate} 

Suppose $f : \subseteq A \rightarrow B$ where $A, B$ are spaces for which we have established naming systems.  A Turing machine $M$ with a one-way output tape and access to an oracle $X$ is said to \emph{compute $f$} if
whenever a name of a point $x \in \dom(f)$ is written on the input tape, and $M$ is allowed to run indefinitely, a name of $f(x)$ is written on the output tape.  A \emph{name} of such a function then consists of a set $X \subseteq \N$ and a code of 
a Turing machine with one-way output tape which computes $f$ when given access to oracle $X$.  We note that a name of a function omits information about the domain of the function.
Thus, a name of a function may name many other functions as well.  This is an example of a 
\emph{multi-representation}.  This method of representing functions is taken from \cite{Grubba.Weihrauch.Xu.2008}.  Also, in order to compute a name of a function $f : \subseteq \XTC \rightarrow \XTC$ from some given data, it suffices to show that when a name of a $z \in \dom(f)$ is additionally provided, it is possible to uniformly compute a name of $f(z)$.  

Whenever we have established a naming system for a space, an object in that space is said to be \emph{computable} (with respect to the naming system), if it has a computable name.  

By an \emph{arc} we mean a subset of the plane that is the image of a continuous and injective map on $[0,1]$.  Such a map will be referred to as a \emph{parameterization} of the arc.  
Recall that a Jordan curve is a subset of the plane that is the image of a continuous map on $[0,1]$, $f$, with the property that $f(s) = f(t)$ only when $s,t \in \{0,1\}$.  Such a map will be referred to as a 
parameterization of the curve.

We will follow the usual mathematical practice of identifying an arcs and Jordan curves with their parameterizations.  However, when $\gamma$ is an arc or a Jordan curve, we speak of a name of $\gamma$ we mean a name of a parameterization of $\gamma$ rather than a name of $\gamma$ as a closed set.   
The distinction is necessary since, for example, J. Miller has shown that there is an arc that is computable as a closed set but that has no computable parameterization.  

When $\gamma$ is a Jordan curve, let $\Int(\gamma)$ denote its interior and let $\Ext(\gamma)$ denote its exterior.  The following 
follows from the main theorem of \cite{Gordon.Julian.Mines.Richman.1975}.

\begin{theorem}\label{thm:WINDING}
From a name of a Jordan curve $\gamma$, we can compute a name of the 
interior of $\gamma$ and a name of the exterior of $\gamma$.
\end{theorem}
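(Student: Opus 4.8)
The plan is to classify each basic open set according to whether its closure lies in the interior or the exterior of $\gamma$, using the winding number as the deciding invariant. Write $n(\gamma,z)$ for the winding number of $\gamma$ about a point $z \notin \gamma$. By the Jordan curve theorem, $\XTC \setminus \gamma$ has exactly two components, the bounded interior $\Int(\gamma)$ and the unbounded exterior $\Ext(\gamma)$ (the latter containing $\infty$), and $n(\gamma,\cdot)$ is constant on each, equal to $0$ on $\Ext(\gamma)$ and nonzero on $\Int(\gamma)$. The main theorem of \cite{Gordon.Julian.Mines.Richman.1975} supplies the computational content we need: from a name of $\gamma$ and a point $z$ lying at a positive distance from $\gamma$, we can compute $n(\gamma,z)$.

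First I would extract from the given name of $\gamma$ (that is, of a parameterization $f$ of $\gamma$) the basic computable data. Since $f$ is a computable continuous function on the compact interval $[0,1]$, the map $z \mapsto d(z,\gamma) = \min_{t \in [0,1]} |z - f(t)|$ is computable, as is the radius bound $\max_{t \in [0,1]} |f(t)|$. In particular, for any basic open set $B$ the quantity $d(\overline{B},\gamma)$ is computable, and the predicate $d(\overline{B},\gamma) > 0$---equivalently $\overline{B} \cap \gamma = \emptyset$---is semi-decidable.

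The algorithm then dovetails over all basic open sets $B$. For each $B$ it attempts to confirm $\overline{B} \cap \gamma = \emptyset$. When $B$ is a rational rectangle and this is confirmed, it selects a computable point $z_0 \in B$, computes $n(\gamma,z_0)$ via \cite{Gordon.Julian.Mines.Richman.1975}, and---because $\overline{B}$ is connected and disjoint from $\gamma$, hence contained in a single component---enumerates $B$ into the name of $\Int(\gamma)$ if $n(\gamma,z_0) \neq 0$ and into the name of $\Ext(\gamma)$ otherwise. When $B$ is of the form $\XTC \setminus \overline{D_r(0)}$, its closure avoids $\gamma$ exactly when $\gamma \subseteq D_r(0)$, which is confirmed from the radius bound; since such a set is connected and contains $\infty$, it lies in $\Ext(\gamma)$ and is enumerated there.

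For correctness of the resulting names I would verify completeness: if $\overline{B} \subseteq \Int(\gamma)$ then, $\Int(\gamma)$ being open and $\gamma$ being its boundary, $d(\overline{B},\gamma) > 0$, so the disjointness test eventually succeeds and, the winding number being nonzero throughout $\Int(\gamma)$, $B$ is correctly placed; the symmetric argument handles $\Ext(\gamma)$, including the basic neighborhoods of $\infty$. The main obstacle is the winding-number step: everything hinges on being able to compute $n(\gamma,z_0)$ from approximations to $f$ alone, with $z_0$ known only to lie at a positive distance from $\gamma$, and on the constructive Jordan curve theorem guaranteeing the $0$/nonzero dichotomy that lets a single evaluation classify an entire connected basic set. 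This is precisely the content we import from \cite{Gordon.Julian.Mines.Richman.1975}; the remaining steps are the routine semidecision procedures described above.
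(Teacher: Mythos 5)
Your proposal is correct and takes essentially the same route as the paper: the paper's entire proof is an appeal to the main theorem of \cite{Gordon.Julian.Mines.Richman.1975}, and your algorithm---semi-deciding that the closure of a basic set misses $\gamma$, then classifying that set via the winding number, whose computability and $0$/nonzero dichotomy are exactly the content imported from that constructive Jordan curve theorem---is the standard way of unpacking that citation into a TTE computation. The disjointness tests, the separate handling of neighborhoods of $\infty$ via the radius bound, and the soundness/completeness argument for the enumerated names are all in order.
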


A \emph{Jordan domain} is a finitely connected domain whose boundary components are Jordan curves.  
When we speak of a name of such a domain $D$ we mean an $n$-tuple $(P_1, \ldots, P_n)$ such that each $P_j$ is a name of a boundary component of $D$ and such that each boundary component of $D$ is named by at least one $P_j$.  When we speak of a name of a Jordan domain that is smooth (that is its boundary curves are continuously differentiable), we refer to a $2n$-tuple that includes in its name not only names of parameterizations of its boundary components, but also names of their derivatives.  It is necessary to use both since there are computable and continuously differentiable functions whose derivatives are not computable.   In this situtation, that is when we are naming a smooth Jordan domain $D$, we also require that each of these parameterizations be positively oriented with respect to $D$.   Intuitively, this means that as one travels around such a curve in the manner prescribed by its parameterization, the points of $D$ are always to the right.  This can be tested by means of the \emph{winding number}
\[
\eta(\gamma; z) = _{df} \frac{1}{2 \pi i} \int_\gamma \frac{1}{\zeta - z} d \zeta.
\]
If $\gamma$ is a boundary curve of $D$, and if $z_0$ is any point of the interior of $\gamma$, then $\gamma$ is positively oriented with respect to $D$ just in case 
\[
\eta(\gamma; z_0) = \left\{\begin{array}{cc}
								1 & \mbox{if $D$ is exterior to $\gamma$}\\
								-1 & \mbox{if $D$ is interior to $\gamma$}\\
								\end{array}
								\right.
\]

When $X \subseteq \C$, a \emph{ULAC function} for $X$ is a function $g : \N \rightarrow \N$ with the property that whenever $k \in \N$ and $z,w$ are distinct points of $X$ for which 
$|z - w| < 2^{-g(k)}$, there is an arc $A \subseteq X$ whose diameter is smaller than $2^{-k}$.

The following two theorems are proven in \cite{McNicholl.2011}.  Recall that when $X$ is a topological space and $a$ is a point of $X$, the \emph{connected component of $a$ in $X$} is the maximal connected subset of $X$ that contains $a$.

\begin{theorem}\label{thm:COMPONENT}
Suppose $D$ is an open disk, $A$ is an arc with ULAC function $g$, and $\zeta_0 \in A \cap D$.  Suppose $\zeta_1 \in D - A$ is such that $|\zeta_0 - \zeta_1| < 2^{-g(k)}$ where $k \in \N$ is such 
that $2^{-g(k)} + 2^{-k} \leq \max\{d(\zeta_0, \partial D), d(\zeta_1, \partial D)\}$.  Then, $\zeta_0$ is a boundary point of the connected component of $\zeta_1$ in $D - A$.  
\end{theorem}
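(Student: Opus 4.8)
\emph{Plan.} The plan is to reduce the statement to showing that $\zeta_0$ lies in the closure of $K$, the connected component of $\zeta_1$ in $D - A$. Since $\XTC$ is locally connected, $K$ is open, and since $\zeta_0 \in A$ we have $\zeta_0 \notin K$; hence $\zeta_0 \in \partial K$ precisely when $\zeta_0 \in \overline{K}$, i.e. when every neighborhood of $\zeta_0$ meets $K$. Throughout write $r = 2^{-g(k)}$ and $\delta = 2^{-k}$. The one consequence of the ULAC property I would record at the outset is this: if $z,w$ are points of $A$ with $|z-w| < r$, then the subarc of $A$ joining them has diameter less than $\delta$ (I read the definition as furnishing a small arc \emph{joining} $z$ and $w$, which for an arc can only be the subarc between them). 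Besides bounding the diameter of a single connecting subarc, this property is what will confine the portion of $A$ that interferes near $\zeta_0$: any strand of $A$ that returns to within $r$ of $\zeta_0$ drags its entire connecting subarc into $D_\delta(\zeta_0)$.

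Next, using that $D$ is a disk and hence convex, I would run a first-crossing argument along the straight segment $[\zeta_1,\zeta_0]$. This segment lies in $D$, and since $\zeta_0 \in A$ the set of its points lying on $A$ is nonempty and closed; let $p$ be the point of this set nearest $\zeta_1$. Then the half-open segment $[\zeta_1,p)$ lies in $D - A$, is connected, and contains $\zeta_1$, so it lies in $K$; consequently $p \in \overline{K}$. Moreover $|p - \zeta_0| \leq |\zeta_1 - \zeta_0| < r$, so $p$ and $\zeta_0$ are points of $A$ at distance less than $r$. If $p = \zeta_0$ we are done, so assume $p \neq \zeta_0$ and let $A'$ be the subarc of $A$ joining them; by the ULAC property $\diam(A') < \delta$, whence $A' \subseteq D_\delta(\zeta_0)$ and in particular $p \in D_\delta(\zeta_0)$. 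The hypothesis $r + \delta \leq \max\{d(\zeta_0,\partial D), d(\zeta_1,\partial D)\}$ is exactly what guarantees that $D_\delta(\zeta_0)$, together with the slightly larger region in which the final path will live, is contained in $D$: directly when the maximum is attained at $\zeta_0$, and via the triangle inequality together with $|\zeta_0 - \zeta_1| < r$ when it is attained at $\zeta_1$.

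It then remains to transport the accessibility of $p$ to $\zeta_0$. I would construct, for each small $\eta > 0$, a path in $D - A$ from a point of $[\zeta_1,p)$ to a point within $\eta$ of $\zeta_0$, thereby placing points of $K$ arbitrarily close to $\zeta_0$ and yielding $\zeta_0 \in \overline{K}$. The idea is to travel alongside the small subarc $A'$, staying on the side from which the segment approached $p$, exploiting that an arc does not separate the plane (so that a disk minus a compactly contained arc is connected) together with the fact that $A'$ meets the remainder of $A$ only at the shared endpoints $p$ and $\zeta_0$, so any compact subarc of $A'$ omitting those endpoints is at positive distance from the rest of $A$.

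The hard part will be this last transport step. The difficulty is that the ULAC property confines any interfering strand of $A$ to within $\delta$ of $\zeta_0$ but does not remove such strands from $D_\delta(\zeta_0)$ outright, and near the endpoints $p$ and $\zeta_0$ those strands emanate alongside $A'$. The crux is thus to show that the passage alongside $A'$ cannot be blocked: near $p$ this is ensured by the first-crossing property, which provides a clean approach on one side, while near $\zeta_0$ one needs the local structure forced by ULAC, namely that on a sufficiently small scale $A$ coincides with a single subarc through $\zeta_0$, so that the path can be brought arbitrarily close to $\zeta_0$ without meeting $A$. Pinning down this local arc structure, and the attendant planar ``go around the endpoint'' argument, is where the real work lies; the convexity and first-crossing steps, by contrast, are routine.
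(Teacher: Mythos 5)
Your preparatory steps are all correct and well justified: the reduction to showing $\zeta_0 \in \overline{K}$ (using that $K$ is open and $\zeta_0 \in A$), the reading of the ULAC condition (the small arc joining two close points of $A$ can only be the subarc of $A$ between them), the first\-/crossing argument giving $[\zeta_1,p) \subseteq K$ with $|p-\zeta_0| < 2^{-g(k)}$, and the verification that the hypothesis $2^{-g(k)}+2^{-k} \leq \max\{d(\zeta_0,\partial D), d(\zeta_1,\partial D)\}$ places $D_{2^{-k}}(\zeta_0)$ inside $D$. But everything after that is deferred, and what is deferred \emph{is} the theorem. (For context: the paper itself does not prove this statement; it quotes it from \cite{McNicholl.2011}, and the content of that citation is precisely your ``transport step.'') A proposal whose final paragraph says the crux ``is where the real work lies'' has not proved the statement, so the only question is whether your sketch of that crux would go through as described; I do not believe it would.

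The concrete difficulties are these. First, the statement is genuinely quantitative: if the inequality on $2^{-g(k)}+2^{-k}$ is dropped, the conclusion is false (run two nearly parallel strands of $A$ across $D$, one through $\zeta_0$ and one just above it, joined far outside $D$; a point $\zeta_1$ above both strands has a component that never approaches $\zeta_0$, and the long outside connection is exactly what forces $2^{-k}$ past the allowed radius). So the inequality must be converted into a structural fact --- every point of $A$ within $2^{-g(k)}$ of $\zeta_0$ lies on a single subarc $A''$ contained in $\overline{D_{2^{-k}}(\zeta_0)} \subseteq D$, whence the two arcs $B_1,B_2$ making up $A - A''$ miss the disk $D_{2^{-g(k)}}(\zeta_0)$ entirely --- and this involves two different scales ($2^{-g(k)}$ for isolation, $2^{-k}$ for size), which your phrase ``on a sufficiently small scale $A$ coincides with a single subarc'' conflates: $A''$ may weave in and out of $D_{2^{-g(k)}}(\zeta_0)$ many times. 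Second, the ``walk alongside $A'$'' faces obstructions it cannot handle as stated: the interfering strands are parts of $A''$ emanating from $p$ and $\zeta_0$ themselves, so there is no positive clearance near either end; they can force the walk to switch sides of $A'$ around a U-turn, so that $K$ reaches $\zeta_0$ from the side opposite to the segment's approach; and they can spiral infinitely often around $\zeta_0$, so no finite-stage construction of a path can succeed. The standard way to finish avoids building paths altogether and uses plane separation theory: let $C^*$ be the maximal subarc of $A$ through $\zeta_0$ whose interior lies in $D$ (it contains $A''$, so $B_1 = A$ ``before'' $C^*$ and $B_2 = A$ ``after'' $C^*$ avoid $D_{2^{-g(k)}}(\zeta_0)$); by the Jordan curve theorem for crosscuts, both components of $D - C^*$ have $\zeta_0$ on their boundary, so one can pick $q$ near $\zeta_0$ in the component containing $\zeta_1$; then two applications of Janiszewski's theorem to the closed connected sets $C^* \cup (\XTC - D)$, $B_1 \cup (\XTC - D)$, $B_2 \cup (\XTC - D)$ (whose pairwise intersections are connected, and none of which separates $q$ from $\zeta_1$) show that $A \cup (\XTC - D)$ does not separate $q$ from $\zeta_1$, i.e.\ $q \in K$. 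Some argument at this level --- crosscut property plus a Janiszewski/unicoherence-type separation theorem, or equivalent prime-end machinery --- is the missing core, and nothing in the proposal supplies it or a workable substitute.
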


We say that an arc $A$ \emph{links $z_0$ to $z_1$ via $U$} if $z_0$ and $z_1$ are the endpoints 
of $A$ and if $A \cap \partial U \subseteq \{z_0, z_1\}$.  

\begin{theorem}\label{thm:ACCESS}
From a name of an arc $A$, a point $z_0 \in \D - A$, and a name of a point $\zeta_0 \in A \cap \D$ that is a boundary point of the connected component of $z_0$ in $\D - A$, it is possible to uniformly compute a name of an arc $B$ that links $z_0$ to $\zeta_0$ via $\D - A$.
\end{theorem}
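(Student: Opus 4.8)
The plan is to realize $B$ as the concatenation of a sequence of polygonal arcs that march toward $\zeta_0$ from inside the component $\Omega$ of $z_0$ in $\D - A$, together with the terminal point $\zeta_0$ itself. Thus it suffices to produce points $w_0 = z_0, w_1, w_2, \dots$ lying in $\Omega$ with $w_n \to \zeta_0$, and polygonal arcs $\sigma_{n+1}$ joining $w_n$ to $w_{n+1}$ inside $\D - A$, arranged so that the diameters of the $\sigma_{n+1}$ tend to $0$ and so that each $\sigma_{n+1}$ meets the previously built portion only at its initial point $w_n$. Before beginning, I would compute two auxiliary data from the given names: a ULAC function $g$ for $A$ (computable from a name of the parameterization of $A$, since a modulus of continuity for the parameterization and a positive lower bound for its modulus of injectivity on each compact set $\{(s,t) : |s-t|\ge \varepsilon\}$ are both computable), and a positive rational lower bound $\rho$ for $d(\zeta_0,\partial\D) = 1 - |\zeta_0|$, which is available since $\zeta_0\in\D$.

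For the first step I would dovetail a search for a rational $w_1\in\D - A$ with $|w_1 - \zeta_0|$ below the threshold fixed below and a rational polygonal arc from $z_0$ to $w_1$ lying in $\D - A$; such a pair exists because $\zeta_0\in\partial\Omega$ and any two points of the open connected set $\Omega$ are joined by a polygonal arc in $\D - A$, and the two conditions are semi-decidable from the given names. The heart of the construction is the inductive step, where I would use Theorem~\ref{thm:COMPONENT} to keep everything near $\zeta_0$. Fix radii $r_n = 2^{-n}\rho$ and integers $k_n$ with $2^{-g(k_n)} + 2^{-k_n} \le r_n$, and maintain at every stage that $|w_n - \zeta_0| < 2^{-g(k_n)}$. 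Applying Theorem~\ref{thm:COMPONENT} with the disk $D_{r_n}(\zeta_0)$ and $\zeta_1 = w_n$ (so that $2^{-g(k_n)}+2^{-k_n}\le r_n = d(\zeta_0,\partial D_{r_n}(\zeta_0))$), we learn that $\zeta_0$ is a boundary point of the component $C_n$ of $w_n$ in $D_{r_n}(\zeta_0) - A$; in particular $C_n\subseteq D_{r_n}(\zeta_0)$ contains points arbitrarily close to $\zeta_0$. I would therefore search for a rational $w_{n+1}\in D_{r_n}(\zeta_0) - A$ with $|w_{n+1} - \zeta_0| < 2^{-g(k_{n+1})}$ and a rational polygonal arc $\sigma_{n+1}$ from $w_n$ to $w_{n+1}$ lying in $D_{r_n}(\zeta_0) - A$ and meeting the previously constructed arc only at $w_n$. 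Since $C_n$ is connected, open, and has $\zeta_0$ on its boundary, such $w_{n+1}$ and $\sigma_{n+1}$ exist and the defining conditions are semi-decidable; moreover $\diam \sigma_{n+1} < 2 r_n$, and since $w_n\in\Omega$ and $\sigma_{n+1}\subseteq\D - A$ we also get $w_{n+1}\in\Omega$.

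Passing to the limit, I would parameterize $B$ so that $\sigma_n$ occupies $[1 - 2^{-(n-1)}, 1 - 2^{-n}]$ and $B(1) = \zeta_0$. Continuity at $1$ is immediate because the tail $\bigcup_{m>n}\sigma_m$ lies in $D_{r_n}(\zeta_0)$ with $r_n\to 0$, so $B(t)\to\zeta_0$; continuity elsewhere is clear since each $\sigma_n$ is polygonal. As $B([0,1))\subseteq\D - A$ while $\zeta_0\in A$, no interior parameter value maps into $A$ or onto $\zeta_0$, so $B\cap\partial(\D - A)\subseteq\{\zeta_0\}\subseteq\{z_0,\zeta_0\}$ and $B$ links $z_0$ to $\zeta_0$ via $\D - A$. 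The requirement that $\sigma_{n+1}$ meet the earlier part only at $w_n$ makes $B$ injective on $[0,1)$, and injectivity at the endpoint holds since $\zeta_0\notin B([0,1))$; hence $B$ is a genuine arc, and since every search runs on data computed from the given names, a name of a parameterization of $B$ is produced uniformly.

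The step I expect to be the main obstacle is guaranteeing, at each stage, a short polygonal connector $\sigma_{n+1}$ that reaches a point nearer $\zeta_0$ while avoiding the already-built arc except at $w_n$. The localization is exactly what Theorem~\ref{thm:COMPONENT} supplies: it forces $w_n$ into a component $C_n$ of the small-disk complement whose closure contains $\zeta_0$. One must still check that the finitely many sub-arcs in which the previously built path meets $C_n$ do not separate $w_n$ from the points of $C_n$ clustering at $\zeta_0$. I would handle this by arranging that the path enters each disk $D_{r_n}(\zeta_0)$ only along its terminal sub-arc, so that the obstruction inside $C_n$ is a single slit ending at the interior point $w_n$; such a slit does not separate the planar region $C_n$, and $\sigma_{n+1}$ may be routed around its free tip toward $\zeta_0$. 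Securing this single-terminal-entry invariant, and verifying that the associated constraints remain semi-decidable, is the principal technical work; computing the ULAC function $g$ is a comparatively routine preliminary.
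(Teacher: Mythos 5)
Your framework (computing a ULAC function, localizing with Theorem~\ref{thm:COMPONENT}, concatenating rational polygonal connectors, and the limiting/continuity/injectivity argument at the end) is reasonable, and since the paper only imports this theorem from \cite{McNicholl.2011} rather than proving it, your proposal has to stand on its own. It does not, and the failure is exactly at the step you flag as ``the principal technical work'': the single-terminal-entry invariant is not something that can always be arranged, because it is formulated relative to the fixed disks $D_{r_n}(\zeta_0)$, i.e.\ relative to circles centered at $\zeta_0$, and the geometry of $A$ can force \emph{every} path in $\D - A$ approaching $\zeta_0$ to cross each small circle about $\zeta_0$ many times. Concretely, let $A$ spiral into $\zeta_0$ with oscillating radius, say with polar graph $r(\theta) = q^{\theta/2\pi} h(\theta)$ about $\zeta_0$, where $h$ is $2\pi$-periodic with $1 \le h \le 4$ and $q = 0.9$. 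This is an arc (successive turns are disjoint scaled copies), it admits a ULAC function of the form $g(k) = k + O(1)$ (points on adjacent turns are separated by a fixed fraction of their radius), and $\zeta_0$ is a boundary point of the component of any exterior point $z_0$, the only access being the channel between consecutive turns. But any path from $z_0$ toward $\zeta_0$ must run along that channel, whose distance to $\zeta_0$ swells by a factor of about $4$ on every turn while shrinking only by the factor $0.9$ per turn; hence for every sufficiently small $r$, every such path crosses the circle $|z - \zeta_0| = r$ dozens of times, so its intersection with $D_r(\zeta_0)$ has many components. Consequently: if the invariant is made a search condition, already the stage-one search (for a path from $z_0$ ending within $2^{-g(k_1)}$ of $\zeta_0$ meeting $D_{r_1}(\zeta_0)$ in a terminal sub-arc) never terminates, and the machine fails to output a name; if the invariant is not a search condition, then your existence argument for $\sigma_{n+1}$ collapses, and it genuinely can fail --- with no invariant, an earlier connector is free to sweep between $\zeta_0$ and all points sufficiently close to it, and once a later $w_n$ is (validly, at its own stage) chosen on the wrong side of that sweep, it is separated inside $C_n$ from every point close enough to $\zeta_0$ to serve as $w_{n+1}$.

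The salvageable kernel is that in examples like the spiral it is the \emph{component}, not the disk, that does the work: the early crossings of the old path into $D_{r_n}(\zeta_0)$ land in components of $D_{r_n}(\zeta_0) - A$ other than $C_n$, so the obstruction one should control is $P_n \cap C_n$, not $P_n \cap D_{r_n}(\zeta_0)$. Your Janiszewski-type observation (a single slit ending at an interior point does not disconnect an open connected planar set) is correct, and applied to $P_n \cap C_n$ inside $C_n$ it is exactly what is wanted. But ``$P_n \cap C_n$ is a terminal sub-arc of $P_n$'' is a genuinely different invariant from the one you propose: it refers to the adaptively determined components $C_n$, it is not preserved by your search conditions as stated (a connector may enter $C_{n+1}$ through one opening on the circle, leave through another, and re-enter, creating a chord of $C_{n+1}$, and chords \emph{can} separate), and establishing that it can be maintained while keeping all conditions semi-decidable is precisely the missing substance of the proof rather than a routine check. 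As it stands, the proposal has a genuine gap at its central step.
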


Let $f : [0,1] \rightarrow \C$ be a continuous function for which there exist numbers
\[
0 = t_0 < t_1 < \ldots < t_k = 1
\]
and points $v_0, v_1, \ldots, v_k \in \C$ such that 
\begin{eqnarray*}
f(x) & = & \frac{x - t_j}{t_{j+1} - t_j}(v_{j+1} - v_j) + v_j
\end{eqnarray*}
whenever $x \in [t_j, t_{j+1}]$.  $f$ is called a \emph{polygonal curve}.  The points $v_0, \ldots, v_k$ are called the \emph{vertices} of $f$.  We will call the points $v_1, \ldots, v_{k-1}$ the \emph{intermediate vertices} of $f$.  A \emph{rational polygonal curve} is a polygonal curve whose vertices are all rational.  The following is well-known.  

\begin{lemma}\label{lm:POLY.ARC}
From a name of a domain $U$ and names of distinct $p,q \in U$, it is possible to uniformly compute a polygonal arc $P$ from $p$ to $q$ that is contained in $U$ and whose intermediate vertices are rational. 
\end{lemma}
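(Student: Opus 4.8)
The plan is to exhibit a chain of overlapping convex basic sets running from $p$ to $q$ inside $U$, and then to route a polygonal path through rational points chosen in the successive overlaps. Since the object we seek is a bounded polygonal arc, we have $p,q \in \C$, and it suffices to work inside the open, connected planar set $V = U \cap \C$. From the name of $U$ I would enumerate the basic open sets whose closures lie in $U$, discard those of the form $\XTC - \overline{D_r(0)}$, and retain the rational rectangles $R_0, R_1, \ldots$. Because every point of $V$ has a rational rectangle neighborhood with closure in $U$, the $R_i$ cover $V$; moreover each $R_i$ is convex with $\overline{R_i} \subseteq U$, so any segment whose endpoints lie in a single $R_i$ stays inside $U$.

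Next I would locate a rectangle $R_p$ among the $R_i$ that contains $p$: since each $R_i$ is itself a basic neighborhood, $R_i$ contains $p$ exactly when $R_i$ is listed in the name of $p$, so dovetailing the enumeration of the name of $U$ against the name of $p$ turns up such an $R_p$; likewise I obtain an $R_q$ containing $q$. Whether two rational rectangles overlap is decidable from their coordinates, so at each finite stage I can form the graph on the rectangles seen so far whose edges join overlapping pairs and search for a path from $R_p$ to $R_q$. The key point is that this search terminates, and this follows from the connectedness of $V$: the union of all rectangles reachable from $R_p$ by overlapping chains, and the union of the remaining rectangles, are disjoint open sets covering $V$ (two overlapping rectangles are either both reachable or both unreachable). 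Since the first is nonempty, connectedness forces the second to be empty, so $R_q$ is reachable, and each rectangle of a witnessing chain appears at some finite stage.

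Having obtained a chain $R_p = R_{i_0}, R_{i_1}, \ldots, R_{i_m} = R_q$ with $R_{i_j} \cap R_{i_{j+1}} \neq \emptyset$, I would compute a rational point $w_j$ in each nonempty intersection $R_{i_j} \cap R_{i_{j+1}}$, which is again a rational rectangle, and concatenate the segments $[p, w_0], [w_0, w_1], \ldots, [w_{m-1}, q]$. Each such segment has both endpoints in a single convex $R_{i_j}$, hence lies in $U$, and every intermediate vertex $w_j$ is rational, so this is a polygonal path from $p$ to $q$ inside $U$ with rational intermediate vertices.

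The remaining obstacle, and the only genuinely delicate step, is to pass from this polygonal path to a polygonal \emph{arc}, that is, to eliminate self-intersections. I would first reduce to the purely combinatorial core by joining $p$ to a rational point of $R_p$ and $q$ to a rational point of $R_q$ by single segments, so that the two terminal segments are the only ones with a non-rational endpoint. Intersections among the rational interior segments then involve only rational data and are decidable, so a finite loop-erasure, repeatedly splicing out the sub-path between two points of a detected self-crossing, reduces the interior to a simple rational polygonal arc in finitely many steps while keeping it inside $U$. The care needed at $p$ and $q$ is to keep the two terminal segments short enough to meet the rest of the (compact) path only at their rational endpoints; arranging this isolates the non-rational data from the loop-erasure and yields the desired polygonal arc $P$.
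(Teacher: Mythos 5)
Your chain-of-rectangles argument is sound and effectively carried out: the name of $U$ lists every rational rectangle whose closure lies in $U$, overlap of rational rectangles is decidable, the two-open-sets connectedness argument correctly shows the graph search terminates, and convexity gives a polygonal path from $p$ to $q$ through rational points of the overlaps. (The paper itself states this lemma without proof, so the comparison here is purely on the merits.) The genuine gap is in your final step, converting the path into an \emph{arc}. Loop-erasure on the all-rational core is indeed finite and decidable, but your treatment of the two terminal segments fails. First, ``short enough'' cannot prevent exact collinear backtracking: after loop-erasure the core $K$ leaves its rational endpoint $a$ along some rational direction, and if $p$ happens to lie exactly on the ray from $a$ in that direction, then $[p,a]$ overlaps the first segment of $K$ in a whole subsegment no matter how short $[p,a]$ is; choosing $a$ closer to $p$ does not help, since $K$ (hence its initial direction) changes with $a$ and the bad event can recur. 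Second, nothing prevents the rational core itself from passing exactly through $p$ or $q$: the segment $[a,w_0]$ lies in a rectangle containing $p$, and a rational segment can easily contain a non-rational point; if that happens, the assembled path visits $p$ twice and is not injective. Both bad events are exact incidence relations involving points known only through approximations, so they are neither decidable nor excludable by any shortness condition.

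What is missing is a perturbation argument compatible with semi-decidable information: the bad configurations confine $p$ (or $q$) to a line, and their negations are \emph{open} conditions, hence detectable from sufficiently good approximations of $p$ and $q$. For instance, first shorten the chain so that non-consecutive rectangles are pairwise disjoint (decidable, and it strictly shortens the chain, so it terminates); then the only possible failures at the $p$-end are that $[p,w_0]$ and $[w_0,w_1]$ overlap beyond $w_0$, or that $p\in[w_0,w_1]$, and both are ruled out by choosing $w_0$ so that $p$, $w_0$, $w_1$ are not collinear. Take three non-collinear rational candidates for $w_0$ in the overlap rectangle: at most two can lie on the line through $p$ and $w_1$, and non-collinearity is semi-decidable, so dovetailing the three tests halts with a good choice; symmetrically at the $q$-end, with disjointness handling all remaining pairs of segments. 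With this device (or an equivalent one) replacing ``short enough,'' your construction goes through; as written, it does not.
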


Whenever $\phi$ is a conformal map of $\D$ onto a bounded domain $U$, $\phi$ has a continuous extension to $\overline{\D}$ \cite{Pommerenke.1992}.  The next theorem follows from the main theorem of \cite{McNicholl.2011}.

\begin{theorem}\label{thm:BOUNDARY.EXTENSION}
From a name of a bounded domain $D$, a name of $\partial D$, a name of a conformal map $\phi$ of $\D$ onto $D$, and a ULAC function for $\partial D$, it is possible to uniformly compute a name of the continuous extension of $\phi$ to $\overline{\D}$.
\end{theorem}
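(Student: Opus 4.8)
The plan is to prove Theorem \ref{thm:BOUNDARY.EXTENSION} by reducing the extension problem to the behavior of $\phi$ along radial or near-boundary approach paths, and then using the ULAC function together with the accessibility machinery (Theorems \ref{thm:COMPONENT} and \ref{thm:ACCESS}) to obtain effective, uniform control on the modulus of continuity of the boundary values. Recall that the continuous extension of $\phi$ to $\overline{\D}$ exists by the cited result of Pommerenke, so the entire content is effectivity: given names of $D$, $\partial D$, $\phi$, and a ULAC function $g$ for $\partial D$, we must uniformly produce a name of the extended map $\tilde{\phi} : \overline{\D} \to \overline{D}$. By the closing remark in the preliminaries, it suffices to show that when we are additionally handed a name of a point $\zeta \in \overline{\D}$, we can uniformly compute a name of $\tilde{\phi}(\zeta)$. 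For interior points $\zeta \in \D$ this is immediate from the name of $\phi$, so the work is entirely at boundary points $\zeta \in \partial \D$.

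First I would fix a boundary point $\zeta$ with $|\zeta| = 1$ and approach it from inside along a sequence of interior points $w_m \to \zeta$ (for instance $w_m = (1 - 2^{-m})\zeta$), each of which has a computable name obtained from the name of $\zeta$. The images $\phi(w_m)$ are computable from the name of $\phi$, and they converge to $\tilde{\phi}(\zeta)$ since $\tilde{\phi}$ is continuous. The heart of the matter is to convert this qualitative convergence into a computable modulus: I must exhibit, from the given data, a rate at which the $\phi(w_m)$ cluster, so that I can certify that $\phi(w_m)$ lies within $2^{-k}$ of the limit and hence output basic neighborhoods of $\tilde{\phi}(\zeta)$. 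The strategy is to bound the oscillation of $\phi$ on small arcs near $\zeta$. Given a target precision $2^{-k}$, use the ULAC function $g$ for $\partial D$ to locate a scale at which any two boundary points of $\partial D$ that are close are joined by an arc of diameter less than $2^{-k}$; pulling this back through $\phi$ and combining it with a crosscut/accessibility argument controls how far apart the images $\phi(w_m)$ and $\phi(w_{m'})$ can be for large $m, m'$.

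The key step is therefore the effective version of the classical crosscut argument underlying boundary correspondence. Concretely, for points $w_m, w_{m'}$ both close to $\zeta$ on the same radius, I would use Lemma \ref{lm:POLY.ARC} to connect them by a short polygonal arc inside $\D$ near the boundary, apply $\phi$ to obtain a curve in $D$ whose endpoints are $\phi(w_m)$ and $\phi(w_{m'})$, and then argue that this image curve must stay within a small neighborhood of $\partial D$. At that point, the accessibility results — Theorem \ref{thm:COMPONENT} identifying which component a near-boundary point sits in, and Theorem \ref{thm:ACCESS} producing a linking arc — let me relate the image curve to an arc along $\partial D$ whose diameter is controlled by the ULAC function. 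This yields the quantitative estimate $|\phi(w_m) - \phi(w_{m'})| < 2^{-k}$ for all sufficiently large $m, m'$, with the threshold computable from $g$, the name of $\partial D$, and the name of $\phi$.

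I expect the main obstacle to be making the crosscut-to-boundary-arc correspondence effective and \emph{uniform}: classically one knows that the image of a small crosscut of $\D$ ending near $\zeta$ shrinks because the conformal map has finite Dirichlet energy, but converting this into an explicit, computable bound on the diameter of the image — with a threshold that depends only on the supplied names and not on unavailable analytic data about $\phi$ — requires carefully feeding the ULAC function $g$ into the estimate so that ``close boundary points are joined by a small arc'' transfers back through the conformal map. Once that effective oscillation bound is in hand, assembling it into a name of $\tilde{\phi}(\zeta)$, and then into a name of the function $\tilde{\phi}$ via the uniformity remark, is routine. This is precisely where the hypothesis that we are handed a ULAC function, rather than merely a name of $\partial D$, does the essential work.
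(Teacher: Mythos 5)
The paper itself offers no internal proof of this theorem: it is stated as a consequence of the main theorem of \cite{McNicholl.2011}, so the only ``proof'' in the paper is that citation. You instead attempt a direct argument, which would be more than the paper does, but it has a genuine gap at exactly the point where all the content lies. Your plan is to approach $\zeta$ radially and certify convergence of $\phi(w_m)$ by an effective oscillation bound extracted from the ULAC function; however, the mechanism you propose for that bound does not work. Joining $w_m$ to $w_{m'}$ by a short polygonal arc in $\D$ (Lemma \ref{lm:POLY.ARC}) and applying $\phi$ gives a curve whose endpoints are $\phi(w_m)$ and $\phi(w_{m'})$, and you then assert that this image curve ``must stay within a small neighborhood of $\partial D$.'' That claim is both unjustified and insufficient: a curve lying close to $\partial D$ can still have large diameter (it can run along the entire boundary), so nothing in your argument bounds $|\phi(w_m)-\phi(w_{m'})|$. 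The classical Carath\'eodory--Koebe--Wolff argument requires an ingredient you never supply: a length--area estimate producing a crosscut of $\D$ near $\zeta$ whose $\phi$-image has computably small length (this is where boundedness of $D$, i.e.\ finite area, enters), whose image endpoints on $\partial D$ are then joined by a small subarc of $\partial D$ via the ULAC function, followed by a Jordan-curve/maximum-principle argument that the subdomain of $D$ cut off by the crosscut together with that boundary arc has small diameter. You gesture at this (``finite Dirichlet energy'') but then explicitly defer it as ``the main obstacle,'' so the key quantitative, effective step is left unproven. Theorems \ref{thm:COMPONENT} and \ref{thm:ACCESS} cannot substitute for it; they concern accessibility of points of an arc from a complementary component, and play no role in controlling the modulus of a boundary extension.

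A secondary but real defect: your computation of $\tilde{\phi}(\zeta)$ begins by splitting into the cases $\zeta \in \D$ and $\zeta \in \partial \D$. That case distinction is not decidable from a name of $\zeta$, since a name only supplies basic neighborhoods containing $\zeta$ and can never certify $|\zeta|=1$. A valid algorithm must treat all $\zeta \in \overline{\D}$ uniformly, evaluating $\phi$ at nearby interior points and invoking a uniform effective modulus of continuity near $\partial \D$ --- precisely the missing estimate above. This flaw is repairable once that modulus is in hand, but as written the algorithm is not well defined, and the proposal as a whole is a plan whose central step remains to be carried out.
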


When $D \subseteq \XTC - \{\infty\}$ is a domain, a real-valued function on $D$ is \emph{harmonic} if it has continuous first and second partial derivatives and satisfies the \emph{Laplace equation}
\[
\frac{\partial^2 u}{\partial x^2} + \frac{\partial^2 u}{\partial y^2} = 0.
\]
If $D$ is a domain that contains $\infty$, then a real-valued function $u$ on $D$ is harmonic if it is harmonic on $D - \{\infty\}$ and if there is a positive number $R$ such that $z \mapsto u(1/z)$ is harmonic on $D_R(0)$.

Suppose $D$ is a finitely-connected Jordan domain.  When $f$ is a bounded and piecewise continuous real-valued function on the boundary of $D$, the corresponding \emph{Dirichl\'et problem} is to find a harmonic function on $D$, $u$, such that 
\[
\lim_{z \rightarrow \zeta} u(z) = f(\zeta)
\]
at each $\zeta \in \partial D$ at which $f$ is continuous.  Such Dirichl\'et problems have solutions and that their solutions are unique \cite{Garnett.Marshall.2005}.  Accordingly, we say that the function $u$ is \emph{determined by the boundary data $f$} and denote it by $u_f$.

The following is proven in \cite{McNicholl.2010.1}.

\begin{lemma}\label{lm:DIRICHLET.DISK}
Given names of arcs $\gamma_1, \ldots, \gamma_n$ such that $\partial \D = \gamma_1 + \ldots + \gamma_n$,
and given names of continuous real-valued functions $f_1, \ldots, f_n$ such that $\gamma_j = \dom(f_j)$, we can compute a name of the harmonic
function $u$ on $\D$ defined by the boundary data
\[
f(\zeta) = \left\{ \begin{array}{cc}
					f_j(\zeta) & \zeta \in \gamma_j, \zeta \neq \gamma_j(0), \gamma_j(1)\\
					\max_j \max f_j & \mbox{otherwise}.\\
					\end{array}
					\right.
\]
In addition we can compute the extension of $u$ to $\overline{\D}$ except
at the endpoints of the arcs $\gamma_1, \ldots, \gamma_n$.
\end{lemma}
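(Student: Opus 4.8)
The plan is to solve the Dirichlet problem on the disk by reducing it to the classical Poisson integral formula and showing that the relevant integral operator is computable in the sense of TTE. I would phrase the harmonic function as
\[
u(z) = \frac{1}{2\pi} \int_0^{2\pi} \frac{1 - |z|^2}{|e^{i\theta} - z|^2} \, f(e^{i\theta}) \, d\theta,
\]
where $f$ is the piecewise-defined boundary data assembled from the given $f_1, \ldots, f_n$. The first step is to observe that, given names of the arcs $\gamma_1, \ldots, \gamma_n$ partitioning $\partial\D$ and names of the continuous functions $f_j$ on each arc, one can compute, uniformly, the values $f(e^{i\theta})$ at those $\theta$ where $f$ is continuous, together with a uniform bound $M = \max_j \max f_j$ that controls $f$ everywhere (this bound being computable since each $f_j$ is a computable continuous function on a compact domain).

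Next I would verify that the Poisson integral is computable as a function of $z$ for $z$ in the open disk. The key point is that for a fixed $z$ with $|z| < 1$, the Poisson kernel $P(z, \theta) = (1 - |z|^2)/|e^{i\theta} - z|^2$ is a continuous and computable function of $\theta$, and the product $P(z,\theta) f(e^{i\theta})$ is bounded by $M \cdot P(z,\theta)$ with $\int_0^{2\pi} P(z,\theta)\, d\theta = 2\pi$. Because $f$ has only finitely many discontinuities (the $2n$ endpoints of the arcs), the integrand is Riemann integrable, and on the interior the integral depends continuously and computably on $z$; a modulus of continuity for $u$ on compact subsets of $\D$ can be extracted from a modulus of continuity for the Poisson kernel away from the boundary. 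Thus a name of $u$ on $\D$ is obtained by computing approximating Riemann sums of the Poisson integral, with error controlled by $M$ and by the computable modulus of each $f_j$.

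For the extension to $\overline{\D}$ minus the endpoints, I would fix a boundary point $\zeta = e^{i\theta_0}$ that is not an arc endpoint and show that $u$ extends continuously there with value $f(\zeta)$. Near such a $\zeta$ the data $f$ agrees with a single computable continuous $f_j$, so one has a computable local modulus of continuity for the boundary data; the classical boundary behavior of the Poisson integral then gives $\lim_{z \to \zeta} u(z) = f(\zeta)$, and this convergence is effective because the standard estimate splitting the integral into a neighborhood of $\theta_0$ (where $f$ is nearly constant by the local modulus) and its complement (where the Poisson kernel is uniformly small as $z \to \zeta$) involves only computable quantities. Packaging these local estimates uniformly over the boundary away from the finitely many endpoints yields a name of the extended function.

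The main obstacle I expect is the uniform handling of the discontinuities at the arc endpoints and the resulting non-uniformity of the boundary convergence as one approaches those endpoints. Away from the endpoints the argument is clean, but controlling the error in the Riemann-sum approximation uniformly on a compact set that is allowed to touch $\partial\D$ (except near the endpoints) requires care: one must ensure that the splitting radius in the boundary estimate and the mesh of the Riemann sum can be chosen effectively from the distance of $\zeta$ to the nearest endpoint and from the local moduli of the $f_j$. Making this dependence explicit and uniform — so that a single Turing machine produces the name of the extension on the whole of $\overline{\D}$ minus the endpoints — is the delicate part, and it is precisely why the statement excludes the endpoints from the domain of the extension.
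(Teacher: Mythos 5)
First, a point of order: the paper does not actually prove this lemma --- it is stated as imported from \cite{McNicholl.2010.1} --- so there is no in-paper argument to compare yours against; what follows assesses your proposal on its own terms. Your Poisson-integral route is the natural one and is sound in outline: the Poisson integral of the bounded, piecewise continuous boundary data is precisely the function $u_f$ the lemma refers to; the value $\max_j \max f_j$ assigned at the arc endpoints is irrelevant because it affects the integrand only on a null set; and your splitting estimate for the boundary behavior, with the splitting radius and the modulus computed from a positive rational lower bound on the distance to the nearest endpoint (such a bound is indeed computable from a name of a non-endpoint boundary point, since the endpoints $\gamma_j(0), \gamma_j(1)$ are computable points), is exactly how one makes the extension to $\overline{\D}$ minus the endpoints effective.

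The one step that needs repair is your first one: ``compute the values $f(e^{i\theta})$ where $f$ is continuous'' and then form Riemann sums of $P(z,\theta)f(e^{i\theta})$. As a function on $\partial \D$, $f$ is not computable, and even at continuity points evaluation requires determining which arc $e^{i\theta}$ lies on, which is only semi-decidable (and impossible at the endpoints themselves); a Riemann sum, however, evaluates the integrand at sample points fixed in advance, and nothing prevents those samples from landing at or arbitrarily near endpoints, where no effective modulus controls the error --- note that $f$ has no uniform modulus at all, only each $f_j$ does. The fix is to never treat $f$ as a single function: from the names of the $\gamma_j$ compute the closed angle intervals $I_j$ with $\{e^{i\theta} : \theta \in I_j\} = \gamma_j$ (their endpoints are computable reals, and which of the two candidate circular arcs equals $\gamma_j$ is decided by locating the interior point $\gamma_j(1/2)$), and write $u(z) = \frac{1}{2\pi} \sum_j \int_{I_j} P(z,\theta)\, f_j(e^{i\theta})\, d\theta$. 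On each closed $I_j$ the integrand is continuous with a modulus of uniform continuity computable from the names of $f_j$ and $\gamma_j$, so effective Riemann integration applies term by term, the discontinuities of $f$ never enter, and the remainder of your argument --- including the effective boundary estimates, which should likewise be run arc by arc --- goes through as written.
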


The following follows from the main result of \cite{Hertling.1999}.  See also \cite{Bishop.Bridges.1985} and \cite{Cheng.1973}.

\begin{theorem}\label{thm:1.CONNECTED}
From a name of a $1$-connected and non-degenerate domain $D$ that contains $\infty$ but not $0$, and a name of $\partial D$, it is possible to compute a names of $f_D$, $C_D$, and $\partial C_D$.
\end{theorem}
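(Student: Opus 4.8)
The plan is to reduce the problem, by an inversion, to the classical Riemann mapping situation for a bounded domain, to invoke the effective Riemann mapping theorem, and then to recover $f_D$, $C_D$, and $\partial C_D$ from the first two Taylor coefficients of the resulting Riemann map at the origin. I first observe the structural consequence of the hypotheses: since $D$ is $1$-connected and contains $\infty$, its complement $E = \XTC - D$ is a compact connected subset of $\C$ that contains $0$ (as $0 \notin D$) and has at least two points (by non-degeneracy). Hence $C_D$ is the exterior of a single closed disk, $C_D = \XTC - \overline{D_r(a)}$ with $r > 0$ and $a \in \C$, so it suffices to compute the two parameters $r, a$ together with $f_D$. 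Let $\iota(z) = 1/z$, a computable self-homeomorphism of $\XTC$ that is its own inverse. Since $\iota$ and $\iota^{-1}$ are computable, from the given names I can compute a name of the bounded, simply connected domain $D^{*} = \iota[D]$, which contains $0$ because $\infty \in D$, and a name of $\partial D^{*} = \iota[\partial D]$.

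Next I would apply the effective Riemann mapping theorem (the main result of \cite{Hertling.1999}) to $D^{*}$ and $\partial D^{*}$ to compute a name of the unique conformal map $\psi$ of $D^{*}$ onto $\D$ with $\psi(0) = 0$ and $\psi'(0) > 0$; should the cited result instead furnish the map $\D \to D^{*}$, I would pass to its inverse, which is again computable for a conformal map. Writing $\psi(u) = a_1 u + a_2 u^2 + \cdots$ near $0$ with $a_1 = \psi'(0) > 0$, a direct expansion at $\infty$ shows that $z \mapsto a_1/\psi(1/z)$ is a conformal map of $D$ onto $\XTC - \overline{D_{a_1}(0)}$ of the form $z - a_2/a_1 + O(z^{-1})$. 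Thus the normalization $f_D(z) = z + O(z^{-1})$ is achieved by
\[
f_D(z) = \frac{\psi'(0)}{\psi(1/z)} + \frac{\psi''(0)}{2\,\psi'(0)},
\]
and by the uniqueness asserted in Theorem \ref{thm:KOEBE} this is the required map, with
\[
r = \psi'(0), \qquad a = \frac{\psi''(0)}{2\,\psi'(0)}.
\]

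It then remains to verify computability of these ingredients. The coefficients $\psi'(0)$ and $\psi''(0)$ are computable from a name of $\psi$: since $0 \in D^{*}$ is open, I can compute a rational $\rho_0 > 0$ with $\overline{D_{\rho_0}(0)} \subseteq D^{*}$ and evaluate the Cauchy integrals $\psi^{(j)}(0) = \tfrac{j!}{2\pi i}\int_{|\zeta| = \rho_0} \psi(\zeta)\,\zeta^{-j-1}\,d\zeta$ for $j = 1, 2$ to arbitrary precision from the values of $\psi$. From these I obtain names of the computable numbers $r$ and $a$, hence names of $C_D = \XTC - \overline{D_r(a)}$ and of its boundary circle $\partial C_D$. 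Finally $f_D$ is a composition of the computable maps $z \mapsto 1/z$, $\psi$, and the affine map above, so a name of $f_D(z)$ is uniformly computable from a name of $z$, with $f_D(\infty) = \infty$ read off from the expansion at $\infty$. I expect the entire substance of the argument to be carried by \cite{Hertling.1999}; everything else is a computable change of coordinates and the extraction of two Taylor coefficients. The one point that must be treated with care is that the bare open set $D^{*}$ does not in general determine $\psi$ computably, so the proof must genuinely use the supplied name of $\partial D$ (equivalently $\partial D^{*}$) in order to meet the hypotheses of the cited theorem.
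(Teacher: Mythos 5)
Your proposal is correct and takes essentially the paper's own route: the paper offers no argument for this theorem beyond citing the main result of \cite{Hertling.1999}, and your reduction---invert to $D^{*}=\iota[D]$, apply Hertling's effective Riemann Mapping Theorem (which requires exactly the open-set name plus the boundary name you supply), then renormalize via the first two Taylor coefficients, where the formula $f_D(z)=\psi'(0)/\psi(1/z)+\psi''(0)/\bigl(2\psi'(0)\bigr)$ and the identifications $r=\psi'(0)$, $a=\psi''(0)/\bigl(2\psi'(0)\bigr)$ check out against the uniqueness in Theorem \ref{thm:KOEBE}---is precisely how that citation is meant to be cashed out. One slip worth correcting: $D^{*}$ need not be bounded (if $0\in\partial D$, e.g.\ $D=\XTC-[0,1]$, then $D^{*}$ is unbounded), but this is harmless since Hertling's theorem applies to arbitrary proper simply connected subdomains of $\C$; just delete the word ``bounded.''
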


The following generalize results 
from \cite{Hertling.paper.0}.

\begin{theorem}[\bf Extended Computable Open Mapping Theorem]\label{thm:ECOMT}
From a name of a non-constant meromorphic $f$ and a name of an open subset of its domain, $U$, 
one can compute a name of $f[U]$.
\end{theorem}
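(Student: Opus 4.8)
The plan is to produce a name of $f[U]$ directly from the convention for names of open sets: I will describe a procedure that enumerates exactly those basic open sets $B$ with $\overline{B}\subseteq f[U]$. Since $f$ is non-constant and meromorphic it is an open map of $U$ into $\XTC$, so $f[U]$ is indeed open, and the two families of basic sets --- rational rectangles, whose closures omit $\infty$, and the complements $\XTC-\overline{D_r(0)}$, whose closures contain $\infty$ --- will be treated separately. The engine in both cases is the argument principle together with the fact that the winding number of a computable loop about a computable point off the loop is a computably obtainable integer.

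First I treat the finite part $f[U]\cap\C$. I dovetail over all rational polygonal Jordan curves $\gamma$, and for each I semi-decide two conditions: that $\overline{\Int(\gamma)}\subseteq U$, and that $f$ has no pole on $\overline{\Int(\gamma)}$. The first is semi-decidable because, using Theorem \ref{thm:WINDING}, I can obtain $\overline{\Int(\gamma)}$ as a computable compact set and test its containment in the open set $U$ by finding a finite subcover drawn from the name of $U$. The second is semi-decidable because $f[\overline{\Int(\gamma)}]$ is a computable compact subset of $\XTC$ and the pole-free condition is exactly $\infty\notin f[\overline{\Int(\gamma)}]$, i.e.\ positivity of the computable spherical distance from $\infty$ to that compact set. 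Once $\gamma$ passes both tests, $f$ is holomorphic on $\overline{\Int(\gamma)}$, so for every $w\in\C\setminus f(\gamma)$ the winding number $\eta(f\circ\gamma;w)$ equals the number of solutions of $f(z)=w$ in $\Int(\gamma)$ counted with multiplicity, and is therefore nonnegative. Hence the open set $G_\gamma=\{\,w : \eta(f\circ\gamma;w)\geq 1\,\}$ satisfies $G_\gamma\subseteq f[\Int(\gamma)]\subseteq f[U]$. I then enumerate the basic rectangles $B$ with $\overline{B}\subseteq G_\gamma$: since $\overline{B}$ is connected, if $\overline{B}\cap f(\gamma)=\emptyset$ then $\eta(f\circ\gamma;\cdot)$ is constant on $\overline{B}$, so it suffices to check disjointness of the two computable compacta $\overline{B}$ and $f(\gamma)$ and then check $\eta(f\circ\gamma;w_0)\geq 1$ at a single rational $w_0\in B$, an exactly computable integer. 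Outputting every such $B$ over all admissible $\gamma$ is sound by the containment above, and it is complete: if $\overline{B}\subseteq f[U]\cap\C$ then each $w\in\overline{B}$ is $f(z_w)$ for some non-pole $z_w\in U$, and a sufficiently small rational polygonal Jordan curve around $z_w$ is admissible and winds at least once about $w$, so by compactness of $\overline{B}$ finitely many such curves place $\overline{B}$ inside $\bigcup_\gamma G_\gamma$; since each $G_\gamma$ is enumerated, every basic rectangle with closure in $f[U]\cap\C$ is eventually listed.

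It remains to enumerate the basic neighborhoods of $\infty$ that lie in $f[U]$. For this I apply the entire finite-part procedure to the meromorphic function $g=1/f$, whose name is computable from that of $f$; its finite-image enumeration lists exactly the basic rectangles with closure in $g[U]\cap\C$. Because $g[U]$ is the image of $f[U]$ under the reciprocal $w\mapsto 1/w$, I have $\overline{(\XTC-\overline{D_r(0)})}=\XTC-D_r(0)\subseteq f[U]$ precisely when $\overline{D_{1/r}(0)}\subseteq g[U]$, where $1/\infty=0$. The latter is a containment of a computable compact disk in the open set $g[U]$, which I semi-decide from the enumeration already produced for $g$; whenever it is confirmed I output $\XTC-\overline{D_r(0)}$. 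Combining the two output streams yields a name of $f[U]$.

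The main obstacle is the presence of poles. The winding-number certificate is valid only when $\Int(\gamma)$ contains no poles, for otherwise $\eta(f\circ\gamma;w)=Z-P$ may be zero or negative even though $f$ attains $w$; this is why I must semi-decide pole-freeness via $\infty\notin f[\overline{\Int(\gamma)}]$, and why the values of $f$ near $\infty$ are invisible to these curves and must instead be recovered through $g=1/f$. The remainder is routine computable-analysis bookkeeping: obtaining $\overline{\Int(\gamma)}$, $f(\gamma)$, and $f[\overline{\Int(\gamma)}]$ as computable compacta, and extracting the integer $\eta(f\circ\gamma;w_0)$ exactly from a loop that is computably bounded away from $w_0$, which needs no derivative of $\gamma$ and hence no data beyond the names already in hand.
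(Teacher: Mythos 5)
Your procedure is sound and the open sets it certifies do cover $f[U]$, but it does not produce a name of $f[U]$: by the paper's conventions a name must list \emph{every} basic open set whose closure is contained in $f[U]$, and your completeness argument is a non sequitur. You list a rectangle $B$ only when $\overline{B}\subseteq G_\gamma$ for a \emph{single} admissible curve $\gamma$, whereas your compactness argument only yields $\overline{B}\subseteq G_{\gamma_1}\cup\cdots\cup G_{\gamma_m}$ for finitely many curves; these are not the same, and the difference is real. Take $f(z)=z+1/z$ on $U=\C-\{0\}$, so $f[U]=\C$, and take $B=(-3,3)^2$. The compact set $K=f^{-1}\left[\overline{B}\right]$ contains the unit circle (there $f(z)=2\,\mbox{Re}(z)\in[-2,2]$), and $\XTC-K$ has exactly two components $D_0\ni 0$ and $D_\infty\ni\infty$, each simply connected: they are the two sheets of $f^{-1}\left[\XTC-\overline{B}\right]$, since $\XTC-\overline{B}$ is simply connected and omits the critical values $\pm 2$ of this degree-two rational map. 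Now let $\gamma$ be admissible with $f(\gamma)\cap\overline{B}=\emptyset$, i.e.\ $\gamma\cap K=\emptyset$. If $\gamma\subseteq D_0$, then $\Int(\gamma)\subseteq D_0$ misses $K$. If $\gamma\subseteq D_\infty$, then the compact connected set $\XTC-D_\infty\supseteq K\cup\{0\}$ lies entirely inside or outside $\gamma$; inside is forbidden because admissibility requires $0\notin\overline{\Int(\gamma)}$, and outside again gives $\Int(\gamma)\cap K=\emptyset$. Either way no point of $\overline{B}$ is attained in $\Int(\gamma)$, so $\eta(f\circ\gamma;\cdot)=0$ on $\overline{B}$ and $\overline{B}\not\subseteq G_\gamma$. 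Thus $(-3,3)^2$ is never output even though its closure lies in $f[U]$. Note that poles inside $U$ play no role here: what blocks the curves is the puncture of $U$ at $0$, around which no admissible curve may wind, and no certificate tied to a single Jordan curve can see past this.

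What you have built is exactly what the paper calls an \emph{incomplete name}, and the repair is the paper's closing step, which your argument omits: interleave a saturation stage in which, whenever basic sets $B_1,\ldots,B_k$ have appeared in your output (from either stream), you also begin outputting every basic set whose closure is contained in $B_1\cup\cdots\cup B_k$; this is semi-decidable by compactness, sound because each $\overline{B_j}\subseteq f[U]$, and it restores completeness, since any $\overline{B}\subseteq f[U]$ is compact and, by the covering property you \emph{did} establish (every point of $f[U]$ lies in some listed set), is covered by finitely many listed sets and so is eventually output. In fact you already use precisely this idea for the neighborhoods of $\infty$, where you certify $\overline{D_{1/r}(0)}\subseteq g[U]$ by covering that disk with finitely many rectangles enumerated for $g=1/f$; you only need to apply the same device to the finite rectangles themselves. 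With that one addition your proof is correct and is in substance the paper's proof: local capture of the image (there via Hertling's theorem on pole-free basic neighborhoods, here re-derived via the argument principle), the reciprocal trick for values near $\infty$ (there zero-free neighborhoods and $1/f$, here $g=1/f$), and then saturation to complete the name.
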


\begin{proof}
From a name of $f$ and a name of an open $U \subseteq \dom(f)$, we can compute
a name of the restriction of $f$ to $U$.  It thus suffices to show that we can 
compute a name of $\ran(f)$.

Since the poles and zeros of a meromorphic function are isolated, for every $z \in \dom(f)$ there is a basic neighborhood of $z$ whose closure is contained in $\dom(f)$ and 
whose closure is either pole-free or zero-free.  Using the name of $f$, we can build a list
of the basic neighborhoods whose closures are zero-free and contained in 
$\dom(f)$.  We can also build a list of the basic neighborhoods whose closures are pole-free and contained in 
$\dom(f)$.  We scan these lists as we build them, and do the following.  Suppose 
$V$ is a pole-free neighborhood whose closure is contained in $\dom(f)$.
We can then apply the Computable Open Mapping Theorem of Hertling
\cite{Hertling.paper.0} and begin listing all finite basic neighborhoods 
whose closures are contained in $f[V]$ as we go along.  Suppose 
$V$ is zero-free.  Again, using Hertling's Computable Open Mapping Theorem, 
we can begin listing all finite basic neighborhoods whose closures are 
contained in $\frac{1}{f}[V]$.  We can then also list all basic neighborhoods whose
closures are contained in the image of $\frac{1}{z}$ on this set.
We can work these neighborhoods into our output list as we go along.

What we will produce by this process is a list of basic neighborhoods
$V_0, V_1, \ldots$ such that 
\[
\ran(f) = \bigcup_j \overline{V_j}.
\]
However, it may be the case that not every basic neighborhood $V$ with 
$\overline{V} \subseteq \ran(f)$ will appear in this list.  What we have formed here
so far is known as an \emph{incomplete name}.  However, it is quite easy to remedy 
the situation.  Whenever basic neighborhoods $U_1, \ldots, U_k$ are listed, we begin working into our list all 
basic neighborhoods contained in $\bigcup_j U_j$.  It follows from the compactness
of $\XTC$ that the resulting list is complete.
\end{proof}

%

\begin{theorem}[\bf Extended Computable Closed Mapping Theorem]
From a name of a meromorphic $f$ and a name of a closed set contained in its domain, $C$,
one can compute a name of $f[C]$.
\end{theorem}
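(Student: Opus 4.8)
The plan is to compute a name of $f[C]$ by listing exactly those basic open sets that intersect the image.  Recall that a name of a closed set is a list of all basic open sets that meet it, so it suffices to enumerate, with no omissions, the basic neighborhoods $V$ for which $V \cap f[C] \neq \emptyset$.  First I would observe that, unlike the open case, the set $C$ need not be contained in a single nice neighborhood, and $f[C]$ need not be open; so the Extended Computable Open Mapping Theorem cannot be applied directly.  Instead I would exploit the duality between the two naming systems together with the fact that on $\XTC$ (which is compact) the complement of a basic open set is again manageable.

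The key steps, in order, would be the following.  Begin by computing a name of the restriction of $f$ to a suitable open neighborhood of $C$; since the name of $C$ lists basic open sets meeting $C$ and we can recover a name of the open set $\dom(f)$ on which $f$ is meromorphic, we may work inside $\dom(f)$.  Next, to test whether a given basic neighborhood $V$ satisfies $V \cap f[C] \neq \emptyset$, I would note that this holds precisely when $f^{-1}[V] \cap C \neq \emptyset$.  The set $f^{-1}[V]$ is open, and from a name of $f$ one can semi-decide membership of basic neighborhoods in it; simultaneously, the name of $C$ lets us semi-decide whether an open set meets $C$.  Concretely, I would enumerate all basic neighborhoods $W$ with $\overline{W} \subseteq \dom(f)$ such that $\overline{W}$ meets $C$ (read off from the name of $C$) and such that $f[\overline{W}]$ is seen to intersect $V$; for such $W$ we are guaranteed $V \cap f[C] \neq \emptyset$, and conversely every intersection is witnessed by some such $W$ because $f$ is continuous and the basic neighborhoods form a basis.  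Listing all $V$ for which such a witness $W$ appears yields a list that is contained in a correct name.

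As in the open-mapping proof, the process just described produces an \emph{incomplete name}: it will eventually list every basic $V$ meeting $f[C]$, but perhaps not the first time a witness could in principle be detected, and the enumeration order is not controlled.  The main obstacle is therefore not correctness of membership but ensuring completeness and the positive (semi-decidable) character of the test, since whether $f[\overline{W}]$ meets an open $V$ must be recognized from approximations alone.  I would remedy the incompleteness exactly as before, by invoking compactness of $\XTC$: whenever finitely many basic neighborhoods have been placed on the output, I would work in all basic neighborhoods whose closures are covered by what has been listed, and the compactness of $\XTC$ guarantees the resulting list is a complete name of $f[C]$.
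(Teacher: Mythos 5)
Your overall strategy---enumerate the basic neighborhoods $V$ meeting $f[C]$ by searching for witnessing basic neighborhoods $W$ that meet $C$ and whose image under $f$ relates suitably to $V$---has the same shape as the paper's argument, but two of your concrete steps are unsound as written. First, your witnessing condition uses intersection where it must use containment. From ``$\overline{W}$ meets $C$ and $f[\overline{W}]$ intersects $V$'' you cannot conclude $V \cap f[C] \neq \emptyset$: the point of $\overline{W}$ that $f$ sends into $V$ need not lie in $C$. Take $f$ the identity, $C = \{0\}$, $W$ the unit disk, and $V$ a small disk about $1/2$; both hypotheses hold, yet $V \cap f[C] = \emptyset$, so your procedure would list a neighborhood missing $f[C]$ and the output would not be a name of $f[C]$. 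The repair is exactly what your earlier sentence about semi-deciding membership of basic neighborhoods in $f^{-1}[V]$ suggests: demand $f[\overline{W}] \subseteq V$ (equivalently $\overline{W} \subseteq f^{-1}[V]$), which is still semi-decidable from a machine name of $f$ and is sound, since any $z \in W \cap C$ then satisfies $f(z) \in V \cap f[C]$. This containment-plus-hitting pair is precisely the condition the paper scans for.

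Second, your completeness repair is borrowed from the wrong representation. Closing under ``closures covered by what has been listed'' is correct for names of \emph{open} sets, where a listed basic set certifies that its closure lies inside the set, so anything covered by listed sets lies inside as well. For \emph{closed} sets the name lists basic sets that \emph{hit} $f[C]$, and a basic neighborhood whose closure is covered by finitely many hitting neighborhoods need not itself hit $f[C]$ (each listed neighborhood may meet $f[C]$ only outside it); so this step would again place non-hitting neighborhoods on the output. The completion appropriate to closed-set names is upward closure: whenever $U$ is listed, also work in every basic neighborhood containing $U$. This is sound (a superset of a hitting set hits) and gives completeness, because any basic $U'$ meeting $f[C]$ at some $f(z)$ with $z \in C$ contains a smaller basic $U$ for which a pair $(W,U)$ with $z \in W$ and $f[\overline{W}] \subseteq U$ is eventually discovered. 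With these two corrections your argument becomes essentially the paper's proof.
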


\begin{proof}
Begin scanning the names of $f$ and $C$ simultaneously.  Suppose we discover a pair 
$(V,U)$ such that $f[V] \subseteq U$ and such that $V \cap C \neq \emptyset$.  We can 
then list $U$ as a basic neighborhood that hits $f[C]$.  At the same time, 
we work into our list all basic neighborhoods that contain $U$. 
It follows that the resulting list is a name of $f[C]$.
\end{proof}

\section{Computing the sequences in the Koebe construction}\label{sec:KOEBE}

Let $D$ be a non-degenerate $n$-connected domain that contains $\infty$ but not $0$.  
We inductively define sequences 
$\{D_k\}_{k = 0}^\infty$, $\{D_{k,1}\}_{k = 0}^\infty$, $\ldots$, $\{D_{k,n}\}_{k = 0}^\infty$, and $\{f_k\}_{k = 0}^\infty$ as follows.  

To begin, let $D_{0,1}, \ldots, D_{0,n}$ be the connected components of $\XTC - D$.    
Let $D_0 = D$.  Let $f_0 = Id_{D}$.

Let $k \in \N$, and suppose $f_k, D_k, D_{k, 1}, \ldots, D_{k,n}$ have been defined. 
Let $k' \in \{1, \ldots, n\}$ be equivalent to $k+1$ modulo $n$.  
Let $f_{k+1}$ be the conformal map of $\XTC - D_{k, k'}$ onto 
a circular domain $C$ such that $f_{k+1}(z) = z + O(z^{-1})$.  
Now, let $D_{k+1} = f_{k+1}[D_k]$.  Let $D_{k+1, j} = f_{k+1}[D_{k,j}]$ when 
$j \neq k'$.  Let $D_{k+1, k'} = \XTC - C$.  

Let $g_k = f_k \circ \ldots \circ f_0$.  

%
 %
%
%
%
%
 %
%
In \cite{Koebe.1912}, P. Koebe outlines a proof of the following.

\begin{theorem}
The sequence $\{g_k\}_{k \in \N}$ converges uniformly to $f_D$.
\end{theorem}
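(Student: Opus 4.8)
The plan is to combine a normal-families compactness argument with the uniqueness already furnished by Theorem~\ref{thm:KOEBE}, the geometric core being the verification that the limiting image domain is circular. First I would check that the maps compose well. Since each $f_{k+1}$ fixes $\infty$ and satisfies $f_{k+1}(w)=w+O(w^{-1})$ there, an easy induction shows that every $g_k$ is univalent on $D$ with $g_k(z)=z+O(z^{-1})$ at $\infty$; in particular $g_k(\infty)=\infty$ and $g_k'(\infty)=1$. Set $D_k=g_k[D]$ and $E_k=\XTC-D_k$ for the union of the holes. Because $f_{k+1}$ restricts to a conformal bijection of $D_k$ onto $D_{k+1}$ that is normalized at $\infty$, the logarithmic capacity of $E_k$ is \emph{independent of $k$}. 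This conserved capacity, together with the normalization, confines all the holes to a single fixed disk (via the standard coefficient and area bounds for normalized univalent maps), so that $\{g_k\}$ is uniformly bounded on compact subsets of $D$ and uniformly near $\infty$.

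By Montel's theorem $\{g_k\}$ is then a normal family. Let $g$ be the locally uniform limit of any convergent subsequence. By Hurwitz's theorem $g$ is univalent or constant, and the normalization $g(z)=z+O(z^{-1})$ rules out the constant case; thus $g$ is a conformal map of $D$ onto a domain $D_\infty$ (the Carath\'eodory kernel of the $D_k$ along the subsequence) again with $g(z)=z+O(z^{-1})$.

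The crux, and the step I expect to be the main obstacle, is to show that $D_\infty$ is circular, i.e. that each of its holes is a round disk. For the normalized conformal map of the exterior of a single continuum $K$ onto the exterior of a disk of radius $\rho=\mbox{cap}(K)$, the area theorem gives $\mbox{area}(K)=\pi(\rho^2-\sum_{n\geq 1} n|a_n|^2)$, so the defect $\Delta(K):=\pi\,\mbox{cap}(K)^2-\mbox{area}(K)$ is nonnegative and vanishes exactly when $K$ is a disk. Writing $\Delta_k=\sum_j \Delta(D_{k,j})$, the osculation step drives the active component's defect to zero, $\Delta(D_{k+1,k'})=0$, while only perturbing the remaining components. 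The hard part is to control this perturbation: one must show that rounding one hole disturbs the already-rounded, distant holes only mildly---by distortion estimates that decay with the mutual separation of the components, which stays bounded below because $\mbox{cap}(E_k)$ is conserved---so that a bounded monotone conformal functional forces $\Delta_k\to 0$. Together with the fact that the cyclic schedule $k'\equiv k+1\pmod n$ circularizes \emph{every} component infinitely often, this yields that each hole of $D_\infty$ is a disk. This is precisely Koebe's osculation estimate, and it is where the several-hole geometry genuinely enters.

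Finally, once $D_\infty$ is known to be circular, $g$ is a normalized conformal map of $D$ onto a circular domain, so the uniqueness clause of Theorem~\ref{thm:KOEBE} forces $g=f_D$ and $D_\infty=C_D$. Since every subsequential limit equals $f_D$, the full sequence $g_k$ converges to $f_D$ locally uniformly on $D$; the uniform control near $\infty$ supplied by the normalization then upgrades this to uniform convergence in the spherical metric, as claimed.
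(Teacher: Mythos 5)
Your soft skeleton (normality, Hurwitz, kernel convergence, then uniqueness from Theorem~\ref{thm:KOEBE}) is sound as far as it goes, and several of your preliminary claims are correct or easily repaired: $\mathrm{cap}(\XTC - D_k)$ is indeed conserved (Green's function invariance under the normalization $z + O(z^{-1})$), the holes stay inside a fixed disk by the area theorem, and local boundedness of $\{g_k\}$ follows by applying the maximum principle to $g_k(z) - z$, which is holomorphic on $D$ and vanishes at $\infty$. The genuine gap is the step you yourself flag as ``the main obstacle'': you never prove that the limit domain is circular, i.e.\ that $\Delta_k \to 0$, and the two assertions your sketch leans on are unjustified. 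First, conservation of the total capacity of $\XTC - D_k$ does \emph{not} bound the mutual separation of the holes from below; nothing in your argument rules out holes drifting together along the iteration. Second, no ``bounded monotone conformal functional'' is exhibited, and the defect sum $\Delta_k$ is not monotone: the osculation map that rounds one hole distorts the already-rounded holes and can increase their defects. Controlling exactly this interaction is the substance of Koebe's argument (and of Gaier's quantitative version), so in calling the missing step ``precisely Koebe's osculation estimate'' you have cited the theorem rather than proved it. For the record, the paper does the same thing openly: it attributes the proof to \cite{Koebe.1912}, and its working tool is the explicit error bound of Theorem~\ref{thm:ERROR.KOEBE} from \cite{Andreev.McNicholl.2011}, which yields convergence at a geometric rate and subsumes the present statement.

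There is a second gap at the very end. Montel--Hurwitz can only ever give \emph{locally} uniform convergence on $D$, and your claim that the normalization at $\infty$ upgrades this to uniform convergence is false as stated: the obstruction is the behavior near $\partial D$, not near $\infty$. The differences $g_k - f_D$ are holomorphic on $D$ and vanish at $\infty$, so by the maximum principle their supremum over $D$ is governed by their boundary values, about which locally uniform convergence says nothing (compare $z^k$ on $\D$: locally uniformly null, yet of sup norm $1$). Confinement of the holes only yields $\sup_D |g_k - f_D| \leq 2R$, a bound that does not shrink. Uniform convergence on $D$ --- which is what the theorem asserts --- genuinely requires quantitative boundary estimates of the kind provided by Gaier, Henrici, or Theorem~\ref{thm:ERROR.KOEBE}; it does not follow from the soft compactness argument.
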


Bounds on the rate of convergence of $\{g_k\}_{k \in \N}$ were computed by 
P. Henrici \cite{Henrici.1986} and D. Gaier \cite{Gaier.1959}.  However, these bounds
are stated in terms of quantities associated with the circular domain $C_D$.  In \cite {Andreev.McNicholl.2011}, 
Andreev and McNicholl give bounds which are stated solely in terms of 
quantities associated with the domain $D$.  The results in this paper are based on these bounds.

We now show that the sequences $\{D_k\}_k$, $\{f_k\}_k$, and $\{\partial D_{k,j}\}_{k,j}$ 
generated from the Koebe construction can be computed from the initial data 
$D, \partial D, n$.  Our first task is to show that from these initial data we can compute
the boundary components $\partial D_{0,1}, \ldots, \partial D_{0,n}$.  
At first sight, this may seem obvious as it may seem that we can simply look at the boundary of $D$ and determine the component boundaries.  However, our initial data
do not specify the entire boundary of $D$ (which may not even be given by Jordan curves) all at once; they merely give us a sequence
of approximations to the boundary, and we must sort these into approximations to the individual components.  We also need to show that we can ``cover-up" components using these initial data.  Mathematically, this means computing the complements 
of the individual components of the complement of $D$. 

\begin{theorem}\label{thm:DECOMP}
From a name of a finitely connected domain $D$ that contains $\infty$ but not $0$, a name of $\partial D$, and the number of connected components of $\XTC - D$, it is possible to uniformly compute names of the complements of $D_{0,1}$, $\ldots$, $D_{0, n}$ as well as names of their boundaries.
\end{theorem}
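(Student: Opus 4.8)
The plan is to exploit the hypothesis that we are told the number $n$ of complementary components, which is precisely the information needed to certify when a finite approximation has correctly separated $\XTC - D$. First I would observe that $D_{0,1},\dots,D_{0,n}$ are pairwise disjoint continua lying in $\C$ (none contains $\infty$, which is interior to $D$), so there is a $\delta>0$ with $d(D_{0,i},D_{0,j})\geq\delta$ for $i\neq j$. I would then search, in parallel, over all $n$-tuples $(V_1,\dots,V_n)$ of finite unions of rational rectangles satisfying: (i) $\overline{V_i}\cap\overline{V_j}=\emptyset$ for $i\neq j$; (ii) $\XTC-\bigcup_j V_j\subseteq D$; and (iii) $V_j\cap\partial D\neq\emptyset$ for each $j$. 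Condition (i) is decidable; (ii) is verifiable from the name of $D$ by covering the compact set $\XTC-\bigcup_j V_j$ with finitely many basic neighborhoods whose closures lie in $D$; and (iii) is verifiable from the name of $\partial D$ by finding, for each $j$, a basic neighborhood listed in that name and contained in $V_j$. Taking thin rational-rectangle neighborhoods of the $D_{0,j}$ shows such a tuple exists, so the search halts.

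The key point is that any such tuple already isolates the components. Since $\overline{V_i}\cap\overline{V_j}=\emptyset$, each $V_j$ is relatively clopen in $\bigcup_i V_i$, so each connected $D_{0,i}\subseteq\XTC-D\subseteq\bigcup_j V_j$ lies in a single $V_{\pi(i)}$; condition (iii) then forces $\pi$ to be onto, hence a bijection, so each $V_j$ contains exactly one component. I would fix one such tuple once and for all and relabel so that $D_{0,j}$ is the component inside $V_j$. This single choice anchors a consistent labelling across all $n$ outputs, which is essential since the Koebe construction refers to the components by index.

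To output a name of the open set $\XTC-D_{0,j}$ I would enumerate every basic $B$ for which a \emph{compatible refinement} $(V_1',\dots,V_n')$ exists, meaning one satisfying (i)--(iii) together with $\overline{V_i'}\subseteq V_i$ for all $i$ and $\overline{B}\cap\overline{V_j'}=\emptyset$. The containment $\overline{V_i'}\subseteq V_i$ is decidable and forces the refined labelling to agree with the fixed one, since a component inside $V_i'\subseteq V_i$ can only be $D_{0,i}$; hence $D_{0,j}\subseteq V_j'$ and so $\overline{B}\cap D_{0,j}=\emptyset$, making every enumerated $B$ a legitimate entry with $\overline{B}\subseteq\XTC-D_{0,j}$. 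Conversely, if $\overline{B}\cap D_{0,j}=\emptyset$ then, the two sets being disjoint compacta, thin neighborhoods of the components furnish such a refinement, so $B$ is eventually listed. For the boundaries I would use the identity $\partial D_{0,j}=\partial D\cap V_j$, which holds because $\partial D\subseteq\bigcup_i D_{0,i}$, because each $D_{0,i}$ with $i\neq j$ misses $\overline{V_j}$, and because $\partial D_{0,j}=\partial D\cap D_{0,j}\subseteq V_j$; I would then enumerate a basic $B$ into the name of $\partial D_{0,j}$ exactly when the name of $\partial D$ lists some basic neighborhood contained in $B\cap V_j$.

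The main obstacle is the separation step and its justification: without the number $n$ one cannot certify that a finite list of approximations has resolved $\XTC-D$ into the correct number of pieces, because an unresolved near-touching of two components is indistinguishable, at any finite stage, from a single component. The crux is therefore the pigeonhole argument combining connectedness with the disjoint-closure condition, together with the device of anchoring labels through the refinement condition $\overline{V_i'}\subseteq V_i$; these are what let me convert the single guarantee ``there are $n$ components'' into consistently labelled names of the individual complements and boundaries.
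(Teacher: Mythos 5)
Your proof is correct, and it rests on the same key insight as the paper's: the given number $n$ is what allows one to certify, by an unbounded search, that $n$ disjoint open ``separators,'' each meeting $\partial D$, have isolated the components of $\XTC - D$ --- connectedness plus pigeonhole then gives exactly one component per separator, and all subsequent names are produced by further searches anchored to that one fixed choice. The difference is in the machinery, and it is worth recording. The paper's separators are mutually exterior rational polygonal Jordan curves $\gamma_1, \ldots, \gamma_n$ together with rectangles $S_j \subseteq \Int(\gamma_j)$ meeting $\partial D$, so its pigeonhole argument runs through interiors and exteriors of Jordan curves; it also tacitly needs $\gamma_j \subseteq D$ (else a component could cross a curve), a condition its displayed search omits, and its enumeration of $\XTC - D_{0,j}$ requires semi-deciding an inclusion $\partial D_{0,j} \subseteq \bigcup_k R_k$, which takes a further reduction because the name of a closed set supplies only positive information (one must instead verify $(\gamma_j \cup \Int(\gamma_j)) \setminus \bigcup_k R_k \subseteq D$ from the name of $D$). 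Your rectangle-union formulation buys simplicity and transparency: disjointness of closures, your covering condition (ii) --- which plays the anti-leakage role of $\gamma_j \subseteq D$ explicitly --- and the ``compatible refinement'' test are each visibly decidable or semi-decidable from the given names, and no Jordan-curve topology is needed, only the clopen decomposition of $\bigcup_i V_i$. What the paper's version buys is geometric directness: its curves come straight from the exhaustion of $D$ by Jordan domains with which it begins, and the separating polygonal curve $P$ is the natural geometric certificate that a basic set avoids $D_{0,j}$. Your boundary enumeration is essentially identical to the paper's, with $V_j$ in place of $\Int(\gamma_j)$.
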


\begin{proof}
Let $n$ denote the number of boundary components of $D$.  There is a sequence of $n$-connected and unbounded Jordan domains $\Omega_1, \Omega_2, \ldots$ such that 
\begin{eqnarray*}
\overline{\Omega_j} & \subseteq & D\\
\overline{\Omega_{j+1}} & \subseteq & \Omega_j\\
D & = & \bigcup_j \Omega_j
\end{eqnarray*}
It follows that there are Jordan curves $\gamma_1, \ldots, \gamma_n$ such that for each $j \in \{1, \ldots, n\}$ 
\begin{eqnarray*}
D_{0,j} & \subseteq & \Int(\gamma_j)\\
\gamma_j & \subseteq & \bigcap_{k \neq j} \Ext(\gamma_k).
\end{eqnarray*}
Every Jordan curve can be approximated with arbitrary precision by a rational polygonal Jordan curve.  Hence, we can additionally assume that each $\gamma_j$ is a rational polygonal curve.

So, we first search for rational polygonal Jordan curves $\gamma_1$, $\ldots$, $\gamma_n$ and rational rectangles $S_1, \ldots, S_n$ such that 
\begin{eqnarray*}
\gamma_j & \subseteq & \bigcap_{k \neq j} \Ext(\gamma_k)\\
\overline{S_j} & \subseteq & \Int(\gamma_j)\mbox{, and}\\
\emptyset & \neq & S_j \cap \partial D
\end{eqnarray*}
It follows that each $\gamma_j$ contains in its interior exactly one connected component of $\XTC - D$; label this component $D_{0, j}$.

We then list a basic open set $S$ as one that intersects $\partial D_{j,0}$ just in case there is a rational rectangle $R$ such that $\overline{R}  \subseteq  S \cap \Int(\gamma_j)$ and $R \cap \partial D \neq \emptyset$.

We now compute a name of $\XTC - D_{0, j_0}$.  If $z \in \XTC - D_{0,j}$, then there is an $\Omega_k$ one of whose boundary curves has the property that $z$ belongs to its exterior and also that $D_{0,j}$ is contained in its interior.  It follows that there is a basic open set $S$ and a rational polygonal Jordan curve $P$ such that $z \in S \subseteq \overline{S} \subseteq \Ext(P)$, and such that $\partial D_{0,j} \subseteq \Int(P)$.

So, we list a basic set $S$ as one whose closure is contained in the complement of $D_{0,j}$ if we find a rational polygonal curve $P$ and rational rectangles $R_1, \ldots, R_m$ such that 
$\overline{S} \subseteq \Ext(P)$ and such that 
\[
\partial D_{0,j} \subseteq \bigcup R_k \subseteq \bigcup \overline{R_k} \subseteq \Int(P).
\]
\end{proof}

The following now follows from Theorem \ref{thm:DECOMP}, Theorem \ref{thm:1.CONNECTED}, and primitive recursion (see, \emph{e.g.}, Theorem  2.1.14 of \cite{Weihrauch.2000}).

\begin{theorem}\label{thm:KOEBE.COMP}
From a name of a finitely connected and non-degenerate domain $D$ that contains $\infty$ but not $0$, a name of $\partial D$,  the number of connected components of $\XTC - D$, a $k \in \N$, and a $j \in \{1, \ldots, n\}$, it is possible to uniformly compute names of $f_k$, $D_k$, and $\partial D_{k,j}$.
\end{theorem}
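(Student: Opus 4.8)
The plan is to prove Theorem \ref{thm:KOEBE.COMP} by induction on $k$, using Theorem \ref{thm:DECOMP} to handle the base case and a single step of the Koebe construction as the inductive step, then invoke primitive recursion to make the whole procedure uniform. Since the theorem asserts a \emph{uniform} computation from the initial data $(D, \partial D, n)$ together with the indices $k$ and $j$, the natural strategy is to exhibit, for each fixed $k$, a procedure that computes names of $f_k$, $D_k$, and $\partial D_{k,j}$ from names of $f_{k-1}$, $D_{k-1}$, and the boundaries $\partial D_{k-1,1}, \ldots, \partial D_{k-1,n}$, and then appeal to the recursion theorem (Theorem 2.1.14 of \cite{Weihrauch.2000}) to close the loop.

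First I would establish the base case $k = 0$. Here $f_0 = \mathrm{Id}_D$, whose name is trivially computable, $D_0 = D$ is given, and the names of $\partial D_{0,1}, \ldots, \partial D_{0,n}$ together with names of the complements of the individual components $D_{0,1}, \ldots, D_{0,n}$ are supplied directly by Theorem \ref{thm:DECOMP}. For the inductive step, suppose at stage $k$ we have in hand names of $D_k$, of each boundary $\partial D_{k,j}$, and of the complements $\XTC - D_{k,j}$. Let $k'$ be the residue of $k+1$ modulo $n$ as in the construction. The domain $\XTC - D_{k,k'}$ is a non-degenerate $1$-connected domain containing $\infty$ but not $0$ (after an elementary normalization if necessary, since the Koebe step is applied to the complement of a single component), so Theorem \ref{thm:1.CONNECTED} yields names of the conformal map $f_{k+1}$ onto its circular domain $C$, of $C$ itself, and of $\partial C$. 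This gives us the name of $f_{k+1}$ required by the theorem.

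It then remains to propagate the images. I would compute $D_{k+1} = f_{k+1}[D_k]$ and $D_{k+1,j} = f_{k+1}[D_{k,j}]$ for $j \neq k'$ by applying the Extended Computable Open Mapping Theorem (Theorem \ref{thm:ECOMT}) to the name of $f_{k+1}$ and the names of the open sets $D_k$ and $D_{k,j}$; note that $f_{k+1}$ is conformal, hence meromorphic and non-constant, so the hypotheses of Theorem \ref{thm:ECOMT} are met. For the new component $D_{k+1,k'} = \XTC - C$, its name and the name of its boundary $\partial D_{k+1,k'} = \partial C$ come directly from Theorem \ref{thm:1.CONNECTED}. To obtain names of the remaining boundaries $\partial D_{k+1,j} = f_{k+1}[\partial D_{k,j}]$ for $j \neq k'$, I would apply the Extended Computable Closed Mapping Theorem to the name of $f_{k+1}$ and the names of the closed sets $\partial D_{k,j}$. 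Thus all of the stage-$(k+1)$ data are computed uniformly from the stage-$k$ data.

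The main obstacle I anticipate is not any single one of these appeals but rather the bookkeeping needed to make the recursion genuinely uniform: the data carried from one stage to the next is a finite tuple of names (of $f_k$, $D_k$, the boundaries, and the complements), and one must verify that the single-step transformation described above is itself a computable operation on such tuples so that primitive recursion applies cleanly. A secondary subtlety is the normalization required to present $\XTC - D_{k,k'}$ to Theorem \ref{thm:1.CONNECTED} in the exact form it demands (containing $\infty$ but not $0$); this is a routine affine adjustment, computable from the given names, but it must be tracked so that the images $f_{k+1}[D_k]$ are correctly interpreted in the next stage. Once the single step is packaged as a computable tuple-to-tuple map, the conclusion follows by invoking the recursion theorem exactly as cited.
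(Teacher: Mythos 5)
Your skeleton (Theorem \ref{thm:DECOMP} for the base case, Theorem \ref{thm:1.CONNECTED} plus the mapping theorems for one Koebe step, then primitive recursion) is the paper's approach, but your inductive step does not close, and it fails at precisely the point the paper introduces Theorem \ref{thm:DECOMP} to handle. Your inductive hypothesis carries names of the complements $\XTC - D_{k,j}$ --- and it must, since the next stage needs a name of the \emph{domain} $\XTC - D_{k+1,k''}$ (where $k'' \equiv k+2 \bmod n$) and of its boundary in order to apply Theorem \ref{thm:1.CONNECTED} again --- but your step never produces names of the complements $\XTC - D_{k+1,j}$. The one place you address these sets, namely computing ``$D_{k+1,j} = f_{k+1}[D_{k,j}]$'' by applying Theorem \ref{thm:ECOMT} to ``the names of the open sets $D_k$ and $D_{k,j}$,'' is a type error: the sets $D_{k,j}$ are compact connected components of $\XTC - D_k$, not open sets, so Theorem \ref{thm:ECOMT} does not apply to them. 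Nor can one instead push forward the complements: $f_{k+1}$ is undefined on $D_{k,k'} \subseteq \XTC - D_{k,j}$, so $f_{k+1}\left[(\XTC - D_{k,j}) \cap \dom(f_{k+1})\right] = C - D_{k+1,j}$, which omits the entire closed disk $\XTC - C$ and hence is not $\XTC - D_{k+1,j}$. Finally, the names of $\partial D_{k+1,j}$ that your appeal to the Closed Mapping Theorem correctly supplies give only positive information (basic sets that \emph{hit} these boundaries), which is not enough to enumerate basic sets whose closures \emph{miss} $D_{k+1,j}$; so the complements cannot be recovered from what your step actually computes.

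The repair is to invoke Theorem \ref{thm:DECOMP} inside the recursion step, not only at $k = 0$: from names of $D_{k+1} = f_{k+1}[D_k]$ (Theorem \ref{thm:ECOMT}), of $\partial D_{k+1} = \partial C \cup \bigcup_{j \neq k'} f_{k+1}[\partial D_{k,j}]$ (Closed Mapping Theorem, merging the lists), and the number $n$, Theorem \ref{thm:DECOMP} yields names of the complements of the components of $\XTC - D_{k+1}$ and of their boundaries; the individually computed closed sets $f_{k+1}[\partial D_{k,j}]$ then let you match the arbitrary labelling produced by Theorem \ref{thm:DECOMP} with the labelling $D_{k+1,j}$ dictated by the construction. (Alternatively, one can build $\XTC - D_{k+1,j}$ directly as the union of $f_{k+1}\left[(\XTC - D_{k,j}) \cap (\XTC - D_{k,k'})\right]$ with a computably located disk slightly larger than $\XTC - C$, but some such argument must be supplied.) Note also that the hypothesis ``contains $\infty$ but not $0$'' must be arranged by a translation at \emph{every} stage, both for Theorem \ref{thm:DECOMP} and for Theorem \ref{thm:1.CONNECTED}; you flag this for the latter but it recurs throughout the induction.
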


\section{Bounding the error in the Koebe construction}\label{sec:BOUNDING}\label{sec:BOUND}

In this section we summarize the results of Andreev and McNicholl on the error in the Koebe construction \cite{Andreev.McNicholl.2011}.  To this end, we first review some harmonic function theory.

Suppose $u,v$ are harmonic functions on a domain $D$ and 
\begin{eqnarray}
\frac{\partial u}{\partial x} & = & \frac{\partial v}{\partial y}\label{eqn:CR.1}\\
\frac{\partial u}{\partial y} & = & -\frac{\partial v}{\partial x}\label{eqn:CR.2}
\end{eqnarray}
Then, $v$ is said to be a \emph{harmonic conjugate} of $u$.  Equations 
(\ref{eqn:CR.1}) and (\ref{eqn:CR.2}) are known as the \emph{Cauchy-Riemann equations}.
It is well-known that $v$ is a harmonic conjugate of $u$ if and only if 
$u + iv$ is analytic.  If $D$ is simply connected, then it follows that $u$ has a harmonic conjugate and that all of its harmonic conjugates differ by a constant.  However, if $D$ is multiply connected, then $u$ may not have a harmonic conjugate \cite{Nehari.1952}.

Suppose $u$ is harmonic on a domain $D$ and that $\gamma$ is a smooth arc or Jordan curve that is contained in $D$.  The \emph{normal derivative} of $u$ is denoted $\frac{\partial u}{\partial \normal}$ and is 
defined by the equation
\[
\frac{\partial u}{\partial \normal}(t) =   \left(\frac{\partial u}{\partial x}(\gamma(t)) \gamma_2'(t) - \frac{\partial u}{\partial y}(\gamma(t)) \gamma_1'(t)\right) \frac{1}{|\gamma'(t)|}
\]
Intuitively, the normal derivative of $u$ is the rate of change of $u$ in the direction of the vector that is perpendicular to the tangent vector of $\gamma$ and is to the right as one traverses $\gamma$ in the 
direction of the given parameterization.
We also define
\[
\frac{\partial u}{\partial s} = \left(\frac{\partial u}{\partial x} x'(t) + \frac{\partial u}{\partial y} y'(t) \right)  \frac{1}{|x'(t) + i y'(t)|}.
\]
The Cauchy-Riemann equations can now be rewritten as
\begin{eqnarray*}
\frac{\partial u}{\partial \normal} & = & \frac{\partial v}{\partial s}\\
\frac{\partial v}{\partial \normal} & = & -\frac{\partial u}{\partial s}.
\end{eqnarray*}
We also let $ds$ denote the differential of arc length.  That is, $ds = |\gamma'(t)| dt$.  

Suppose $D$ is a Jordan domain, and let $\Gamma_1, \ldots, \Gamma_n$ denote its boundary curves.  For each $j$, let $\omega(\cdot, \Gamma_j, D)$ be the harmonic function on $D$ determined by the boundary data
\[
f(\zeta) = \left\{ \begin{array}{cc}
					1 & \zeta \in \Gamma_j\\
					0 & \zeta \not \in \Gamma_j\\
					\end{array}
					\right.
\]
This harmonic function is referred to as the \emph{harmonic measure function} of $\Gamma_j$.  

We then let $P_{k,j}^D$ denote the conjugate period of $\omega(\cdot, \Gamma_j, D)$ around $\Gamma_k$.  The matrix $(P_{k,j})_{k,j}$ is known as the \emph{Riemann matrix of $D$} and plays a role in the solution of many problems in harmonic and analytic function theory.  See \emph{e.g.}, \cite{Axler.Bourdon.Wade.2001}, \cite{Schiffer.1950}, \cite{Thurman.1994}.

We can now give the following definition from \cite{Andreev.McNicholl.2011}.

\begin{definition}\label{def:ER}
When $D$ is an unbounded Jordan domain whose boundary curves are denotes $\Gamma_1, \ldots, \Gamma_n$, we let 
\[
\mathcal{E}_R(D) = \min_j \exp\left\{ - \left( \frac{P_{j,j}^D}{\omega(\infty, \Gamma_j, D)^2} + 1\right) R^2 \right\}.
\]
\end{definition}

Fix $j \in \{1, \ldots, n\}$.  We define a number $\lambda_j(D)$ as follows.  Let 
$g$ be a conformal map of the exterior of $\Gamma_j$ onto the interior of $\Gamma_j$.   
Let $E_1 = g[D]$.  Let $E_2$ be the unbounded domain whose boundary components are 
$g[\Gamma_k]$ where $k \in \{1, \ldots, n\} - \{j\}$.  Hence, $E_2$ is an $(n-1)$-connected domain.  Let $E_3 = f_{E_2}[E_1]$.  Therefore, $E_3$ is bounded by the curves
$f_{E_2}[\partial \Gamma_j]$, $f_{E_2}g[\Gamma_1], \ldots, f_{E_2}g[\Gamma_{j-1}], f_{E_2}g[\Gamma_{j+1}], \ldots, f_{E_2}g[\Gamma_n]$.  The last $n-1$ of these curves are circles.  
Set 
\[
M = 2 \max\{ |z|\ :\ z \in \overline{E_3}\}.
\]
Thus, $E_4 =_{df} \frac{1}{M} E_3 \subset \D$.  Let $\mathcal{C}_1, \ldots, \mathcal{C}_{n-1}$ label the circles
\[
\frac{1}{M}f_{E_2}g[\Gamma_1], \ldots, \frac{1}{M}f_{E_2}g[\Gamma_{j-1}], \frac{1}{M}f_{E_2}g[\Gamma_{j+1}], \ldots, \frac{1}{M}f_{E_2}g[\Gamma_n].
\]
We now arrive at the definition of $\lambda_j(D)$.

\begin{definition}\label{def:LAMBDAj}
When $\mathcal{C}_1, \ldots, \mathcal{C}_{n-1}$ are obtained as in the above process, we let
\begin{eqnarray*}
\lambda_j(D) & = & \min_{k_1 \neq k_2} \rho(\mathcal{C}_{k_1}, \mathcal{C}_{k_2}).
\end{eqnarray*}
\end{definition}

The following is proven in \cite{Andreev.McNicholl.2011}.

\begin{theorem}\label{thm:ERROR.KOEBE}
Suppose $D_{2/R}(z_0) \subseteq \XTC - D \subseteq D_{R/2}(0)$.  Let $\mathcal{E}_R = \mathcal{E}_R(D)$.  Then, 
for all $j \in \N$, $|g_j - f_D| < \gamma^+_D (\mu_D^+)^{\lfloor j/n \rfloor}$ where 
\begin{eqnarray*}
\gamma_D^+ & = & 72 R \left[ \frac{1}{\mathcal{E}_R^2 \log( 1+ \frac{1}{4} \mathcal{E}_R^3 \min\{\lambda_1, \lambda_2\})} + 1\right] \frac{\mathcal{E}_R^2 \min\{\lambda_1, \lambda_2\} + 1}{\mathcal{E}_R^3 \min\{\lambda_1, \lambda_2\}}\\
\mu_D^+ & = & \frac{1}{1 + \frac{1}{4} \mathcal{E}_R^3 \min\{\lambda_1, \lambda_2\} }
\end{eqnarray*}
\end{theorem}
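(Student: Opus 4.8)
\emph{Proof proposal.} The plan is to follow the classical strategy of Gaier and Henrici but to replace every quantity that refers to the unknown circular domain $C_D$ by the intrinsic quantities $\mathcal{E}_R(D)$ and $\lambda_j(D)$ of Definition \ref{def:ER} and Definition \ref{def:LAMBDAj}. The first move is to reformulate the error as a deviation from the identity. For each $j$ write $f_D = h_j \circ g_j$, where $h_j = f_D \circ g_j^{-1}$ is the normalized conformal map of $D_j = g_j[D]$ onto its circular domain; because $D_j$ is conformally equivalent to $D$ through the normalized map $g_j$, uniqueness in Theorem \ref{thm:KOEBE} forces this circular domain to be again $C_D$ and $h_j$ to be of the form $w + O(w^{-1})$. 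Since $g_j$ carries $D$ bijectively onto $D_j$,
\[
\sup_{z \in D} |g_j(z) - f_D(z)| = \sup_{w \in D_j} |w - h_j(w)|,
\]
so it suffices to bound how far the normalized map $h_j$ is from the identity. As $h_j = \mathrm{id}$ exactly when all boundary components of $D_j$ are already circles, this supremum measures the residual non-circularity of $D_j$.

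The heart of the argument is a single-cycle contraction estimate: passing from $D_k$ to $D_{k+n}$ (one full pass through all $n$ components) multiplies the residual deviation $\sup_{w}|w - h_k(w)|$ by a factor at most $\mu_D^+$. For the analytic function $w \mapsto h_k(w) - w$, circularizing the $k'$-th component at the next step forces the new residual deviation to vanish on the boundary piece just produced; the two-constants theorem (an application of harmonic measure and the maximum modulus principle) then bounds the new deviation on the remaining components by the old one times a harmonic measure that is strictly less than $1$. The amount by which this harmonic measure falls below $1$ is governed by how well separated the relevant circles are, and their minimal separation is precisely $\min\{\lambda_1,\lambda_2\}$; combining the per-step losses over one full cycle produces the factor $\mu_D^+ = 1/(1 + \tfrac14 \mathcal{E}_R^3\min\{\lambda_1,\lambda_2\})$ once the separation is weighted against a lower bound on the conformal size of the configuration.

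The step that makes these constants depend on $D$ alone — and which I expect to be the main obstacle — is the uniform geometric control of \emph{all} intermediate domains $D_k$ in terms of the fixed initial data. The natural contraction factor at stage $k$ depends on the circles currently inside $D_k$, whose radii and positions drift as the iteration proceeds, so to obtain a single factor valid for every cycle one must bound these radii and separations from below uniformly in $k$. This is where $\mathcal{E}_R(D)$ enters: using the hypothesis $D_{2/R}(z_0) \subseteq \XTC - D \subseteq D_{R/2}(0)$ together with standard harmonic-measure estimates, the expression $\exp\{-(P_{j,j}^D/\omega(\infty,\Gamma_j,D)^2 + 1)R^2\}$ furnishes a lower bound, expressed through the Riemann matrix of $D$, on how close to degeneration any circularized component can come throughout the construction. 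Feeding this lower bound into the maximum-principle contraction of the previous paragraph converts the stage-dependent separations into the fixed quantity $\min\{\lambda_1,\lambda_2\}$ and pins down $\mu_D^+$.

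Finally I would telescope. Writing $j_0 = j - n\lfloor j/n\rfloor$ for the residue within the first cycle and iterating the single-cycle estimate gives
\[
\sup_{z \in D}|g_j(z) - f_D(z)| \le (\mu_D^+)^{\lfloor j/n \rfloor}\,\sup_{w \in D_{j_0}}|w - h_{j_0}(w)|,
\]
and the finitely many within-cycle suprema ($j_0 \in \{0,\dots,n-1\}$) must be absorbed into the prefactor. To bound them by $\gamma_D^+$ I would estimate the initial deviation directly: both $D$ and $C_D$ lie inside $D_{R/2}(0)$ and are normalized, so the deviation is at most a fixed multiple of $R$ times the distortion factors produced by the harmonic-measure estimates, which yields the leading $72R$ and the factor $(\mathcal{E}_R^2\min\{\lambda_1,\lambda_2\}+1)/(\mathcal{E}_R^3\min\{\lambda_1,\lambda_2\})$. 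The logarithmic term $1/(\mathcal{E}_R^2\log(1+\tfrac14\mathcal{E}_R^3\min\{\lambda_1,\lambda_2\}))$ arises when the per-cycle geometric estimate is converted into a single uniform prefactor, via the relation $-\log\mu_D^+ = \log(1+\tfrac14\mathcal{E}_R^3\min\{\lambda_1,\lambda_2\})$. Collecting these constants gives exactly $\gamma_D^+$ and completes the bound.
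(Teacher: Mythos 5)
Your overall plan --- run the Gaier--Henrici convergence argument but replace every quantity attached to the unknown circular domain $C_D$ by intrinsic quantities of $D$ --- is the right one in spirit, but note first that the paper itself contains no proof of Theorem \ref{thm:ERROR.KOEBE}: it is quoted from \cite{Andreev.McNicholl.2011}, which the authors describe precisely as a reworking of the bounds of Gaier \cite{Gaier.1959} and Henrici \cite{Henrici.1986} so that they depend only on quantities associated with $D$. So the comparison here is against that cited work, not an in-paper argument. Your opening reduction is sound: since each $f_k$, hence each $g_j$, has the form $z + O(z^{-1})$ at $\infty$, the map $h_j = f_D \circ g_j^{-1}$ is a normalized conformal map of $D_j$ onto $C_D$, so uniqueness in Theorem \ref{thm:KOEBE} gives $h_j = f_{D_j}$, $C_{D_j} = C_D$, and $\sup_D |g_j - f_D| = \sup_{D_j} |\mathrm{id} - f_{D_j}|$.

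The core of the argument, however, has a genuine gap, and one of its steps is false as stated. You claim that circularizing the $k'$-th component ``forces the new residual deviation to vanish on the boundary piece just produced.'' It does not: $h_{k+1} = f_{D_{k+1}}$ carries the new circle $\partial D_{k+1,k'}$ onto some boundary circle of $C_D$, but it does not fix it pointwise --- that circle is in general not a boundary component of $C_D$ at all, since the later maps $f_{k+2}, f_{k+3}, \ldots$ keep moving it. What the classical argument actually uses is that the deviation is merely \emph{small} (not zero) on recently circularized components, and establishing that smallness quantitatively, without circularity, is the substance of the proof; with the vanishing claim removed, your two-constants step no longer produces a contraction factor. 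The second gap is that the explicit constants are reverse-engineered rather than derived: the passages in which $\mu_D^+$ emerges (``once the separation is weighted against a lower bound on the conformal size of the configuration'') and in which $\gamma_D^+$ emerges (``the distortion factors produced by the harmonic-measure estimates, which yields the leading $72R$'') assert the answer instead of producing it. In particular, the step you yourself flag as the main obstacle --- showing that $\mathcal{E}_R(D)$, defined through $P_{j,j}^D$ and $\omega(\infty, \Gamma_j, D)$, controls the geometry of \emph{every} intermediate domain $D_k$ uniformly in $k$ --- is asserted, not proved; and nothing in your sketch explains why only $\lambda_1$ and $\lambda_2$, rather than all of $\lambda_1, \ldots, \lambda_n$, enter the bound. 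As it stands, the proposal is a correct reading of the shape of the known bounds, not a proof of them.
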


\section{Some computations with harmonic functions}\label{sec:HARMONIC}

In order to apply the results of Section \ref{sec:BOUNDING}, it is necessary to first demonstrate the computability of certain fundamental operations on harmonic functions.  We begin with the computation of local conjugates.

\begin{theorem}[\bf Computable conjugation]\label{thm:CONJUGATE}
From a name of a harmonic function, $u$, and names of $z_0, R$ such that 
$\overline{D_R(z_0)} \subseteq \dom(u)$, it is possible to uniformly compute a name of a local 
harmonic conjugate of $u$ with domain $D_r(z_0)$.
\end{theorem}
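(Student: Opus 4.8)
The plan is to reduce the computation of a harmonic conjugate to the computation of a contour integral of the gradient of $u$, which is the classical construction of the conjugate via line integrals. On the simply connected disk $D_R(z_0)$, a harmonic conjugate exists and is unique up to an additive constant, so I would fix the normalization by setting $v(z_0) = 0$ and defining
\[
v(z) = \int_{z_0}^{z} \left( -\frac{\partial u}{\partial y}\, dx + \frac{\partial u}{\partial x}\, dy \right),
\]
where the integral is taken along any path in the disk from $z_0$ to $z$; path-independence follows from the fact that the integrand is closed precisely because $u$ satisfies the Laplace equation. The target is to show that from a name of $u$ one can uniformly compute a name of this $v$ on a slightly smaller disk $D_r(z_0)$ with $r < R$.

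First I would observe that from a name of the harmonic $u$ one can compute names of its first partial derivatives $\partial u/\partial x$ and $\partial u/\partial y$ on any compact subdisk. Although differentiation is not computable in general, it is computable for \emph{harmonic} (indeed analytic) functions on a strictly interior subdisk, because the derivatives can be recovered from the values of $u$ via the Poisson integral formula, or equivalently via a Cauchy-type integral representation of the analytic completion $u + iv$; this is the standard effective Cauchy estimate phenomenon. Concretely, I would pick a rational $r$ with $r < R$, express the needed derivatives at a point $z \in D_r(z_0)$ as averages of $u$ over a circle of radius $\rho$ with $r < \rho < R$, and use computability of integration of a computable function over a computable contour to get names of $\partial u/\partial x$ and $\partial u/\partial y$ uniformly in $z$.

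Next, to produce a name of $v(z)$ for a given name of $z \in D_r(z_0)$, I would integrate the one-form above along the straight segment from $z_0$ to $z$, parametrized as $z_0 + t(z - z_0)$ for $t \in [0,1]$. The integrand composed with this parametrization is a computable function of $(t, z)$ with values given by the already-computed partial derivatives, and integration of a computable real-valued function over $[0,1]$ is computable, uniformly in the parameter $z$. This yields a name of $v(z)$ from a name of $z$, which by the remarks in the preliminaries suffices to compute a name of the function $v$ with domain $D_r(z_0)$. Path-independence guarantees that $v$ so defined is in fact a harmonic conjugate of $u$, and by construction $v(z_0) = 0$.

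The main obstacle I expect is the first step: establishing \emph{uniform} computability of the partial derivatives of $u$ from a name of $u$ alone, without being handed the derivatives. One must be careful that the radii and the contour of integration used in the Cauchy/Poisson representation are chosen rationally and that the error bounds are explicit, so that the whole pipeline is genuinely uniform in both the name of $u$ and the name of $z$; the harmonicity of $u$ is exactly what makes this possible, in contrast to the general non-computability of differentiation noted earlier in the paper for merely continuously differentiable functions. Once the derivatives are in hand as computable objects, the remaining integration steps are routine applications of the computability of integration and of composition.
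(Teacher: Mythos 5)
Your proposal is correct, but it takes a genuinely different route from the paper. The paper never computes derivatives of $u$ at all: after translating $z_0$ to the origin it writes down the conjugate in one stroke via the conjugate Poisson kernel,
\[
\tilde{u}(z) = \int_0^{2\pi} u(re^{i\theta}) \frac{re^{-i \theta}z - re^{i \theta}\overline{z}}{|re^{i\theta} - z|^2}\, d\theta, \qquad |z| < r < R,
\]
which is an integral of $u$ itself against an explicit computable kernel, and is exactly the harmonic conjugate vanishing at $0$. You instead follow the classical constructive path: first recover $\partial u/\partial x$ and $\partial u/\partial y$ by differentiating the Poisson representation under the integral sign, then integrate the closed form $-u_y\,dx + u_x\,dy$ along segments, using convexity of the disk and harmonicity for path independence. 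Both are sound. Two remarks on yours. First, you correctly identify the derivative step as the crux and you must prove it exactly as you sketch (differentiated Poisson kernel); you could not instead cite the paper's Theorem \ref{thm:DIFFERENTIATION.HARMONIC}, since in the paper that theorem is \emph{deduced from} computable conjugation, so invoking it here would be circular. For the same reason your parenthetical alternative (a ``Cauchy-type integral representation of the analytic completion $u+iv$'') is not available: $v$ is the very object being constructed. Second, what the two approaches buy: the paper's formula is shorter and keeps its logical order (conjugation first, differentiation as a corollary), while your route is longer but yields computability of the partial derivatives of a harmonic function as a free byproduct, which is the content of the paper's subsequent theorem anyway.
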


\begin{proof}
We first translate $z_0$ to the origin.  Allow $u$ to also denote the resulting function.
Given $z \in D_R(0)$, we first compute $r$ such that $|z| < r < R$.  We can then compute
\[
\tilde{u}(z) = \int_0^{2\pi} u(re^{i\theta}) \frac{re^{-i \theta}z - re^{i \theta}\overline{z}}{|re^{i\theta} - z|^2} d\theta.
\]
Thus, $\tilde{u}$ is the harmonic conjugate of $u$ on $D_R(0)$ that vanishes at $0$.  (See, for example, page 178 of \cite{Gilman.Kra.Rodriguez.2007}.)  We now translate $0$ back to $z_0$.  
Allow $\tilde{u}$ to also denote the resulting function.  

We have shown that from names of $u, R, z_0, z$, we can compute 
$\tilde{u}(z)$. 
\end{proof}

By means of local conjugation, we can now show that differentiation of harmonic functions is a computable operator.

\begin{theorem}\label{thm:DIFFERENTIATION.HARMONIC}
From a name of a harmonic function, $u$, we may compute a name of $u'|_\C$.
\end{theorem}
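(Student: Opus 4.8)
The plan is to reduce the statement to Computable conjugation (Theorem~\ref{thm:CONJUGATE}) together with the fact that differentiation of a holomorphic function is a computable operation. The starting observation is that for a harmonic $u$ the relevant derivative is the holomorphic function
\[
u' = \frac{\partial u}{\partial x} - i\frac{\partial u}{\partial y} = 2\frac{\partial u}{\partial z},
\]
defined on $\dom(u) \cap \C$. Wherever $v$ is a harmonic conjugate of $u$, the Cauchy--Riemann equations (\ref{eqn:CR.1}), (\ref{eqn:CR.2}) give $u' = (u+iv)'$, so $u'$ agrees with the complex derivative of the holomorphic function $u+iv$. Since two local conjugates of $u$ differ by an additive constant, $u'$ does not depend on the choice of conjugate; hence $u'$ is a single, globally well-defined holomorphic function on $\dom(u)\cap\C$, even though $u$ need not possess a global harmonic conjugate.

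By the remark recorded in the preliminaries, that to name a function $\subseteq \XTC \rightarrow \XTC$ it suffices to compute $f(z)$ uniformly from a name of $z \in \dom(f)$, it is enough to show the following: given in addition a name of a point $z \in \dom(u') = \dom(u) \cap \C$, we can uniformly compute a name of $u'(z)$. First I would locate rational $z_0$ and $R$ with $z \in D_R(z_0)$ and $\overline{D_R(z_0)} \subseteq \dom(u)$. Next I would feed the name of $u$ together with names of $z_0, R$ to Theorem~\ref{thm:CONJUGATE} to obtain a name of a local harmonic conjugate $v$ of $u$ on a disk $D_r(z_0)$ with $z \in D_r(z_0)$; forming $u+iv$ then yields a name of a holomorphic $f$, defined near $z$, with $f' = u'$ there. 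Finally I would recover $f'(z)$ from the name of $f$ by the Cauchy integral
\[
f'(z) = \frac{1}{2\pi i}\oint_{|\zeta - z_0| = \rho}\frac{f(\zeta)}{(\zeta - z)^2}\, d\zeta,
\]
where $\rho$ is any rational with $|z - z_0| < \rho < r$; this contour integral is computable from a name of $f$ and names of $z, z_0, \rho$, and its value equals $u'(z)$.

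The hard part will be the first step, since a name of a function ``omits information about the domain'' and so, a priori, does not let us certify $\overline{D_R(z_0)} \subseteq \dom(u)$. I would resolve this using a name of $\dom(u)$ as an open subset of $\XTC$, which accompanies the data specifying the harmonic function $u$: reading the name of $z$ lets us semi-decide $z \in D_R(z_0)$, while the $\deltaopen$-name of $\dom(u)$ lets us enumerate basic open sets whose closures lie in $\dom(u)$, and by compactness of $\overline{D_R(z_0)}$ we will eventually find finitely many of these covering it, thereby certifying containment. Since $z \in \dom(u)$ and $\dom(u)$ is open, such a pair $(z_0, R)$ exists, so the search halts; and because $u'$ is independent of the choice of conjugate, any disk found produces the correct value. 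Once the first step is secured, the remaining two steps are direct invocations of Theorem~\ref{thm:CONJUGATE} and of computable contour integration, the correctness of the resulting approximations to $u'(z)$ following from the identity $u' = f'$ on $D_r(z_0)$.
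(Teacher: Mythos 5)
Your proposal is correct and takes essentially the same approach as the paper: locate a disk about $z$ whose closure lies in $\dom(u)$, invoke Theorem \ref{thm:CONJUGATE} to obtain a local harmonic conjugate $\tilde{u}$, and computably differentiate the analytic function $f = u + i\tilde{u}$, recovering $u'$ from $f'$ (the paper reads off $\frac{\partial u}{\partial x} = \Re(f')$ and $\frac{\partial u}{\partial y} = \Re(i f')$, while you make the differentiation step explicit via the Cauchy integral formula). Your extra care in certifying $\overline{D_R(z_0)} \subseteq \dom(u)$ addresses a point the paper passes over when it says to ``read these names until we find a rational rectangle whose closure is contained in $\dom(u)$,'' so it is a refinement of, not a departure from, the paper's argument.
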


\begin{proof}
Given names of $u$ and $z \in \dom(u) \cap \C$, we first read these names until we find
a rational rectangle $R$ that contains $z$ and whose closure is contained in $\dom(u)$.  This then allows us to compute the radius of a closed disk $D$ centered at $z$ that is contained in $R$.  By Theorem \ref{thm:CONJUGATE}, we can compute a harmonic 
conjugate of $u$ on the interior of $D$, $\tilde{u}$.  Let $f = u + i\tilde{u}$.  We can then 
compute $f'$.   By an elementary computation, $\frac{\partial u}{\partial x} = Re(f')$ and 
$\frac{\partial u}{\partial y} = Re(i f')$. 
\end{proof}

We now show that the operation of harmonic extension, which is used to expand the domain of a harmonic function, is computable.

\begin{theorem}[\bf Computable Harmonic Extension]\label{thm:HARMONIC.EXTENSION}
Given a name of a domain $D$, a name of a harmonic $u : \overline{D} \rightarrow \R$, and names of conformal $f_1, \ldots, f_n$ such that
\begin{itemize}
	\item $\partial \D \subseteq \dom(f_j)$, 
	\item $\gamma_j =_{df} f_j[\partial \D]$ is a boundary component of $D$ on which $u$ is zero, and
	\item $\gamma_1, \ldots, \gamma_n$ are distinct,
\end{itemize}
we can compute a neighborhood of $D \cup (\bigcup_j \gamma_j)$, $D'$, and a harmonic extension of $u|_D$ to $D'$.
\end{theorem}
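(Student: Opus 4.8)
The plan is to extend $u$ across each boundary curve $\gamma_j$ by means of the classical Schwarz reflection principle, transported to a neighborhood of $\partial\D$ via the conformal maps $f_j$, and then to patch these local extensions together with the original function. The key structural fact that makes this work is that $u$ vanishes on each $\gamma_j$: reflection across an analytic arc is available precisely when the harmonic function has constant (here, zero) boundary values there.

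First I would fix a single index $j$ and work near $\gamma_j$. The hypotheses give a conformal $f_j$ with $\partial\D\subseteq\dom(f_j)$ and $f_j[\partial\D]=\gamma_j$. Pulling back $u$ along $f_j$ produces $u\circ f_j$, which is harmonic on the intersection of $f_j^{-1}[D]$ with a one-sided neighborhood of $\partial\D$ and vanishes on $\partial\D$ itself; because $f_j$ is conformal and hence maps a genuine annular neighborhood $\{\,1-\epsilon<|w|<1\,\}$ biholomorphically onto a collar of $\gamma_j$ lying in $\overline{D}$, this pullback is harmonic on such an annulus and continuous up to $\partial\D$ with boundary value $0$. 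I would then apply Schwarz reflection in the unit circle: define the reflected function by $w\mapsto -(u\circ f_j)(1/\overline{w})$ on $\{\,1<|w|<(1-\epsilon)^{-1}\,\}$, which together with the original pullback and its zero boundary values yields a harmonic function $v_j$ on a full (two-sided) annular neighborhood $A_j$ of $\partial\D$. The whole point of the reflection is that $v_j$ is genuinely harmonic across $|w|=1$, not merely continuous. Pushing forward by $f_j$, i.e.\ setting the extension to be $v_j\circ f_j^{-1}$ on $f_j[A_j]$, gives a harmonic function on a full two-sided neighborhood $W_j$ of $\gamma_j$ that agrees with $u$ on the $D$-side of $\gamma_j$.

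Computationally, each ingredient is already available in the paper's toolkit. The map $f_j$ and its inverse on the relevant annulus are computable from the given name of $f_j$ (using the Extended Computable Open Mapping Theorem, Theorem \ref{thm:ECOMT}, to compute images and hence locate the collar); the reflection operation $w\mapsto 1/\overline{w}$ is trivially computable; and harmonicity is preserved under these operations so that the resulting $v_j\circ f_j^{-1}$ inherits a computable name. I would then take $D' = D\cup\bigcup_j W_j$ and observe that this is open and contains $D\cup(\bigcup_j\gamma_j)$; a name for $D'$ is computable as a union of computable open sets. To define the extension on all of $D'$ I would check that on each overlap $D\cap W_j$ the pushed-forward reflection agrees with $u$, which holds because on the $D$-side the reflection formula reproduces exactly the pullback $u\circ f_j$; since the $\gamma_j$ are distinct the collars $W_j$ can be shrunk so that no two meet away from $D$, so the pieces glue to a single well-defined function, and its name is assembled by dovetailing the names of the local pieces together with the name of $u$ on $D$.

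The main obstacle is controlling the \emph{size} of the two-sided neighborhood in a computable way: I must produce an explicit $\epsilon>0$ (equivalently, an explicit collar in the $w$-plane and its image under $f_j$) from the given names, so that $D'$ is a bona fide computable open set rather than merely known to exist. This requires locating, uniformly from the name of $f_j$, an annulus $\{\,1-\epsilon<|w|<1\,\}$ whose $f_j$-image lies inside $\overline{D}$ and on which $u\circ f_j$ is defined and harmonic — and simultaneously ensuring the reflected annulus $\{\,1<|w|<(1-\epsilon)^{-1}\,\}$ maps under $f_j$ into $\dom(f_j)$, which is where I would lean on the fact that $\partial\D$ is compactly contained in $\dom(f_j)$ so a uniform outer margin exists. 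The other technical care point is verifying that $v_j$ is harmonic across the circle rather than merely continuous; this is the content of the Schwarz reflection principle for harmonic functions with zero boundary data, and I would invoke it as a standard result, noting only that the zero boundary condition supplied by the hypothesis is exactly what it demands.
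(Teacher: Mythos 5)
Your proposal follows essentially the same route as the paper's own proof: pull back $u$ by each $f_j$, reflect harmonically across $\partial \D$ (using the zero boundary values), push forward by $f_j$, take $D'$ to be $D$ together with the $f_j$-images of small two-sided annular collars around $\partial\D$ computed from the names of the $f_j$ (the paper extracts the collar width via a rational-rectangle covering of $\exp^{-1}[\dom(f_j)]$, which is just your compactness argument made explicit), and invoke the Extended Computable Open Mapping Theorem plus a shrink-to-disjointness step to glue the pieces. One point in your favor: your reflection formula $-(u\circ f_j)(1/\overline{w})$ carries the minus sign required by the Schwarz reflection principle for vanishing boundary data, whereas the paper's displayed formula $v(z) = u f_j g f_j^{-1}(z)$ omits it (evidently a typo, since the reflection theorem it cites demands the sign change for harmonicity across the curve).
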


\begin{proof}
Fix $j \in \{1, \ldots, n\}$ for the moment.  From the name of $f_j$, we can 
compute a name of $\dom(f_j)$.  We can then compute a name of $\exp^{-1}[\dom(f_j)]$.  (See \emph{e.g.} Theorem 6.2.4.1 of \cite{Weihrauch.2000}.)
We can then compute a covering of the line segment from 
$-i\pi$ to $i\pi$ by rational rectangles $R_1, \ldots, R_k$ whose closures 
are contained in $\exp^{-1}[\dom(f_j)]$.  Let $r$ be the minimum distance 
from a vertical side of one of these rectangles to the $y$-axis.  Hence, $r$
is computable from $R_1, \ldots, R_k$.  It now follows that for each $-\pi \leq y \leq \pi$, each 
point on the line segment from $-r + iy$ to $r + iy$ is contained in 
$\exp^{-1}[\dom(f_j)]$.  We can now compute
a positive rational number $r_{0,j}$ such that $r_{0,j} < e^{-r}$.    
Let $A_j$ be the annulus centered at the origin and with inner radius $1 - r_{0,j}$ and 
outer radius $1 + r_{0,j}$.  Hence, $\overline{A_j} \subseteq \dom(f_j)$.   
Let $g$ be the reflection map for $\partial \D$.  \emph{i.e.}
\[
g(z) = \frac{1}{\overline{z}}.
\] 
Hence, $A_j$ is closed under reflection.   

Let
\[
D' = D\ \cup\ \bigcup_{j =1 }^n f_j[A_j].
\]
We can choose $r_{0,1}, \ldots, r_{0,n}$ so that  $f_1[A_1], \ldots, f_n[A_n]$ are 
pairwise disjoint.  
It follows from the Extended Computable Open Mapping Theorem (Theorem \ref{thm:ECOMT}) that $D'$ can be computed from the given data.  We define 
$v$ on $D'$ as follows.  Given $z \in D'$, if $z \in D$, then let 
$v(z) = u(z)$.  Otherwise, there exists unique $j$ such that 
$z \in f_j[A_j]$, and we let
\begin{eqnarray}
v(z) & = & uf_jgf_j^{-1}(z).\label{eqn:REFLECTION}
\end{eqnarray}  
It follows from Schwarz Reflection (see \emph{e.g.} Theorem 4.12 of \cite{Axler.Bourdon.Wade.2001}), that $v$ is harmonic on $D'$.
It only remains to show we can compute $v$ from the given data.
Here is how we do this.  Given a name $p$ of a $z \in D'$, we read 
$p$ until we find a subbasic neighborhood $R$ such that 
either $\overline{R} \subseteq D$ or 
$\overline{R} \subseteq f_j[A_j]$.  In the first case, we simply compute $u(z)$.  
In the second case, we can use (\ref{eqn:REFLECTION}).  
\end{proof}

In \cite{McNicholl.2011.a}, an iterative method for the solution of Dirichl\'et problems for Jordan domains is developed along with a closed form for their solution.  Based on these results and the results in \cite{McNicholl.2011.b} on computing boundary extensions of conformal maps (see also \cite{McNicholl.2010.1}), we now show that solving Dirichl\'et problems for Jordan domains is a computable operation.

\begin{theorem}[\bf Computable Solution of Dirichl\'et Problems]\label{thm:DIRICHLET}
Given a name of a bounded Jordan domain $D$ and a name of a continuous 
$f : \partial D \rightarrow \R$, 
it is possible to uniformly compute a name of $u_f$.  Furthermore, we can compute the continuous extension of 
this solution to $\overline{D}$.
\end{theorem}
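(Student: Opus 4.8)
The plan is to reduce everything to Dirichlét problems on $\D$, which are solved by Lemma \ref{lm:DIRICHLET.DISK}, transferring boundary data back and forth by conformal mapping; I first dispose of the simply connected case and then bootstrap to the general finitely connected case by an iterative scheme whose convergence is governed by the closed form of \cite{McNicholl.2011.a}. So suppose first that the complement of $D$ in $\XTC$ is connected, with single boundary curve $\Gamma$. Reading a point $a \in D$ from the name of $D$ and setting $T(z) = 1/(z-a)$, the image $\tilde{D} =_{df} T[D]$ is a non-degenerate $1$-connected domain containing $\infty$ but not $0$ (here we use that $D$ is bounded); names of $\tilde{D}$ and of $\partial \tilde{D}$ are computable by the Extended Computable Open and Closed Mapping Theorems. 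Theorem \ref{thm:1.CONNECTED} then produces the conformal map $f_{\tilde{D}}$ onto a circular domain $C_{\tilde{D}} = \XTC - \overline{D_\rho(c)}$, and post-composing with $w \mapsto \rho/(w-c)$ gives a conformal $\psi$ of $D$ onto $\D$ with inverse $\phi =_{df} \psi^{-1}$. Since harmonic functions pull back to harmonic functions under conformal maps, the solution is $u_f = U \circ \psi$, where $U$ is the solution on $\D$ of the Dirichlét problem with boundary data $f \circ \phi|_{\partial \D}$.

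Each ingredient is computable. A ULAC function for $\partial D = \Gamma$ is computable from the parameterization in the name of $D$: the required modulus is the minimum of $|\Gamma(s) - \Gamma(t)|$ over $|s - t| \geq \epsilon$, which is a computable positive real. With it, Theorem \ref{thm:BOUNDARY.EXTENSION} supplies a name of the extension of $\phi$ to $\overline{\D}$, whence $f \circ \phi|_{\partial \D}$ is computable; subdividing $\partial \D$ into arcs and applying Lemma \ref{lm:DIRICHLET.DISK} yields a name of $U$ and of its extension to $\overline{\D}$ off the arc endpoints. Inverting the open map $\phi$ gives a name of $\psi$ and of its extension to $\overline{D}$. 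Then $u_f = U \circ \psi$ is computable on $D$; and because $f \circ \phi$ is continuous across the arc endpoints, $u_f$ extends continuously to all of $\overline{D}$, agreeing with $f$ on $\partial D$, the extension being computed as the composite of the two boundary extensions.

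For $n$-connected $D$ I cover it by finitely many overlapping simply connected Jordan subdomains $G_1, \ldots, G_m$, obtained by slitting $D$ along polygonal arcs joining its boundary components (Lemma \ref{lm:POLY.ARC} and Theorem \ref{thm:ACCESS}), and run the Schwarz alternating procedure: starting from a computable harmonic $v_0$ matching the data, obtain $v_{k+1}$ from $v_k$ by re-solving on the piece $G_{(k \bmod m) + 1}$ the simply connected problem whose boundary data is $f$ on $\partial D \cap \partial G_i$ and $v_k$ on the remainder of $\partial G_i$. Every $v_k$ is uniformly computable by the simply connected case together with the harmonic-restriction and extension operations of this section (Theorems \ref{thm:CONJUGATE} and \ref{thm:HARMONIC.EXTENSION}). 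By the closed form of \cite{McNicholl.2011.a}, $\{v_k\}$ converges uniformly to $u_f$ at a computable rate, so for any $2^{-\ell}$ I can name an index $k$ with $\sup_D |v_k - u_f| < 2^{-\ell}$ and output $v_k$; this produces a name of $u_f$ and, using continuity of $f$, of its extension to $\overline{D}$.

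The crux is the last step: producing the slit decomposition uniformly from the name of $D$, and certifying a \emph{computable} modulus for the uniform convergence of the alternating procedure --- a name of $u_f$ requires that the index at which the iterates reach a prescribed tolerance be computable, not merely that convergence occur. The geometric contraction estimates underlying this rate are exactly the content of the closed form in \cite{McNicholl.2011.a}; what remains is to verify that each quantity entering them (the harmonic measures of the overlaps and the resulting contraction constants) is computable from the given data, which in turn reduces to the disk solver of Lemma \ref{lm:DIRICHLET.DISK} and the conformal-mapping results used above.
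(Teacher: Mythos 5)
Your overall route is the paper's: slit $D$ into simply connected Jordan pieces, transplant to $\D$ by boundary-extended conformal maps, solve disk problems with Lemma \ref{lm:DIRICHLET.DISK}, and drive an alternating iteration whose convergence comes from \cite{McNicholl.2011.a}. (Your simply connected base case, via an inversion and Theorem \ref{thm:1.CONNECTED}, is sound and is essentially what the paper uses implicitly.) But there is a genuine gap, and it sits exactly at the point you yourself flag as ``the crux'': you never establish a \emph{computable} contraction constant or modulus of convergence for your $m$-piece Schwarz alternating scheme. You assert that ``the closed form of \cite{McNicholl.2011.a}'' gives a computable rate, but that closed form is tied to a specific two-domain integral-equation formulation, not to a general alternating procedure over an arbitrary overlapping cover; and your final paragraph concedes that the computability of the harmonic-measure quantities entering the rate remains to be verified. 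In TTE this unverified step is the whole theorem: to output a name of $u_f$ you must, for each $\ell$, effectively certify an index $k$ with $\sup_D |v_k - u_f| < 2^{-\ell}$, and mere convergence---even geometric convergence with an unknown constant---does not permit this.

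The paper closes exactly this gap with an explicit construction. It uses precisely two slit domains $D_1 = D - \bigcup_j \tau_j$ and $D_2 = D - \bigcup_j \sigma_j$, where the $\sigma_j$'s and $\tau_j$'s are pairwise disjoint arcs joining consecutive boundary curves (so $D_1 \cup D_2 = D$), and writes the alternation as a fixed-point problem $u = F(u)$ for the operator
\[
F(v)(z) = h(z) + \frac{1}{(2\pi)^2}\int_{B_2} K(z,\zeta_1)\, v(\phi_2(\zeta_1))\, ds_{\zeta_1},
\]
in which $h$ and the kernel $K$ are themselves computable via Theorem \ref{thm:BOUNDARY.EXTENSION} and Lemma \ref{lm:DIRICHLET.DISK}. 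The decisive quantitative input is the number
\[
m = \max\left\{ \int_{B_2} P(\phi_2^{-1}(\zeta), \zeta_1)\, ds_{\zeta_1}\ :\ \zeta \in \sigma\right\},
\]
which is computable from the given data and satisfies $m < 2\pi$ because $\phi_2^{-1}(\zeta)$ stays boundedly away from $B_2$ as $\zeta$ ranges over $\sigma$; Lemma \ref{lm:CONTRACTION} then says $F$ contracts with factor $m/(2\pi)$, so the iterates $F^t(\mathbf{0})$ converge to $u_f|_{D_1}$ with a computable modulus, and symmetrically on $D_2$. If you want to salvage your version, you must prove the analogous effective bound for your scheme: a rational $q < 1$, computed from the data, dominating the relevant harmonic measures of the overlaps of your pieces $G_i$---which is in substance what the paper's explicit $m$ accomplishes.
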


\begin{proof}
\newcommand{\Harm}{\mbox{\it Bharm}}
Let $\gamma_1, \ldots, \gamma_n$ denote the boundary curves of $D$ with $\gamma_1$ 
being the outermost.

Let $\sigma_1, \ldots, \sigma_{n-1}$, $\tau_1, \ldots, \tau_{n-1}$ be pairwise disjoint arcs such that 
$D_1 =_{df} D - \bigcup \tau_j$ and $D_2 =_{df} D - \bigcup \sigma_j$ are simply connected.  The case when $D$ is bounded by four curves is illustrated in Figure \ref{fig:ARCS}.
\begin{center}
\begin{figure}[!h]
\resizebox{4.5in}{4.5in}{\includegraphics{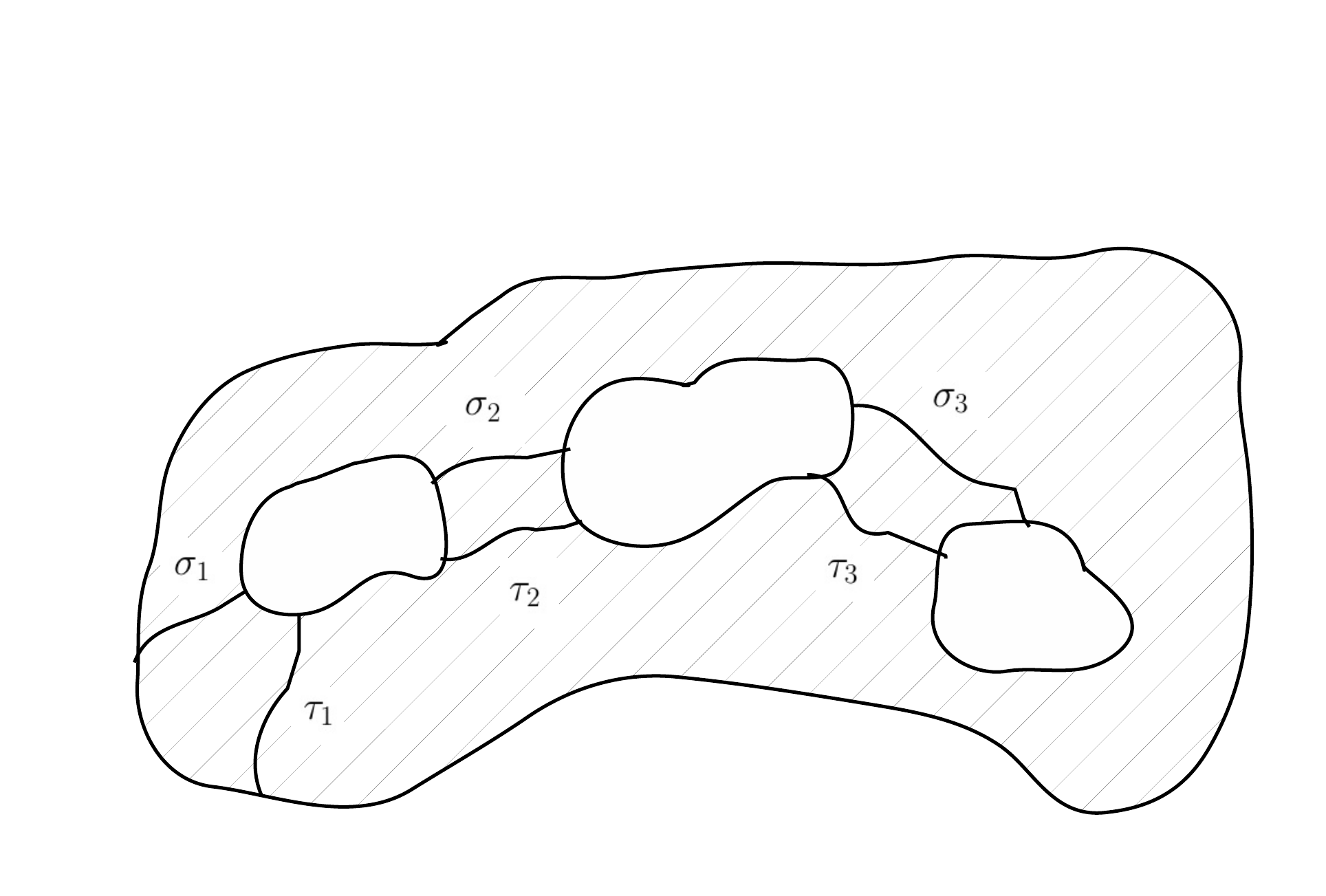}}
\caption{ }\label{fig:ARCS}
\end{figure}
\end{center}
More to the point, $\sigma_j$ and $\tau_j$ link $\gamma_j$ and $\gamma_{j+1}$ via $D$.  We also choose these arcs so that $z \not \in \sigma_j, \tau_j$.  Such arcs can be computed by repeated application of Theorem \ref{thm:COMPONENT}, Theorem \ref{thm:ACCESS}, and Lemma \ref{lm:POLY.ARC}.
Let $\sigma = \bigcup_j \sigma_j$, and let $\tau = \bigcup_j \tau_j$.

Let $\phi_j$ be a continuous map of $\overline{\D}$ onto $\overline{D_j}$ that is conformal on $\D$.  Thus, each point of $\overline{D} - \sigma$ has exactly one preimage under $\phi_1$, and each point of $\overline{D} - \tau$ has exactly one preimage under $\phi_2$.  The existence of these maps follows from Theorem 2.1 of \cite{Pommerenke.1992}.  It follows from Theorem \ref{thm:BOUNDARY.EXTENSION} that they can be computed from the given data.
Let: 
\begin{eqnarray*}
A_j & = & \phi_j^{-1}[\partial D]\\
B_1 &= & \phi_1^{-1}[\sigma]\\
B_2 & = & \phi_2^{-1}[\tau]
\end{eqnarray*}
Hence, $A_j$, $B_j$ can be computed from the given data.

Let $H_1$ be the harmonic function on $D_1$ determined by the boundary conditions 
\[
H_1(\zeta) = \left\{\begin{array}{cc}
						f(\zeta) & \mbox{if $\zeta \in \partial D$}\\
						0 & \mbox{if $\zeta \in \sigma - \partial D$}\\
						\end{array}
						\right. ,
\]
and let $H_2$ be the harmonic function on $D_2$ determined by the boundary conditions
\[
H_2(\zeta) = \left\{\begin{array}{cc}
						f(\zeta) & \mbox{if $\zeta \in \partial D$}\\
						0 & \mbox{if $\zeta \in \tau - \partial D$}\\
						\end{array}
						\right.
\]
Let $H_3$ be the harmonic function on $D_1$ determined by the boundary conditions
\[
H_3(\zeta) = \left\{\begin{array}{cc}
					H_2(\zeta) & \mbox{if $\zeta \in \sigma$}\\
					0 & \mbox{otherwise}\\
					\end{array}
					\right.
\]
Finally, let 
\begin{eqnarray}
h & = & H_1 + H_3. 
\end{eqnarray}
It follows from Theorem \ref{thm:BOUNDARY.EXTENSION} and Lemma \ref{lm:DIRICHLET.DISK} that $H_1$, $H_2$, $H_3$, and thus $h$ can 
be computed from the given data.  

For each $\zeta_1 \in B_2$, let $K(\cdot, \zeta_1)$ be the harmonic function on $D_1$ defined by the boundary conditions
\[
K(\zeta, \zeta_1) = \left\{ \begin{array}{cc}
								0 &  \zeta \not \in \sigma\\
								2\pi P(\phi_2^{-1}(\zeta), \zeta_1) & \zeta \in \sigma\\
								\end{array}
								\right.
\]
So, $K$ can also be computed from the given data.

Note that $\phi_2^{-1}(\zeta)$ is bounded away from $B_2$ as $\zeta$ ranges over $\sigma$.  Accordingly, let 
\begin{eqnarray*}
m & = & \max\left\{ \int_{B_2} P(\phi_2^{-1}(\zeta), \zeta_1) ds_{\zeta_1}\ :\ \zeta \in \sigma\right\}.
\end{eqnarray*} 
It follows that $m < 2\pi$ and that $m$ can be computed from the given data.

Let $\Harm(D_1)$ denote the space of bounded harmonic functions on $D$ with the sup norm. $\Harm(D_1)$ is complete.

We now define an operator on $\Harm(D_1)$.  When $v$ is harmonic on $D_1$, let $F(v)$ denote the function on $D_1$ defined by the equation
\[
F(v)(z) = h(z) + \frac{1}{(2\pi)^2} \int_{B_2} K(z, \zeta_1) v(\phi_2(\zeta_1)) ds_{\zeta_1}.
\]

The following is proven in \cite{McNicholl.2011.a}.

\begin{lemma}\label{lm:CONTRACTION}
$F$ is a contraction map on $\Harm(D_1)$.  In particular, for all $v_1, v_2 \in \Harm(D_1)$, 
\[
\parallel F(v_1) - F(v_2) \parallel_\infty \leq \frac{m}{2\pi} \parallel v_1 - v_2 \parallel_\infty.
\]
\end{lemma}

So, let $u$ be the fixed point of $F$.  It is shown in \cite{McNicholl.2011.a} 
that $u$ is the restriction of $u_f$ to $D_1$.  

\newcommand{\zero}{\mathbf 0}
Let $\zero$ denote the zero function on $D_1$.   Hence, 
$u_f(z) = \lim_{t \rightarrow \infty} F^t(\zero)(z)$.  Furthermore, we can compute a modulus 
of convergence for $\{F^t(\zero)(z)\}_t$.  Thus, $u_f(z)$ can be computed from the given data.

\end{proof}

We now obtain the following as immediate corollaries of Theorems \ref{thm:DIRICHLET} and \ref{thm:DIFFERENTIATION.HARMONIC}.

\begin{corollary}[\bf Computability of Harmonic Measure]\label{cor:OMEGA}
From a name of a Jordan domain $D$  it is possible to uniformly compute names of the corresponding harmonic measure functions.  Furthermore, we can compute their continuous extensions to $\overline{D}$.
\end{corollary}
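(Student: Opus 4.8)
The plan is to recognize each harmonic measure function as the solution of a particular Dirichlet problem and then quote Theorem \ref{thm:DIRICHLET}. By the definition of harmonic measure given above, $\omega(\cdot, \Gamma_j, D)$ is precisely $u_{f_j}$, the solution of the Dirichlet problem on $D$ whose boundary datum $f_j$ equals $1$ on $\Gamma_j$ and $0$ on every other boundary curve. Since the boundary components $\Gamma_1, \ldots, \Gamma_n$ are pairwise disjoint Jordan curves, $f_j$ is continuous on $\partial D$; in fact it is constant on each component. Thus for a bounded Jordan domain the corollary is immediate: we are given parameterizations $\gamma_1, \ldots, \gamma_n$ of the boundary curves, and composing $f_j$ with $\gamma_k$ yields the constant function $1$ (when $k = j$) or $0$ (when $k \neq j$) on $[0,1]$, each trivially computable. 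Feeding these boundary functions into Theorem \ref{thm:DIRICHLET} then produces a name of $\omega(\cdot, \Gamma_j, D)$ together with a name of its continuous extension to $\overline{D}$, and running this uniformly over $j \in \{1, \ldots, n\}$ gives all the harmonic measure functions at once.

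The one genuine wrinkle is that Theorem \ref{thm:DIRICHLET} is stated for \emph{bounded} Jordan domains, whereas a Jordan domain may contain $\infty$. In that case I would first reduce to the bounded setting. Using Theorem \ref{thm:WINDING} one computes a point $z_0 \in \Int(\gamma_k)$ for some $k$, hence $z_0 \notin \overline{D}$, and then applies the M\"obius transformation $T(z) = 1/(z - z_0)$, which sends $\infty$ to $0$ and carries $D$ onto a bounded Jordan domain $T[D]$ whose boundary curves are $T[\Gamma_1], \ldots, T[\Gamma_n]$. This image domain and its boundary parameterizations are computable by the Extended Computable Open Mapping Theorem (Theorem \ref{thm:ECOMT}). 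Because harmonic measure is a conformal invariant, $\omega(z, \Gamma_j, D) = \omega(T(z), T[\Gamma_j], T[D])$, so I solve the Dirichlet problem on the bounded domain $T[D]$ by the previous paragraph and pull the solution back through $T$.

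The main obstacle to watch is precisely this passage to the unbounded case: one must confirm that the pulled-back function is harmonic at $\infty$ in the sense defined earlier, via the behaviour of $z \mapsto \omega(1/z, \cdot, \cdot)$ near $0$, and that continuity of the boundary extension survives the M\"obius change of variables. Both follow from standard facts about conformal invariance of harmonicity and of angular limits, so no essential difficulty arises. Finally, where derivatives of these harmonic measure functions are later needed, for instance to form the normal derivatives entering the conjugate periods $P_{k,j}^D$, they are supplied by Theorem \ref{thm:DIFFERENTIATION.HARMONIC}; for the bare statement of the corollary, however, Theorem \ref{thm:DIRICHLET} alone carries the proof.
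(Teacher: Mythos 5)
Your proposal is correct and follows essentially the same route as the paper, which presents this result as an immediate corollary of Theorem \ref{thm:DIRICHLET}: the harmonic measure functions are exactly the solutions $u_{f_j}$ of the Dirichlet problems with boundary data $1$ on $\Gamma_j$ and $0$ on the other boundary curves, and these data are trivially computable. Your M\"obius reduction of the unbounded case to the bounded one (via conformal invariance of harmonic measure) is a detail the paper leaves implicit, and it is handled correctly.
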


\begin{corollary}[\bf Computability of the Riemann Matrix]
Given the same initial data as in Theorem \ref{cor:OMEGA}, it is possible to uniformly compute a name of each $P_{j,k}^D$.
\end{corollary}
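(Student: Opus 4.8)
The plan is to express each entry $P_{k,j}^D$ of the Riemann matrix as a line integral of a normal derivative, to move the contour off the boundary of $D$ so that no delicate boundary regularity is needed, and then to evaluate that integral using the subroutines established earlier in this section. Write $\omega_j =_{df} \omega(\cdot, \Gamma_j, D)$. By the Cauchy--Riemann relation $\frac{\partial \omega_j}{\partial \normal} = \frac{\partial \tilde\omega_j}{\partial s}$ for a local harmonic conjugate $\tilde\omega_j$, the conjugate period of $\omega_j$ around $\Gamma_k$ is, up to the fixed orientation convention,
\[
P_{k,j}^D \;=\; \oint_{\gamma} \frac{\partial \omega_j}{\partial \normal}\, ds ,
\]
where $\gamma$ is any positively oriented cycle in $D$ homologous to $\Gamma_k$. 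Since $\omega_j$ is harmonic on the \emph{open} domain $D$, the conjugate differential ${}^{*}d\omega_j = \omega_{j,x}\,dy - \omega_{j,y}\,dx$, whose integral over a curve is exactly $\int \frac{\partial \omega_j}{\partial \normal}\, ds$, is a closed $1$-form on $D$; hence its integral depends only on the homology class of $\gamma$. I would exploit this to replace $\Gamma_k$, which lies on $\partial D$ where derivatives of $\omega_j$ may blow up, by a piecewise-linear cycle $C_k$ lying in $D$.

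The first computational task is therefore to produce such a $C_k$. I would search, by an r.e.\ procedure, for a rational polygonal Jordan curve $C_k \subseteq D$ that separates $\Gamma_k$ from $\bigcup_{i \neq k} \Gamma_i$. Containment $\overline{C_k} \subseteq D$ is semidecidable from a name of $D$, while the separation condition ($\Gamma_k$ on one side of $C_k$ and every $\Gamma_i$ with $i \neq k$ on the other) is semidecidable using names of $\Int(C_k)$ and $\Ext(C_k)$ (computable by Theorem \ref{thm:WINDING}, or directly since $C_k$ is polygonal) together with the names of the boundary components $\Gamma_i$ as closed sets. That the search must succeed follows from elementary planar topology: there is a Jordan curve in $D$ separating $\Gamma_k$ from the remaining boundary components, and it can be approximated by a rational polygonal Jordan curve. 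This is precisely the situation witnessed by the nested exhausting Jordan domains $\Omega_j$ constructed in the proof of Theorem \ref{thm:DECOMP}, a boundary curve of a suitable $\Omega_j$ providing such a separator. I would then fix the orientation of $C_k$ to agree with that of $\Gamma_k$ by testing the winding number $\eta(C_k; z_0)$ at a point $z_0 \in \Int(\Gamma_k)$, reversing the parameterization if necessary, exactly as in the orientation discussion of the preliminaries.

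With $C_k$ in hand the integral is routine. I would compute $\omega_j$ by Corollary \ref{cor:OMEGA} and its planar derivative $\omega_j'|_\C$ by Theorem \ref{thm:DIFFERENTIATION.HARMONIC}, from which the partials $\omega_{j,x} = \Re(\omega_j')$ and $\omega_{j,y} = \Re(i\,\omega_j')$ are recovered. Parameterizing each linear segment of $C_k$ by a rational-coefficient affine map $t \mapsto (x(t), y(t))$ on $[0,1]$, the integrand on that segment is $\omega_{j,x}(C_k(t))\, y'(t) - \omega_{j,y}(C_k(t))\, x'(t)$, a computable real-valued function of $t$ (the finitely many vertices, where the normal is undefined, have measure zero and are harmless). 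Its integral over $[0,1]$ is computable, and summing over the finitely many segments of $C_k$ yields $P_{k,j}^D$. Doing this uniformly over all $j,k \in \{1,\ldots,n\}$ gives a name of the whole Riemann matrix.

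The main obstacle here is conceptual rather than arithmetic: one is tempted to integrate the normal derivative of $\omega_j$ directly along $\Gamma_k \subseteq \partial D$, but the derivatives of $\omega_j$ need not extend computably, or even continuously, to a non-smooth boundary curve. The decisive move is to invoke the closedness of the conjugate differential on the open domain $D$ to deform $\Gamma_k$ to an interior polygonal cycle $C_k$; once this deformation is justified and $C_k$ is produced by the semidecidable search above, every remaining ingredient is delivered by Corollary \ref{cor:OMEGA} and Theorem \ref{thm:DIFFERENTIATION.HARMONIC}, and computability of the period follows.
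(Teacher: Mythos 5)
Your proposal is correct and takes essentially the same route the paper intends: the paper presents this as an immediate consequence of Corollary \ref{cor:OMEGA} (computability of harmonic measure, via Theorem \ref{thm:DIRICHLET}) and Theorem \ref{thm:DIFFERENTIATION.HARMONIC}, namely compute each $\omega(\cdot,\Gamma_j,D)$ and its partial derivatives and then evaluate the conjugate period as the integral of the conjugate differential over a cycle interior to $D$ homologous to $\Gamma_k$, which is exactly your argument. The details you make explicit---homology invariance of the closed form $\omega_{j,x}\,dy-\omega_{j,y}\,dx$ and the semidecidable search for a rational polygonal separating cycle $C_k$ (note the separation test is semidecidable here not because containment of a closed set in an open set is semidecidable in general, but because each $\Gamma_i$ is connected and disjoint from any cycle lying in $D$, so $\Gamma_i\subseteq\Int(C_k)$ is equivalent to the semidecidable condition $\Gamma_i\cap\Int(C_k)\neq\emptyset$)---are precisely what the paper leaves implicit.
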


\begin{corollary}\label{cor:ER.COMP}
From a name of an unbounded Jordan domain $D$, and a name of an $R > 0$, we can compute a name of $\mathcal{E}_R(D)$.
\end{corollary}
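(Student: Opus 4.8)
The plan is to observe that $\mathcal{E}_R(D)$ is assembled from the diagonal periods $P_{1,1}^D,\ldots,P_{n,n}^D$, the harmonic measure values $\omega(\infty,\Gamma_1,D),\ldots,\omega(\infty,\Gamma_n,D)$, and the given $R$ by finitely many operations that are each computable on the reals: squaring, reciprocal, addition, multiplication, negation, exponentiation, and the minimum of finitely many reals. First I would read off from the name of $D$ the number $n$ of its boundary components together with names of the boundary curves $\Gamma_1,\ldots,\Gamma_n$. By the Computability of the Riemann Matrix corollary I can compute a name of each diagonal period $P_{j,j}^D$, and by the Computability of Harmonic Measure corollary (Corollary \ref{cor:OMEGA}) I can compute a name of each harmonic measure function $\omega(\cdot,\Gamma_j,D)$ along with its extension to $\overline{D}$.

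Next I would evaluate each harmonic measure at the point at infinity. Since $D$ is unbounded it contains $\infty$ as an interior point, and $\omega(\cdot,\Gamma_j,D)$ is harmonic there in the sense defined earlier; hence feeding a name of $\infty$ to the name of $\omega(\cdot,\Gamma_j,D)$ yields a name of the real number $\omega(\infty,\Gamma_j,D)$. By the strong maximum principle this value lies strictly between $0$ and $1$; in particular $\omega(\infty,\Gamma_j,D)>0$, so $\omega(\infty,\Gamma_j,D)^2$ is a strictly positive computable real whose reciprocal is computable. Combining this with the computable name of $P_{j,j}^D$, I can compute a name of $\frac{P_{j,j}^D}{\omega(\infty,\Gamma_j,D)^2}+1$, multiply by the computable real $R^2$, negate, and apply the exponential to obtain a name of the $j$-th term of the minimum defining $\mathcal{E}_R(D)$.

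Finally, since $n$ is known from the name of $D$ and the minimum of finitely many computable reals is computable, I would take the minimum over $j\in\{1,\ldots,n\}$ to produce a name of $\mathcal{E}_R(D)$. I do not expect a serious obstacle in this corollary, as the substantive work is carried by the preceding theorems that render the Riemann matrix and harmonic measure computable. The one point that genuinely needs care is that the division by $\omega(\infty,\Gamma_j,D)^2$ is computable only because the denominator is nonzero; this is precisely what the strict positivity of harmonic measure at the interior point $\infty$ furnishes, so no separate lower bound on $\omega(\infty,\Gamma_j,D)$ need be supplied as extra input data.
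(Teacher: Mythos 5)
Your proposal is correct and follows essentially the same route as the paper, which presents this statement as an immediate corollary of the computability of harmonic measure (Corollary \ref{cor:OMEGA}) and of the Riemann matrix, the ingredients of Definition \ref{def:ER}. Your elaboration of the details the paper leaves implicit --- evaluating $\omega(\cdot,\Gamma_j,D)$ at $\infty$, the strict positivity of the denominator via the maximum principle, and the computability of the finitely many arithmetic operations, $\exp$, and $\min$ --- is exactly the intended argument.
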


\section{Computably bounding the error in the Koebe construction}

The error bounds in Section \ref{sec:BOUND} rely on two quantities, $\mathcal{E}_R(D)$ and $\lambda_j(D)$.  The computability of the former has now been demonstrated.  To compute $\lambda_j(D)$, at least if one wishes to compute it by following its definition, requires the computability of conformal mapping onto circular domains for $(n-1)$-connected domains.  We are thus led to consider an inductive procedure.  The bounds in Section \ref{sec:BOUND} only apply when $D$ has at least three boundary components.  Thus, the starting point for this induction is the 2-connected case which leads us to the following theorem.

\begin{theorem}\label{thm:2.CONNECTED}
From a name of a 2-connected domain $D$ that contains $\infty$ but not $0$, and a name of $\partial D$, it is possible to uniformly compute names of $f_D$ and $\partial C_D$.  
\end{theorem}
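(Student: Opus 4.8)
The plan is to realize $f_D$ as the limit of the Koebe iterates $g_k$ and to convert the (already known) uniform convergence $g_k \to f_D$ into computability by supplying a \emph{computable} modulus of convergence. By Theorem~\ref{thm:KOEBE.COMP} we can already compute, from the initial data, names of $g_k$, $D_k$, and $\partial D_{k,j}$ for every $k$ and $j$. Hence the whole content of the theorem reduces to producing, from names of $D$, $\partial D$, and $n=2$ alone, a computable bound $\epsilon(k) \to 0$ with $\sup_{z \in D}|g_k(z) - f_D(z)| \le \epsilon(k)$.

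The main obstacle is precisely that the estimate of Theorem~\ref{thm:ERROR.KOEBE} is unavailable here: for $n = 2$ the construction of $\lambda_j(D)$ produces only the single circle $\mathcal{C}_1$, so $\min_{k_1 \neq k_2}\rho(\mathcal{C}_{k_1},\mathcal{C}_{k_2})$ ranges over the empty set and $\min\{\lambda_1,\lambda_2\}$ is undefined. I would therefore replace Theorem~\ref{thm:ERROR.KOEBE} by a direct geometric estimate for the two-component iteration. Here the maps $f_{k+1}$ straighten the two complementary components alternately: each $f_{k+1}$ is the $1$-connected Koebe map of the complement of the continuum $D_{k,k'}$ onto a disk complement (computable by Theorem~\ref{thm:1.CONNECTED}), so that $\partial D_{k+1,k'}$ is an \emph{exact} circle and only the other component is perturbed. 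A reflection/Schwarz-lemma estimate on the doubly connected region shows this perturbation contracts geometrically, with a ratio $\mu_D < 1$ and constant $\gamma_D$ depending only on an $R$ with $D_{2/R}(z_0)\subseteq \XTC - D \subseteq D_{R/2}(0)$ and on the conformal-module quantity $\mathcal{E}_R(D)$; unlike the $n \ge 3$ case no separation parameter $\lambda_j$ is needed, since with only two components the module controls everything.

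Granting such a bound, every ingredient is computable from the initial data: the components $D_{0,1},D_{0,2}$ and their boundaries come from Theorem~\ref{thm:DECOMP}, a suitable $R$ can be read from a name of $\partial D$, and $\mathcal{E}_R(D)$ is computable by Corollary~\ref{cor:ER.COMP} (applied to an unbounded Jordan domain squeezed between the exhausting curves $\Omega_j$ of Theorem~\ref{thm:DECOMP}, once one component has been straightened into a circle, so that the relevant harmonic measures and Riemann-matrix entries are accessible). This produces a computable $\epsilon(k)\to 0$; combined with the computability of the $g_k$ we obtain $f_D(z) = \lim_k g_k(z)$ with a computable modulus, hence a name of $f_D$ uniformly in a name of $z\in D$.

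Finally, to compute $\partial C_D$ I would track the \emph{straightened} boundaries. Since $D_{k,j} = g_k[D_{0,j}]$, the Hausdorff distance between $\partial D_{k,j}$ and the $j$th boundary circle of $C_D$ is at most $\sup_{z\in D}|g_k(z) - f_D(z)| \le \epsilon(k)$; and for the index $j = k'$ just straightened, $\partial D_{k,j}$ is an honest circle supplied by Theorem~\ref{thm:KOEBE.COMP}. Reading the two circular subsequences ($j=1$ along steps $k\equiv 1$, $j=2$ along steps $k\equiv 0 \pmod 2$) with the modulus $\epsilon$ lets us compute names of the two limiting circles, hence of $\partial C_D$; alternatively $C_D = f_D[D]$ is computable by the Extended Computable Open Mapping Theorem (Theorem~\ref{thm:ECOMT}). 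The crux, and the step I expect to require the real work, is establishing this $\lambda$-free geometric error bound for the two-component Koebe iteration and verifying that its constants are computable from $D$, $\partial D$, and $n$ alone.
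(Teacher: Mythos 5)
Your proposal has a genuine gap at its center: the ``$\lambda$-free geometric error bound for the two-component Koebe iteration'' is never proved, only asserted, and it is exactly the hard quantitative content the theorem needs. You correctly diagnose that Theorem~\ref{thm:ERROR.KOEBE} is vacuous for $n=2$ (the minimum defining $\lambda_j(D)$ ranges over an empty set of pairs), but what you put in its place --- a ``reflection/Schwarz-lemma estimate'' giving contraction ratio $\mu_D<1$ and constant $\gamma_D$ computable from $R$ and $\mathcal{E}_R(D)$ alone --- is precisely an $n=2$ analogue of Theorem~\ref{thm:ERROR.KOEBE}, i.e.\ a new Gaier-type theorem whose proof you explicitly defer (``the step I expect to require the real work''). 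Since everything else in your argument (computability of the iterates via Theorem~\ref{thm:KOEBE.COMP}, extraction of the limiting circles, Theorem~\ref{thm:ECOMT} for $C_D$) is routine scaffolding around that bound, deferring it means the proof is not there. A secondary soft spot: Corollary~\ref{cor:ER.COMP} and the harmonic-measure machinery of Section~\ref{sec:HARMONIC} apply to \emph{Jordan} domains, whereas the hypothesis only gives a general non-degenerate $2$-connected domain; one must first run finitely many computable conformal steps to replace $D$ by a domain with circular and analytic boundary curves. Your parenthetical about exhausting curves gestures at this but does not pin it down.

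The paper sidesteps the missing estimate entirely by abandoning the Koebe iteration for $n=2$. After two steps (computable by Theorem~\ref{thm:KOEBE.COMP}, using Theorem~\ref{thm:1.CONNECTED}) the domain $D_2$ is a Jordan domain one of whose boundary curves is a circle. The paper then switches to the \emph{Komatu} construction, which alternately maps the exterior of one boundary curve onto the exterior of the unit circle and the interior of the other onto the unit disk, converging to a conformal map $h$ onto an annulus $A=\{\mu^{-1}<|z|<1\}$; Henrici's Theorem 9.4 supplies the explicit a priori bound $|h_k(z)-h(z)|\leq 13(\mu^{-1})^{2k}$. Thus the only computability input needed is a number strictly between $\mu^{-1}$ and $1$, and this is where the harmonic-function results enter: by Gaier's identity $\mu=\exp(2\pi/D[\phi])$, where $\phi$ is the harmonic measure of the circular boundary component of the inverted domain and $D[\cdot]$ is the Dirichl\'et integral, Corollary~\ref{cor:OMEGA} and Theorem~\ref{thm:DIFFERENTIATION.HARMONIC} let one compute $N$ with $0<N<D[\phi]$, whence $\mu^{-1}<\exp(-2\pi/N)<1$. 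A M\"obius inversion then carries $A$ to an unbounded circular domain, and normalization yields $f_D$ and $\partial C_D$. To salvage your route you would have to prove your contraction estimate with explicitly computable constants --- essentially redoing Gaier's 1959 analysis for $n=2$ without reference to $C_D$ --- whereas the Komatu route gets the rate for free from the literature and reduces the whole problem to computing a single conformal invariant, the modulus.
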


\begin{proof}
There is a unique number $\mu$ such that $D$ is conformally equivalent to the annulus
\[
A =_{df} \{z \in \C\ :\ \mu^{-1} < |z| < 1\}.
\]
We use the \emph{Komatu construction} to conformally maps $D$ onto $A$ \cite{Henrici.1986}.   Again, we refer to the notation in Section \ref{sec:INTRO}.  Let $z_0$ and $r_0$ denote the center and radius respectively of $D_{2,2}$.  These can be computed from the given data.  Let $E$, $E_1$, and $E_2$ denote the image of $D_2$, $\partial D_{2,1}$, and $\partial D_{2,2}$ respectively under the map
\[
z \mapsto \frac{r_0}{z - z_0}.
\]
Hence, $E_2 = \partial \D$.  Let $\alpha$ be a conformal automorphism of the unit disk that exchanges $z_0$ and $0$.  Let $F_0$, $F_{0,1}$, and $F_{0,2}$ denote the images of $E$, $E_1$, and $E_2$ respectively under $\alpha$.  (Of course, $E_2 = \partial \D$.)  These can be computed from the given data.

Let $h_1$ be the conformal map of the exterior of $F_{0,1}$ onto the exterior of 
$\partial \D$.  Let $F_1$ be the image of $h_1$ on $F_0$, and let 
$F_{1,1}$ and $F_{1,2}$ denote the corresponding boundary curves.
Let $h_2$ be the conformal map of the interior of $F_{1,2}$ onto 
$\D$ such that $h_2(0) = 0$.  Call the image of $h_2$ on $F_1$ $F_2$ and 
let $F_{2,1}$ and $F_{2,3}$ be the corresponding boundary curves.  We now repeat
this process and obtain maps $h_3, h_4, \ldots$.

In \cite{Henrici.1986}, Theorem 9.4, it is shown that the sequence $\{h_k\}_{k \in \N}$ converges to a conformal map $h$ of $F_0$ onto $A$ and that 
\begin{equation}
|h_k(z) - h(z)| \leq 13 (\mu^{-1})^{2k}.\label{eqn:RING.EST}
\end{equation}

So, to compute $h$ from the given data, it suffices to compute a number between $\mu^{-1}$ and $1$ from the given data.  This is accomplished as follows.
Let $D[]$ denote the Dirichl\'et integral operator:
\[
D[f] = \int \int_{\overline{E}} \left[ \left( \frac{\partial f}{\partial x} \right)^2 + \left( \frac{\partial f}{\partial y} \right)^2 \right] dA
\]
where $dA$ is the differential of area.  Let $\phi(z) = \omega(z, E_2, E)$.  In \cite{Gaier.1974}, it is shown that $\mu = \exp(2\pi/ D[\phi])$.  It follows from the results in Section \ref{sec:HARMONIC}, that we can compute from the given data a number $N$ such that $0 < N < D[\phi]$.  
Thus, $\mu^{-1} < \exp(-2\pi/ N) < 1$.  

Set $z_2 = h(z_1)$.  Compute $R$ such that $\overline{D_R(z_2)} \subseteq A$.  Let $U$ denote the image of $A$ under
the map 
\[
z \mapsto \frac{R^2}{z - z_2}.
\]
It follows that $U$ is an unbounded circular domain that contains $\infty$ but not $0$.  Let 
\[
f(z) = \frac{R^2}{h\alpha\left( \frac{r_0}{f_2(z) - z_0}\right) - z_2}.
\]
Thus, $f$ maps $D$ onto $U$.  In addition, $f(\infty) = \infty$.  By normalizing, we obtain $f_D$.

It follows from Theorem \ref{thm:BOUNDARY.EXTENSION} that we can compute the continuous extension of $h_{j-1}$ to $\overline{F_j}$.  By continuity, the estimate \ref{eqn:RING.EST} holds for these extensions as well.  Thus, if $D$ is Jordan, and if we are also provided with names of the boundary curves of $D$, then we can compute the continuous extension of $f_D$ to $\overline{D}$.  
\end{proof}

By primitive recursion, we now obtain the following.

\begin{theorem}\label{thm:n.CONNECTED}
From a name of a finitely connected and non-degenerate domain $D$ that contains $\infty$, a name of $\partial D$, and the number of boundary components of $D$, it is possible to uniformly compute $f_D$ and $\partial C_D$.
\end{theorem}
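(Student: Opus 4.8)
The plan is to argue by primitive recursion on the number $n$ of boundary components of $D$, exactly as signaled before the statement. When $n = 1$ the conclusion is Theorem~\ref{thm:1.CONNECTED}, and when $n = 2$ it is Theorem~\ref{thm:2.CONNECTED}; these serve as the base of the recursion. (If $0 \in D$ one first translates: from the name of $\partial D$ one computes a point $p \in \partial D \subseteq \XTC - D$, which is finite since $\infty$ is interior to $D$, replaces $D$ by $D - p$, which still contains $\infty$ but now omits $0$, and at the end recovers $f_D(z) = f_{D-p}(z - p) + p$ and $\partial C_D = \partial C_{D - p} + p$. So I may assume $0 \notin D$.) The inductive hypothesis is that the theorem holds for every $(n-1)$-connected non-degenerate domain containing $\infty$ but not $0$; that is, from such a domain $E$ and a name of $\partial E$ one can uniformly compute names of $f_E$ and $\partial C_E$.

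For the inductive step I take $n \geq 3$ and assemble the two quantities on which the error bound of Theorem~\ref{thm:ERROR.KOEBE} depends. First, I compute a rational $R$ with $D_{2/R}(z_0) \subseteq \XTC - D \subseteq D_{R/2}(0)$ together with $\mathcal{E}_R = \mathcal{E}_R(D)$; the latter is available from Corollary~\ref{cor:ER.COMP}. Second, for each $j$ I compute $\lambda_j(D)$ by following Definition~\ref{def:LAMBDAj} literally. The only non-elementary ingredient there is the conformal map $f_{E_2}$ onto a circular domain, where $E_2$ is the $(n-1)$-connected domain obtained from $D$ by the reflection $g$ across $\Gamma_j$ and by discarding one boundary component. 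Since $E_2$ is $(n-1)$-connected, the inductive hypothesis supplies names of $f_{E_2}$ and $\partial C_{E_2}$; from these I compute $E_3$, the scaling constant $M$, the circles $\mathcal{C}_1, \ldots, \mathcal{C}_{n-1}$, and finally $\lambda_j(D) = \min_{k_1 \neq k_2} \rho(\mathcal{C}_{k_1}, \mathcal{C}_{k_2})$. With $\mathcal{E}_R$ and $\min_j \lambda_j(D)$ in hand, the constants $\gamma_D^+$ and $\mu_D^+$ of Theorem~\ref{thm:ERROR.KOEBE} are computed by direct arithmetic, and in particular $\mu_D^+ < 1$.

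It remains to turn the bound into a computation. By Theorem~\ref{thm:KOEBE.COMP} I compute the Koebe iterates $f_k$ and hence $g_k = f_k \circ \cdots \circ f_0$. Theorem~\ref{thm:ERROR.KOEBE} gives $|g_j - f_D| < \gamma_D^+ (\mu_D^+)^{\lfloor j/n \rfloor}$, and since $\mu_D^+ < 1$ this is a genuine computable modulus of convergence: to output $f_D(z)$ to within $2^{-m}$, I search for $j$ with $\gamma_D^+ (\mu_D^+)^{\lfloor j/n \rfloor} < 2^{-m-1}$ and then evaluate $g_j(z)$ to within $2^{-m-1}$. This yields a name of $f_D$. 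For $\partial C_D$, I use that the circularized boundary components $\partial D_{k,j}$ produced along the construction converge to the boundary circles of $C_D$, with the same error bound controlling the rate; alternatively, since $C_D = f_D[D]$, one obtains $C_D$ from the Extended Computable Open Mapping Theorem (Theorem~\ref{thm:ECOMT}) and extracts $\partial C_D$ from the complementary disks. All steps are uniform in the input data, which is what the recursion requires.

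The main obstacle is structural rather than computational: the error estimate of Theorem~\ref{thm:ERROR.KOEBE} is only available when $D$ has at least three boundary components, so the recursion cannot bottom out at $n = 1$ through that estimate alone --- this is precisely why the $2$-connected case, handled separately through the Komatu construction in Theorem~\ref{thm:2.CONNECTED}, is needed as a base. The delicate point is that evaluating $\lambda_j(D)$ for an $n$-connected domain legitimately requires solving the conformal-mapping problem for an $(n-1)$-connected domain, so one must verify that this invocation is exactly the inductive instance and that every intermediate quantity ($\mathcal{E}_R$, $\lambda_j$, $\gamma_D^+$, $\mu_D^+$, and the iterates $g_k$) is computed uniformly, so that the modulus of convergence furnished by Theorem~\ref{thm:ERROR.KOEBE} is itself uniform in the names of $D$ and of $\partial D$.
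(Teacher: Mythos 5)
Your proposal is correct and is essentially the paper's own argument: the paper's proof consists of the single phrase ``By primitive recursion,'' which refers precisely to the induction you spell out --- base cases $n=1,2$ from Theorems~\ref{thm:1.CONNECTED} and~\ref{thm:2.CONNECTED}, and an inductive step in which $\lambda_j(D)$ is computed via the inductive hypothesis applied to the $(n-1)$-connected domain $E_2$ of Definition~\ref{def:LAMBDAj}, $\mathcal{E}_R(D)$ via Corollary~\ref{cor:ER.COMP}, and the Koebe iterates of Theorem~\ref{thm:KOEBE.COMP} with the modulus of convergence supplied by Theorem~\ref{thm:ERROR.KOEBE}. Your elaboration of the uniformity requirements and of the reduction to the case $0 \notin D$ fills in details the paper leaves implicit, but the route is the same.
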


\section{Boundary extensions}

By applying Theorem \ref{thm:BOUNDARY.EXTENSION} to each iteration of the Koebe construction, we obtain the following.

\begin{corollary}\label{thm:BOUNDARY.EXT}
From a name of a finitely connected and non-degenerate domain $D$ with locally connected boundary, and the number of its boundary components, a name of $\partial D$, and a CIK function for $\partial D$, it is possible to compute a name of the continuous extension of 
$f^{-1}_D$ to $\overline{C_D}$.
\end{corollary}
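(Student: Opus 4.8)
The plan is to build the boundary extension of $f_D^{-1}$ by extending the inverse Koebe approximants one factor at a time. Recall from Section~\ref{sec:KOEBE} that $g_k = f_k \circ \cdots \circ f_0$ converges uniformly to $f_D$, so that $g_k^{-1} = f_0^{-1} \circ \cdots \circ f_k^{-1}$ converges to $f_D^{-1}$, and that by Theorem~\ref{thm:KOEBE.COMP} the factors $f_j$ together with the relevant boundary curves are computable from the initial data. Each $f_j$ is the conformal map of the $1$-connected domain $\XTC - D_{j-1,j'}$ onto the exterior of the disk $D_{j,j'}$; equivalently $f_j^{-1}$ is a conformal map of a $1$-connected circular region onto a $1$-connected domain, and I would treat it as such rather than as a map between the multiply connected $D_j$ and $D_{j-1}$. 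The only boundary across which $f_j^{-1}$ must be extended is the single circle $\partial D_{j,j'}$, since the remaining boundary components of $D_j$ lie in the interior of the domain of $f_j^{-1}$, where it is already analytic. The cases $n\le 2$ follow directly from the explicit constructions of Theorems~\ref{thm:1.CONNECTED} and~\ref{thm:2.CONNECTED}, so I assume $n\ge 3$.

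First I would normalize each factor to the hypotheses of Theorem~\ref{thm:BOUNDARY.EXTENSION}, which concerns conformal maps of $\D$ onto a bounded domain. I would pre-compose $f_j^{-1}$ with a M\"obius map carrying $\D$ onto the exterior of $D_{j,j'}$, and post-compose with a M\"obius map carrying the unbounded target $\XTC - D_{j-1,j'}$ onto a bounded domain by sending $\infty$ to a finite point. Theorem~\ref{thm:BOUNDARY.EXTENSION} then yields the continuous extension of the normalized map to $\overline{\D}$, hence of $f_j^{-1}$ across $\partial D_{j,j'}$, provided a modulus of local connectivity for the normalized target boundary is supplied. The given CIK function for $\partial D$ is what furnishes these moduli: it witnesses the local connectivity of $\partial D$ effectively, and I would track how it transforms under each conformal factor, so that at every stage the hypothesis of Theorem~\ref{thm:BOUNDARY.EXTENSION} is available for the transported boundary.

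Having extended each factor, I would compose: since the extension of a finite composition is the composition of the extensions and composition of computable continuous maps is computable, each $g_k^{-1}$ extends continuously to $\overline{D_k}$ and this extension is computable uniformly in $k$. Because each of the $n$ boundary components is re-circularized infinitely often as the index cycles, the cumulative extensions resolve every circle of $\partial C_D$ in the limit. To pass to the limit I would invoke the explicit bound $\|g_j - f_D\| < \gamma_D^+ (\mu_D^+)^{\lfloor j/n \rfloor}$ of Theorem~\ref{thm:ERROR.KOEBE}, whose constants are computable by the results of Section~\ref{sec:HARMONIC} together with the inductively available conformal maps for the lower-connectivity auxiliary domains; combined with standard distortion estimates for the inverse maps this should give a computable modulus of uniform convergence for $g_k^{-1} \to f_D^{-1}$ valid up to $\partial C_D$, from which the limiting extension is computed.

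The main obstacle will be exactly this last upgrade: Theorem~\ref{thm:ERROR.KOEBE} controls the approximants on the interior, whereas the corollary demands control of the inverse maps up to the boundary. Local connectivity of $\partial D$ guarantees, through Carath\'eodory's theorem, that each extension exists; but to make the limit effective one must propagate the CIK function through the conformal distortions, produce at each stage an explicit modulus governing how finely $\partial D$ is resolved after those distortions, and verify that the interior convergence rate genuinely forces a boundary-uniform rate for the inverses. Establishing this boundary-uniform convergence, rather than the routine factor-by-factor extension, is the crux of the argument.
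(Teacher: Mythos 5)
Your proposal is correct in approach and is essentially the paper's own: the paper's entire proof of this corollary is the single sentence ``By applying Theorem \ref{thm:BOUNDARY.EXTENSION} to each iteration of the Koebe construction, we obtain the following,'' which is precisely your factor-by-factor plan of normalizing each inverse Koebe factor by M\"obius maps, extending it via Theorem \ref{thm:BOUNDARY.EXTENSION}, composing, and passing to the limit. The issue you flag as the crux---effectively upgrading the interior convergence rate of Theorem \ref{thm:ERROR.KOEBE} to a boundary-uniform rate for the inverse maps, and propagating the CIK/ULAC data through the successive conformal distortions---is a genuine lacuna, but the paper offers no argument for it at all, so your attempt is strictly more detailed and more candid than the published proof.
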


\section{A reversal}

We have now demonstrated that in order to compute $f_D$ and $C_D$, it is sufficient to have a name of $D$, a name of its boundary, and the number of its boundary components.  We now show that a name of the boundary is not only sufficient but necessary.  We will use the following lemma from Hertling \cite{Hertling.paper.0}.

\begin{lemma}\label{lm:ONE.FOURTH}
Suppose $U$ is a proper, open, connected subset of $\C$, and $f$ is a conformal 
map on $U$.  Then, for all $z \in U$
\[
\frac{1}{4} |f'(z)| d(z, \C - U) \leq d(f(z), f(U)) \leq 4 |f'(z)| d(z, \C - U).
\]
\end{lemma}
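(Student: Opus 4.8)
The plan is to read this as the standard two-sided Koebe distortion estimate, in which the quantity $d(f(z), f(U))$ is to be understood as the distance from $f(z)$ to the complement $\C - f[U]$ (equivalently, to $\partial f[U]$); as literally written it would be $0$, since $f(z) \in f[U]$. Both inequalities then follow from a single tool, the Koebe one-quarter theorem: if $g$ is univalent on $\D$ with $g(0) = 0$ and $g'(0) = 1$, then $g[\D] \supseteq D_{1/4}(0)$. I would derive the lower bound by applying this to a normalization of $f$, and the upper bound by applying it to a normalization of the inverse map $f^{-1}$ (which is available because $f$ is conformal, hence univalent, and $f' $ is nonvanishing on $U$).

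For the lower bound, fix $z \in U$ and set $d = d(z, \C - U)$, so that $D_d(z) \subseteq U$. I would introduce the normalized map
\[
g(w) = \frac{f(z + dw) - f(z)}{d\, f'(z)}, \qquad w \in \D,
\]
which is univalent on $\D$ with $g(0) = 0$ and $g'(0) = 1$. The one-quarter theorem gives $g[\D] \supseteq D_{1/4}(0)$, and undoing the normalization shows
\[
f[D_d(z)] \supseteq D_{\frac{1}{4}|f'(z)| d}(f(z)).
\]
Since $f[D_d(z)] \subseteq f[U]$, the disk $D_{\frac{1}{4}|f'(z)| d}(f(z))$ avoids $\C - f[U]$, which is exactly the desired inequality $d(f(z), \C - f[U]) \geq \frac{1}{4}|f'(z)|\, d(z, \C - U)$.

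For the upper bound I would run the same argument for $f^{-1}$, which is conformal on the open set $f[U]$ with $(f^{-1})'(f(z)) = 1/f'(z)$. Setting $\rho = d(f(z), \C - f[U])$, so that $D_\rho(f(z)) \subseteq f[U]$, the one-quarter theorem applied to the normalization of $f^{-1}$ on $D_\rho(f(z))$ yields
\[
f^{-1}[D_\rho(f(z))] \supseteq D_{\frac{\rho}{4|f'(z)|}}(z).
\]
As the left side lies in $U$, this forces $d(z, \C - U) \geq \frac{\rho}{4|f'(z)|}$, that is $\rho \leq 4|f'(z)|\, d(z, \C - U)$, which is the upper bound.

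The two normalizations are routine bookkeeping, so the main obstacle is the edge case $f[U] = \C$: then $\C - f[U] = \emptyset$, the middle term is infinite, and the upper inequality would fail. I would dispose of this by showing a univalent map cannot carry a proper domain onto all of $\C$: if $f[U] = \C$, then $f^{-1}$ would be a nonconstant univalent entire function, hence affine, whence $U = f^{-1}[\C] = \C$, contradicting that $U$ is proper. Once this case is excluded every quantity above is finite, $f^{-1}$ is genuinely defined and conformal on $f[U]$, and both applications of the one-quarter theorem go through verbatim.
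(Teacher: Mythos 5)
Your reading of $d(f(z), f(U))$ as $d(f(z), \C - f[U])$ is the intended one (as literally written the middle quantity would be zero), and your two-sided Koebe one-quarter argument---normalizing $f$ on $D_d(z)$ for the lower bound, normalizing $f^{-1}$ on $D_\rho(f(z))$ for the upper bound, and excluding the degenerate case $f[U] = \C$ via Liouville-type rigidity---is correct and complete. The paper itself offers no proof of this lemma (it is quoted from Hertling), and your argument is essentially the standard proof of that cited result, so there is nothing to bridge.
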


Our first goal is to show that any neighborhood that hits the boundary of $D$
must hit it at a finite point (when $D$ is a non-degenerate, finitely connected domain).
We will accomplish this with a little point-set topology.  Recall that 
a point $c$ of
a connected space $K$ is said to be a {\it cut point} of $K$
if $K-\{c\}$ is not connected.

\begin{lemma}\label{lm:BOUNDARY} 
Suppose $Z$ is a connected space with no cut point.
Suppose $O'$ is a subset of $Z$ such that $O'$ and
$Z-O'$ contain at least two points. Then $\partial O'$ contains at least two points.
\end{lemma}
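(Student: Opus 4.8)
The plan is to argue by contradiction, organizing everything around the standard decomposition of $Z$ into the interior of $O'$, the boundary $\partial O'$, and the exterior. Write $A = \Int(O')$ and $B = Z - \overline{O'}$. These are disjoint subsets of $Z$, both open, and $Z = A \cup \partial O' \cup B$ since $\overline{O'} = A \cup \partial O'$ (a disjoint union) and $Z - \overline{O'} = B$. The assertion that $\partial O'$ is merely nonempty is the easy part: in a connected space the only clopen subsets are $\emptyset$ and $Z$, and $O'$ is a proper nonempty subset (proper because $Z - O'$ has at least two points, nonempty because $O'$ has at least two points), so $O'$ cannot be clopen and hence $\partial O' \neq \emptyset$. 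The real work is to exclude the possibility that $\partial O'$ consists of a single point.

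First I would prove the auxiliary fact that $A$ and $B$ are both nonempty whenever $\partial O'$ has at most one point; this is the only place the two-point hypotheses are used. Indeed, $O' \subseteq \overline{O'} = A \cup \partial O'$, and were $A$ empty we would get $O' \subseteq \partial O'$ and hence $|O'| \leq 1$, contradicting that $O'$ contains at least two points. Symmetrically, using $\partial(Z - O') = \partial O'$, we have $Z - O' \subseteq \overline{Z - O'} = B \cup \partial O'$, so if $B$ were empty then $|Z - O'| \leq 1$, again contrary to hypothesis. Thus under the assumption $|\partial O'| \leq 1$ both $A$ and $B$ are nonempty.

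Then I would assume toward a contradiction that $\partial O'$ has at most one point and split into two cases. If $\partial O' = \emptyset$, then $Z = A \cup B$ partitions $Z$ into two disjoint nonempty open sets, contradicting the connectedness of $Z$. If $\partial O' = \{c\}$, then $Z - \{c\} = A \cup B$ exhibits $Z - \{c\}$ as a disjoint union of two nonempty sets that are open in $Z$, hence open in the subspace $Z - \{c\}$; so $Z - \{c\}$ is disconnected, making $c$ a cut point of $Z$ and contradicting the no-cut-point hypothesis. Either case yields a contradiction, so $\partial O'$ contains at least two points. The conceptual heart, and the only step invoking the no-cut-point hypothesis, is this last case: recognizing that a solitary boundary point would, upon deletion, separate $Z$ into its interior and exterior pieces and so function as a cut point. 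The main thing to verify with care is precisely that $A$ and $B$ are genuinely open and that $A$, $\partial O'$, $B$ form a disjoint cover of $Z$, so that removing $\partial O'$ really does disconnect the space; these are routine facts about interior, closure, and boundary, but they are exactly what makes the cut-point contradiction go through.
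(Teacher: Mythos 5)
Your proof is correct and follows essentially the same route as the paper's: delete the lone boundary point $q$ and observe that $Z-\{q\}$ splits into the disjoint nonempty open sets $\Int(O')$ and $\Int(Z-O')$ (the paper writes these as $O'-\{q\}$ and $(Z-O')-\{q\}$, using $\partial O'=\partial(Z-O')$ for openness), so $q$ would be a cut point. Your write-up is somewhat more thorough, since you also treat the case $\partial O'=\emptyset$ explicitly via connectedness and verify nonemptiness of the two open pieces, details the paper leaves implicit.
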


\begin{proof} If $O'\subseteq
Z$ and $\partial O' =\{q\}$, then $Z-\{q\}=[O' - \{q\}]\cup [(Z-O')-\{q\}]$.
On the other hand, $\partial O' = \partial (Z - O')$. 
Hence, $O'$ and $[(Z-O')-\{q\}]$ are open, disjoint, and non-empty.  Thus, $q$ is a
cut point of $Z$- a contradiction.
\end{proof}

\begin{lemma}\label{lm:FINITE} Suppose $D$ is a non-degenerate $n$-connected domain and 
$O \subseteq \XTC$ is an open set such that $O \cap \partial D \neq \emptyset $. Then
there exists a point $p\in \C$ such that $p\in O \cap \partial D$.
\end{lemma}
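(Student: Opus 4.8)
The plan is to argue by contradiction and reduce everything to a single application of Lemma \ref{lm:BOUNDARY} at the point $\infty$. Since the only non-finite point of $\XTC$ is $\infty$, if the conclusion fails then $O \cap \partial D$ contains no finite point and, being nonempty, must equal $\{\infty\}$; in particular $\infty \in \partial D$, so $\infty \notin D$. (If instead $\infty \in D$, then $\infty \notin \partial D$ and every point of the nonempty set $O \cap \partial D$ is already finite, so there is nothing to prove.) First I would pass to a basic neighborhood of $\infty$: since the sets $\XTC - \overline{D_r(0)}$ form a neighborhood base at $\infty$ and $O$ is open, I can fix such a set $W \subseteq O$ with $\infty \in W$. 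Then $W \cap \partial D \subseteq O \cap \partial D = \{\infty\}$, and since $\infty \in W \cap \partial D$ we get $W \cap \partial D = \{\infty\}$.

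The key point is that $W$ is homeomorphic to an open disk and hence is a connected space with no cut point (deleting a point from an open disk leaves it connected), so Lemma \ref{lm:BOUNDARY} applies with $Z = W$. I would take $O' = W \cap D$. Because $W$ is open in $\XTC$ and $D$ is open, a routine computation gives that the boundary of $O'$ taken relative to $W$ is $\partial_W O' = W \cap \partial D = \{\infty\}$, a single point. Since $\infty \in \overline{D}$, the open set $O' = W \cap D$ is nonempty, hence infinite, so it has at least two points. The contrapositive of Lemma \ref{lm:BOUNDARY} then forces the other side, $W - O' = W - D$, to contain at most one point; as $\infty \in W - D$, this yields $W - D = \{\infty\}$, i.e. $W - \{\infty\} \subseteq D$.

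To finish I would invoke non-degeneracy. Let $K$ be the connected component of $\XTC - D$ that contains $\infty$. From $W - \{\infty\} \subseteq D$ we get $K \cap W = \{\infty\}$, so $\infty$ is an isolated point of $K$. But a connected space with more than one point has no isolated point, so $K = \{\infty\}$, and this contradicts non-degeneracy, which guarantees that every component of the complement of $D$ contains at least two points.

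I expect the main obstacle to be correctly localizing the argument: the natural global statement ``$\partial D$ has at least two points'' is not enough, since the second boundary point might lie outside $O$. The fix is to run Lemma \ref{lm:BOUNDARY} on the disk $W \subseteq O$ rather than on all of $\XTC$, after verifying both that $W$ has no cut point and that $\partial_W(W \cap D) = W \cap \partial D$. The remaining care is in the endgame, where one must convert ``$W - D$ is a single point'' into a genuine contradiction through the isolated-point/connectedness observation together with non-degeneracy.
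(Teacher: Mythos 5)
Your proof is correct, and while it uses the same two ingredients as the paper --- Lemma \ref{lm:BOUNDARY} together with non-degeneracy applied to the component of $\XTC - D$ containing $\infty$ --- it deploys them in a genuinely different way. The paper argues directly: it first uses non-degeneracy and connectedness to show that $K_\infty - \{\infty\}$ is unbounded in $\C$, and then applies Lemma \ref{lm:BOUNDARY} globally, in $Z = \XTC$, to the set $O' = O \cap K_\infty$ (after replacing $O$ by a basic set $\XTC - \overline{R}$), concluding that $\partial O'$ has at least two points and hence a finite one. You instead work by contradiction inside the disk $Z = W \subseteq O$, take $O' = W \cap D$ (the domain side rather than the complement-component side), compute the relative boundary $\partial_W O' = W \cap \partial D = \{\infty\}$, and run the contrapositive of Lemma \ref{lm:BOUNDARY} to force $W - D = \{\infty\}$; non-degeneracy enters only at the very end, to contradict the resulting isolation of $\infty$ in its complement component. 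Your localization buys something real: in the paper's global application, the second boundary point of $O \cap K_\infty$ could a priori lie on $\partial R$ rather than in $O$, and points of $\partial R$ need not belong to $\partial D$, so the paper's closing sentence ``the result then follows'' conceals exactly the step that your relative-boundary computation makes explicit. Closing that gap along the paper's lines would in effect require re-running Lemma \ref{lm:BOUNDARY} relative to $O$ --- which is essentially your argument --- so your version is not only valid but somewhat more complete than the paper's own sketch.
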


\begin{proof} Assume that $O\subseteq \XTC$
is open and $\infty \in (\partial D)\cap O$. We can assume 
$O = \XTC - \overline{R}$ for some rational rectangle $R$.  Let
$K_{\infty }$ be the component of $\XTC-D$ that contains
$\infty$.  By connectedness, $K_{\infty }-\{\infty \}$ is not bounded
in $\C$. Therefore, $\C\cap (O\cap K_{\infty
})$ contains at least two points. At the same time, $\C- (O\cap
K_{\infty })$ contains at least two points. The result then follows from 
Lemma \ref{lm:BOUNDARY}.
\end{proof}

\begin{theorem}\label{thm:REVERSAL}
From a name of a finitely connected, non-degenerate domain $D$ that contains $\infty$ but not $0$, the number of its boundary components, a name of $f_D$, and a name of $\partial C_D$, 
one may compute the boundary of $D$.
\end{theorem}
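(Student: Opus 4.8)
The plan is to produce a name of the closed set $\partial D$, \emph{i.e.}, to enumerate all basic open sets that meet $\partial D$. The essential difficulty is that a name of $D$ as an open set reveals nothing about its complement: from that name alone we can confirm that a rational rectangle $R$ meets $D$, but we cannot confirm that $R$ meets $\XTC - D$. It is precisely here that the data $f_D$ and $\partial C_D$ enter, by way of the Koebe distortion estimate of Lemma \ref{lm:ONE.FOURTH}.

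First I would reduce to bounded rational rectangles. Since $\infty \in D$ we have $\infty \notin \partial D$, and by Lemma \ref{lm:FINITE} every basic open set that meets $\partial D$ meets it at a finite point; hence it contains a bounded rational rectangle that also meets $\partial D$. Consequently it suffices to enumerate bounded rational rectangles $R$ with $R \cap \partial D \neq \emptyset$ and then to list every basic open set containing such an $R$. For a connected rectangle $R$ with $\overline{R} \subseteq \C$, meeting $\partial D$ is equivalent to $R \cap D \neq \emptyset$ together with $R \not\subseteq D$; the former is semidecidable from the name of $D$, so the whole task reduces to certifying, using $f_D$ and $\partial C_D$, that $R$ actually reaches the complement.

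The certification would run as follows. Applying Lemma \ref{lm:ONE.FOURTH} to the restriction of $f_D$ to $D \setminus \{\infty\}$ (a proper, open, connected subset of $\C$ on which $f_D$ is conformal, whose $\C$-complement is $\XTC - D$ and whose image-complement is $\partial C_D$), we obtain, for every finite $z \in D$,
\[
\frac{1}{4}|f_D'(z)|\, d(z,\partial D) \;\leq\; d(f_D(z),\partial C_D) \;\leq\; 4|f_D'(z)|\, d(z,\partial D).
\]
Here $f_D'$ is computable from the name of $f_D$, we have $f_D'(z) \neq 0$ since $f_D$ is conformal, and a convergent upper bound on $d(f_D(z),\partial C_D)$ is computable from the closed-set name of $\partial C_D$. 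The left inequality lets me compute an upper bound $\Delta(z)$ on $d(z,\partial D)$ that can be taken arbitrarily close to $4\,d(f_D(z),\partial C_D)/|f_D'(z)|$. My procedure searches over rational $z \in R$ confirmed to lie in $D$ and halts, declaring $R \cap \partial D \neq \emptyset$, as soon as it finds one with $\Delta(z) < d(z,\partial R)$: then the nearest point of $\partial D$ to $z$ lies within distance $\Delta(z) < d(z,\partial R)$ and so is inside $R$, making the declaration correct. Conversely, if $p \in R \cap \partial D$, then for $z \in D$ tending to $p$ the right inequality forces $\Delta(z) \to 0$ while $d(z,\partial R) \to d(p,\partial R) > 0$, so the halting condition is eventually met and the search succeeds. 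This renders $R \cap \partial D \neq \emptyset$ semidecidable and completes the enumeration.

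The main obstacle I expect is the bookkeeping around the two distance functions: one must verify that the upper bound on $d(f_D(z),\partial C_D)$ extracted from the closed-set name is tight enough to drive $\Delta(z)$ below $d(z,\partial R)$ in the limit $z \to p$, and one must be careful that Lemma \ref{lm:ONE.FOURTH}, a planar statement, is legitimately applied after deleting $\infty$ from $D$ — which is exactly what Lemma \ref{lm:FINITE} licenses by confining all boundary detection to finite points.
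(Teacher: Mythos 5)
Your proof is correct and follows essentially the same route as the paper's: both apply the distortion estimate of Lemma \ref{lm:ONE.FOURTH} at rational points of $D$, using the closed-set name of $\partial C_D$ to get upper bounds on the image-side distance and the computability of $f_D'$ to convert them into certified upper bounds on $d(\cdot,\partial D)$, and both invoke Lemma \ref{lm:FINITE} to confine boundary detection to finite points. The only cosmetic difference is that the paper enumerates certified closed disks $\overline{D_{R'}(w_0)}$ (phrasing the estimate via $(f_D^{-1})'$) and lists the basic sets containing them, whereas you fix a rational rectangle and semidecide that it meets $\partial D$; the underlying certificate is identical.
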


\begin{proof}
By the Extended Computable Open Mapping Theorem, we can compute a name of 
$C_D$ from these data.  It only remains to compute names of 
the boundary of $D$ and $C_D$.  To this end, suppose $R,R', w_0$ are such that 
\begin{eqnarray*}
R, R' & \in & \Q\\
w_0 & \in & D \cap \Q \times \Q\\
R' & \geq & \frac{4R}{|f_D'(w_0)|}\\
R & \geq & d(f_D(w_0), \XTC - C_D)\\
\end{eqnarray*}
The last inequality would be witnessed by the containment in 
$\overline{D_R(f_D(w_0))}$ of a basic neighborhood that hits $C_D$.  Let $z_1 = f_D(w_0)$.
It now follows from Lemma \ref{lm:ONE.FOURTH} that 
\begin{eqnarray}
\nonumber\frac{1}{4} |(f_D^{-1})'(z_1)| d(z_1, \XTC - C_D) & \leq & d(f_D^{-1}(z_1), \XTC - D)\\
& \leq & 4|(f_D^{-1})'(z_1)|d(z_1, \XTC - C_D)\label{eqn:REVERSAL}
\end{eqnarray}
It now follows that $D_{R'}(w_0)$ hits the boundary of $C_D$.  So, once we discover
such $R,R', w_0$, we can begin listing all basic neighborhoods that contain 
the closure of this disk as among those that hit the boundary of $C_D$.  
It follows from Lemma \ref{lm:FINITE} that every basic neighborhood that 
hits the boundary of $C_D$ contains a finite neighborhood that does.  
Since $z_1$ approaches the boundary of $C_D$ as $w_0$ approaches the 
boundary of $D$, it follows from (\ref{eqn:REVERSAL}) that 
there are arbitrarily small disks of the form $D_{R'}(w_0)$.  It now follows
that we can compute a name of the boundary of $C_D$.  It then similarly follows
that we can compute a name of the boundary of $D$.
\end{proof}

It is well-known that there is a computable $1$-connected domain whose boundary is
not computable.

\bibliographystyle{amsplain} 
\bibliography{/Users/tim/myfolders/Lamar/research/bibliographies/computability,/Users/tim/myfolders/Lamar/research/bibliographies/analysis}

\end{document}